%
%
\documentclass[11pt]{amsart}
\addtolength{\textwidth}{2.4cm}  
\addtolength{\evensidemargin}{-1cm} 
\addtolength{\oddsidemargin}{-1cm}
\usepackage{amsmath,amsthm,amsfonts,amssymb,verbatim,eucal, mathrsfs}
\usepackage{tikz-cd}
\usepackage{mathrsfs}
\usepackage[all]{xy}  
\usepackage{mathtools}
\usepackage[outdir=./]{epstopdf}

\usepackage{aliascnt}

 
\usepackage{hyperref}
\hypersetup{colorlinks=true,citecolor=blue!70!black,linkcolor=red!60!black,linktocpage=true}
 
 
\usepackage[capitalise,nameinlink]{cleveref}
\crefname{enumi}{part}{parts}
\crefname{prop}{Proposition}{Propositions}
\crefname{thm}{Theorem}{Theorems}
\Crefname{diagram}{Diagram}{Diagrams}
\creflabelformat{diagram}{#2(#1)#3}


\numberwithin{equation}{section}

\newtheorem{thm}[equation]{Theorem} 
\newtheorem{prop}[equation]{Proposition}
\newtheorem{lemma}[equation]{Lemma} 
\newtheorem{cor}[equation]{Corollary}
\newtheorem{example}[equation]{Example}
\newtheorem{remark}[equation]{Remark}
\newtheorem{definition}[equation]{Definition}

\DeclareRobustCommand\longtwoheadrightarrow{\relbar\joinrel\twoheadrightarrow}
\DeclareMathOperator{\Alt}{Alt}
\DeclareMathOperator{\Ext}{Ext}
\DeclareMathOperator{\gr}{gr} 
\DeclareMathOperator{\Ima}{Im}
\DeclareMathOperator{\ima}{Im}
\DeclareMathOperator{\Ker}{Ker}

\DeclareMathOperator{\Span}{Span}
                                              
\DeclareMathOperator{\op}{op}

\newcommand{\X}{_X}
\newcommand{\alphax}{\alpha_{\X}}
\newcommand{\betax}{\beta_{\X}}
\newcommand{\lambdax}{\lambda_{\X}}
\newcommand{\gammax}{\kappa} 
\newcommand{\etax}{\eta}

\newcommand{\bA}{\bar{\A}}

\newcommand{\A}{A}
\newcommand{\chabl}{\cH_{\lambda,\alpha,\beta}}
\newcommand{\chlab}{\cH_{\lambda,\alpha,\beta}}

\newcommand{\pr}{ {\rm{pr}} }
\newcommand{\igamma}{\gamma^{-1}}
\newcommand{\BBB}{B}
\newcommand{\sA}{A }

\newcommand{\R}{\mathcal{R}}
\newcommand{\RR}{\mathscr{R} }
\newcommand{\PP}{\mathscr{P}} 
\renewcommand{\P}{\mathcal{P}}

\newcommand{\LH}{{\rm{lh}}}
\newcommand{\hh}{\rm{hom}}

\newcommand{\sd}{_{\DOT}}
\newcommand{\B}{{\bf B}} 
\newcommand{\BB}{\mathbb{B}} 
\DeclareMathOperator{\AW}{AW}
\DeclareMathOperator{\EZ}{EZ}

\DeclareMathOperator{\AWt}{\AW^{\tau}}
\DeclareMathOperator{\EZt}{{\EZ^{\tau}}}
\newcommand*\xbar[1]{%
  \hbox{%
    \vbox{%
      \hrule height 0.5pt 
      \kern0.2ex
      \hbox{%
        \kern-0.0em
         \ensuremath{#1 }
        \kern-.1em
              }%
    }\hphantom{.}
  }%
}
 
\newcommand{\itau}{{\tau^{-1}}}
\newcommand{\s}{\sigma}
\newcommand{\ttp}{S \ot_{\tau} \Ho}
\newcommand{\ttpp}{(\ttp)}
\newcommand{\hooklongrightarrow}{\lhook\joinrel\longrightarrow} 

\newcommand{\DOT}{\setlength{\unitlength}{1.2pt}\begin{picture}(2.5,2)
               (1,1)\put(2.5,2.5){\circle*{2}}\end{picture}}

\newcommand{\Hom}{\mbox{\rm Hom\,}}

\newcommand{\F}{\mathcal F}

\renewcommand{\ker}{\mbox{\rm Ker\,}}

\newcommand{\ot}{\otimes}
\newcommand{\ott}{\otimes_{\tau}}

\newcommand{\Ho}{H}
\newcommand{\cH}{\mathcal{H}}

\newcommand{\HH}{{\rm HH}}

\begin{document} 
 \nocite{*} 

 \begin{abstract}
   We consider graded deformations and PBW deformations
 of algebras defined over noncommutative algebras.
We explain how fibers of graded 
deformations correspond to
filtered algebras admitting a PBW property, with focus on 
smash product algebras for Hopf algebras 
acting on quadratic algebras.
In particular, we describe all PBW deformations
arising from Hopf actions 
on Koszul algebras, giving Hopf-Koszul Hecke algebras.
Alexander-Whitney and Eilenberg-Zilber maps for twisted
tensor products 
transfer homological information between resolutions 
and convert conditions on Hochschild cocycles and Gerstenhaber brackets
into explicit PBW conditions. 
\end{abstract}

\title[Deformation theory and Hopf actions on Koszul algebras]
        {Deformation theory\\ and Hopf actions on Koszul algebras}

        \date{April 25, 2026} 
  

\thanks{Key words: 
deformation theory, smash product algebras,
  Hochschild cohomology, Hopf algebra, Koszul algebra, Hecke algebra} 

\author{A.\ V.\ Shepler}
\address{Department of Mathematics, University of North Texas,
Denton, TX 76201, USA}
\email{ashepler@unt.edu}
\author{S.\  Witherspoon}
\address{Department of Mathematics\\Texas A\&M University\\
College Station, TX 77843, USA}\email{sjw@tamu.edu} 

\maketitle
\setcounter{tocdepth}{1}
\tableofcontents 

\section{Introduction}\label{Intro}

Momentum in understanding
noncommutative algebras and their deformations is often thwarted by the difficulty of constructing explicit 
chain maps converting between resolutions.
Many noncommutative algebras arise as twisted tensor products,
including
skew group algebras, smash product algebras, universal enveloping
algebras, and Ore extensions. 
We developed technology in~\cite{TTP-AWEZ} for twisted tensor product algebras
generally, showing how to use twisted 
Alexander-Whitney and Eilenberg-Zilber maps to construct
chain maps between resolutions.
(See also~Guccione and Guccione ~\cite{GG}.)
In this paper, we take advantage of these maps to  
present some general results on graded deformations.
We use these maps to convert homological conditions for deformations
expressed on a bar resolution
into homological conditions on a chosen
twisted tensor product resolution tailored to the algebras.
As one consequence, we address a central question of
deformation theory for Hopf algebras acting on Koszul algebras:
which Hochschild 2-cocycles lift to PBW deformations?

Results on PBW properties for nonhomogeneous algebras
 often center on presenting algebras as quotients of free algebras
 over some field or commutative ring.
 Here, we consider instead the deformation theory for 
 algebras presented as quotients of free algebras over some
 {\em noncommutative algebra}.
 We connect PBW deformations in this more general setting
 to fibers of graded deformations;
 see \cref{PBWImpliesDeformation} and \cref{DeformationImpliesPBW},
 and for Hopf actions, \cref{PBW-iff-deformation}.
 Using this perspective, we convert general theoretical results
 on deformation theory
 (see \cref{hom-condns})
 into specific properties of the relations
defining the filtered algebra. 
As an application, we give concrete PBW conditions
for graded deformations of smash product algebras $S\# H$ arising from the action of
 a Hopf algebra $H$ on a Koszul algebra $S$ (see \cref{thm:main-Hopf-left}),
as well as describing which
Hochschild $2$-cocycles lift to such deformations
(see \cref{WhichCocyclesLift}).
We allow deformations of the Hopf algebra action itself in addition to 
the Koszul relations,
giving rise to Hopf-Koszul Hecke algebras.

Our deformations generalize the graded affine
Hecke algebras of Lusztig~\cite{Lusztig88,Lusztig89},
the Drinfeld Hecke algebras of Drinfeld~\cite{Drinfeld} (see also \cite{RamShepler}), the symplectic reflection algebras of 
Etingof and Ginzburg~\cite{EG}, rational Cherednik algebras, and deformations
of Kleinian singularities~\cite{CBH98}, all in the case of a finite group
acting on a polynomial ring.
A number of authors have studied actions of universal enveloping algebras
of Lie algebras or related quantum groups on polynomial rings or quantum
polynomial rings, see ~\cite{EGG05,GK07,KT10,Tsy} for example.
In previous work \cite{SW-Koszul}, we characterized PBW deformations of groups acting on polynomial rings and
Koszul algebras more generally for
which both the group action and the Koszul relations were
deformed.
Walton and the second author~\cite{WW} extended results to Hopf
algebras acting 
on Koszul algebras and studied PBW deformations for which only
the Koszul relations were deformed.
We generalize those results here, describing all possible 
(strong) PBW deformations, see
\cref{6ConditionsImplyWeakPBW}
and \cref{6ConditionsDoublyNoetherianCase}.

We mention other related recent papers.
Khare~\cite{Khare} gave results on
PBW deformations in the case of cocommutative algebras acting on polynomial rings;
for Hopf algebras, his results are a 
special case of ours. 
Flake and Sahi~\cite{Flake19,FlakeSahi} developed a theory of Hopf-Hecke algebras;
these are PBW deformations in the setting of Hopf algebras acting
on polynomial algebras, to which our results will apply. 
See also Letzter, Sahi, and Salmasian ~\cite[Section 8]{LSS} where similar techniques are used to
understand PBW deformations in the setting of twisted tensor product algebras
and actions of quantum universal enveloping algebras. 

To obtain our results here, we apply the 
theory in~\cite{TTP-AWEZ}   of chain maps for {\em twisted tensor products}
to smash products $S\# H$,
where $H$ is a Hopf algebra acting on an algebra $S$.
This allows us to convert between
the reduced bar resolution $\BB_{S\# H}$ on one hand
and a twisted product resolution $\BB_{S}\ott\,\BB_{\Ho} $
on the other hand, where $\tau$ is a suitable twisting map on these resolutions.
When $S$ is a Koszul algebra, it further allows converting to a twisted product
resolution $K_S\ot_{\tau}\BB_H$ where $K_S$ is the Koszul resolution of $S$.
Using these chain maps, necessary and sufficient homological conditions for a PBW deformation,
expressed in terms of the differential and Gerstenhaber bracket,
are translated into algebraic conditions on parameter functions giving
deformations.

\subsection*{Outline}
We begin by establishing
in \cref{sec:deformations} general results on
PBW deformations of algebras given as quotients of tensor algebras over
noncommutative algebras.
In \cref{sec:DoublyNoetherian}, we show how PBW deformations give rise
to graded deformations and appear as their fibers.
We also argue that a weaker PBW condition suffices
when working over finite dimensional algebras or, more generally,  doubly Noetherian
algebras.
We turn to filtered algebras generated by linear and quadratic relations
in \cref{sec:LinearQuadraticAlgs} and explain how 
graded deformations in turn have fibers that are PBW deformations.

In~\cref{sec:smashproducts} we specialize
to Hopf algebras and in \cref{sec:actions-on-Koszul}
to Hopf algebras acting on Koszul algebras.
We construct
twisted tensor product resolutions and chain maps
in ~\cref{sec:spr} (see also \cref{AppendixValuesLowDegree})
drawing on the general theory of~\cite{TTP-AWEZ}
in order to later convert homological information on
graded deformations into PBW conditions.
We relate PBW parameters to cochains in
\cref{sec:ParametersToCochains}
and then establish
homological conditions for PBW deformations
expressed on a twisted product resolution
in \cref{sec:ThreeFromPBW,sec:PBWFromThree}.
Lastly,
we derive  in~\cref{sec:DHAs} concrete
algebraic conditions for PBW deformations
as quotient algebras from the abstract homological data
given by deformation theory.
We also describe which Hochschild cocycles lift to graded deformations.

\subsection*{Notation and Conventions}
We take $k$ to be an arbitrary field 
whose characteristic is not~2, and 
all tensor products are taken over $k$, i.e.~$\otimes=\otimes_k$,
unless denoted otherwise.
We assume all algebras are associative $k$-algebras.
The identity element of $k$
is identified with that of any algebra $A$, i.e., $1_k=1_A$.
We assume all Hopf algebras have bijective antipode denoted $\gamma$
throughout.
We also assume every graded Koszul algebra $S$
is
finitely generated and connected, 
so $S$ is given as the quotient of a tensor 
algebra of a finite dimensional vector space in degree~$1$
by some quadratic relations.
We assume the action of any algebra on a graded algebra
preserves the grading.



\section{PBW deformations versus formal deformations}
\label{sec:deformations}

We begin by recalling some results on
deformation theory over rings and defining PBW deformations over
noncommutative algebras.

\subsection*{Deformations and Hochschild cohomology}
Recall that a deformation of an algebra $\sA$ over $k[t]$ is 
an algebra $\sA_t$ over $k[t]$
isomorphic to $\sA[t]\cong \sA\ot k[t]$ as a $k[t]$-module
with $$\sA_t/t\sA_t \ \cong\ \sA \qquad\text{ as a $k$-algebra.}
$$
The algebra $\sA_t|_{t=c}$ for some $c\in k$
is called a {\em fiber of the deformation} $\sA_t$.
If in addition $\sA$ and $\sA_t$ are graded algebras with $\deg t=1$
and
$\sA_t/ t\sA_t\cong \sA$ as graded algebras, 
then we say that $\sA_t$ is a {\em graded deformation}.
In that case, the fiber $\sA_t|_{t=c}$ is merely a filtered algebra.
Note that we
write $\sA_i$ for the $i$-th
graded component of a graded algebra $\sA$ 
and $\F_i(\BBB)$ for the $i$-th
filtered component of any filtered algebra $\BBB$.

Every deformation $\sA_t$ 
has $k[t]$-linear multiplication given by
$$
a\star a' = \mu_0(a\ot a') +
\mu_1(a\ot a')t + \mu_2(a\ot a')t^2 + \ldots 
\qquad\text{ for } a,a'\in \sA 
$$
with $\mu_0=m:\sA\ot \sA\rightarrow \sA$ the multiplication map on $\sA$
and some $k$-linear maps $\mu_i:\sA\ot \sA\rightarrow \sA$
(after identifying $\sA_t$ with $\sA[t]$ as a $k[t]$-module).
In this case,
$\sA_t\cong \sA[t]$
 as graded vector space
and the associated graded algebra of any fiber,
$\gr (\sA_t|_{t=c})$, is isomorphic to $\sA$ as a graded algebra.
(Recall that the 
{\em associated graded algebra}
of a filtered algebra $
B=\cup_{i\geq 0} \, \F_i(B)$
is
$ \gr B =\bigoplus_{i\geq 0} \F_i/ \F_{i-1}$
with 
product
$(x +\F_i)(y +\F_j)=xy  +
\F_{i+j-1}$ for
$\F_i=\F_i(B)$ and
$\F_{-1}=\{0\}$.)
We may assume without loss of generality that the multiplicative
identity $1_{\sA}$ of $\sA$ is retained as the multiplicative
identity on $\sA_t$.

Recall that if $\sA_t$ is a deformation of $\sA$,
then $\mu_1$ represents an element 
in the Hochschild cohomology
$\HH^2(\sA)=\HH^2(\sA,\sA)=\Ext^{2}_{\sA^e}(\sA,\sA)$
for $\sA^e=\sA\ot \sA^{\text{op}}$ (see \cite{GerstenhaberSchack}).
The Hochschild cohomology can be defined as the homology
arising from applying $\Hom_{\sA^e}(-, \sA)$
to the {\em bar resolution} $\B_{\sA}$ with $n$-th term
$\sA^{\ot (n+2)}$ giving augmented complex
$$
\begin{aligned}
  (\B_{\sA})\sd\ :\ \ 
  & \cdots \longrightarrow (\B_{\sA})_2 \longrightarrow (\B_{\sA})_1 \longrightarrow \sA\ot
  \sA
  \xrightarrow{\ \ m\ \ } 
\sA \longrightarrow 0 \qquad && \text{ (bar) }
\end{aligned}
$$
for $m$ the multiplication on $\sA$ with differential in homological
degree $n$ given by
\begin{equation}\label{eqn:bar-diff}
  a_0\ot\cdots \ot a_{n+1}
  \mapsto \sum_{i=0}^{n} (-1)^i  a_0\ot\cdots\ot a_{i-1}\ot a_ia_{i+1}\ot a_{i+2} \cdots
   \ot a_{n+1} \, .
\end{equation}
Note the {\em reduced bar resolution} $\BB_{\sA}$
has $n$-th term $\sA\ot \bar{\sA}^{\ot n} \ot \sA$
for $\bar \sA=\sA/k1_{\sA}$ giving augmented complex
$$
\begin{aligned}
  (\BB_{\sA})\sd\ :\ \ 
  & \cdots \longrightarrow (\BB_{\sA})_2 \longrightarrow (\BB_{\sA})_1 \longrightarrow \sA\ot
  \sA
  \xrightarrow{\ \ m\ \ } 
\sA \longrightarrow 0 \qquad && \text{ (reduced bar) }
\end{aligned}
$$
with differential induced by that for $\B_{\sA}$ after identifying
$\bar \sA$ with a vector subspace of $\sA$, so $\sA=k1_{\sA}\oplus \bar {\sA}$,
by choosing a section to the standard projection
$\sA\twoheadrightarrow \sA/k1_{\sA}$.

  Conversely, if a cocycle $\mu$ giving a cohomology class in $\HH^2(\sA)$
  is the first multiplication map $\mu_1$ for some deformation of $\sA$,
  then we say that $\mu$ {\em lifts} or {\em integrates}
  to a deformation.  One fundamental question of algebraic deformation
  theory
  centers on determining which cocycles lift to deformations.
Before giving a description of those deformations that lift to graded deformations
of crossed product algebras in the next section, 
we give some general results on fibers of deformations.

\subsection*{Filtered algebras over another algebra}

We consider generally algebras given in terms
of generators and relations, i.e., as quotients, but we work over a
possibly noncommutative
algebra instead of a field $k$.
We adapt ideas of
Braverman and Gaitsgory \cite{Braverman-Gaitsgory} on filtered
algebras
and of noncommutative Gr\"obner basis theory (see \cite{Li2011})
to our setting.

We fix throughout this section a $k$-algebra $H$ and an 
$H$-bimodule $W$ which will serve as a generating set of some filtered
algebra. We give the elements of $W$ degree $1$ and take
the tensor algebra $T_H(W)$ over $\Ho$ as a graded
$k$-algebra with multiplication given by $\ot_H$.
Thus $T_H(W)$ may be regarded as the algebra over $H$
generated by the $H$-bimodule $W$.

The algebra $T_H(W)$ is both filtered and graded
in the usual way with
$i$-th degree graded component $W^{\ot_{\!\Ho} i}$
(so $H$ is the degree $0$ component)
and $i$-th degree filtered component
$$\mathcal{F}_i(T_{\! _H}(W))=\oplus_{j=0}^i \ W^{\ot_{\!\Ho} j}\, .$$
For any (two-sided) ideal $I$ of $T_H(W)$, the quotient
algebra
$T_H(W)/I$ inherits the filtration with
$i$-th filtered component $\F_i= 
\F_i(T_H(W)/I)= (\F_i(T_H(W)) + I )/I$ as usual.

We fix a $k$-subspace
of {\em generating relations} $\PP\subset T_H(W)$
with $\PP\cap H = \{0\}$
and  consider the filtered algebra 
$$ \BBB=T_{\! _H}(W)/(\PP)\, .$$
We compare three graded versions of $\BBB$
(see Li~\cite{Li}), one using  a degree $1$ parameter $t$:
\begin{itemize}\itemindent=-19pt \itemsep=1ex
\item  $\gr \BBB$, 
      the standard associated graded algebra of $\BBB$,
\item $\BBB_{\hh}$ obtained by taking the leading homogeneous part of each generating
  relation,
  \item $\BBB_t$ obtained by 
    homogenizing each generating relation with respect to $t$.
        \end{itemize}
Specifically, each $x$ in $T_H(W)$ of {filtered degree} $n$
can be written as $\sum_{i=1}^n x_i$
for $x_i$ in $W^{\ot_{\!\Ho} i}$ and $x_n\neq 0$ and we set
$$\begin{aligned}
  \LH[x] &:=x_n &&\text{ in } W^{\ot_{\!\Ho} n},
&&\text{ the {\em leading homogeneous part} of $x$, and}\\
  x_t &:=\sum_{i=0}^n x_i\, t^{n-i} &&\text{ in } T_{\! _H}(W)[t],
  &&\text{ the {\em $k[t]$-homogenization}
of $x$.}
\end{aligned}
$$
Then $\LH[x]$ is homogeneous of degree $n$,
$x_t$ is 
homogeneous of degree $n$ for $t$ of degree $1$,
and $x_t$ is filtered of
degree $n$ for $t$ of degree $0$.
Set
\begin{equation}\label{LH}
  \begin{aligned}
  \RR&=\LH[\PP ] :=\{ \LH[x]: x\in \PP\},
 &&\quad\text{the {\em homogeneous version of $\PP$}, and } \\
 \PP_t&=\{ x_t: x\in \PP\},
 &&\quad\text{the {\em $k[t]$-homogenized version of $\PP$}. }
\end{aligned}
\end{equation}
We compare the associated graded algebra of
the filtered algebra $\BBB$,
$$
\hphantom{xxxx} \gr \BBB \ \ =\ \bigoplus_{n\geq 0} \F_n(\BBB)/\F_{n-1}(\BBB)
\qquad\quad \text{({\em associated graded of $\BBB$}),}
\hspace{10ex}\hphantom{x}
$$
to its {\em homogeneous version}
and  {\em $k[t]$-homogenized version} 
given respectively by 
\begin{equation}\label{homogeneousversion}
\begin{aligned}
  &\BBB_{\hh}
  &=&  \ \ T_{\! _H}(W)/(\RR) 
\qquad\text{\ \ \ \ \ \ (\em homogeneous version of  $\BBB$),}
\\
&\BBB_t
&=& \ \ T_{\! _H}(W)[t]/(\PP_t) 
\qquad\ \text{ ($k[t]$-{\em homogenized version of }$\BBB$)}
\, \end{aligned}
\end{equation}
which depend on the choice of generating relations $\PP$.
Note that $\sA=\BBB_{\hh}$ is a graded algebra, $\BBB_t$ is graded for $t$ in
degree $1$, and $\BBB_t$ is filtered for $t$ in degree $0$.

If $I$ is any (two-sided) ideal of $T_H(W)$,
then $(\LH[I])$ is a homogeneous ideal 
and the quotient $T_H(W)/(\LH[I])$ is a graded algebra.
The next lemma follows from~\cite[Theorem~1.6]{Li} 
by checking that the maps given there, as applied to our setting,
are in fact $H$-bimodule maps.
Note that the second statement follows from the first
since $( \LH[\PP] ) \subset ( \LH[(\PP)] )$.

\begin{lemma}
  \label{Epimorphism}
  For any ideal $I$ of $T_{\! _H}(W)$,
there is an isomorphism of $H$-bimodules and 
  graded algebras
  $$
  T_{\! _H}(W)/(\LH[I]) \ \xrightarrow{\ \ \cong \ \ } \
  \gr\big(T_{\! _H}(W)/I \big) 
  \,
  $$
  mapping
  $x+(\LH[I])$ to $x+I+\mathcal{F}_{n-1}$ 
  for $x$ filtered of degree $n$.
Thus for any subspace $\PP$ of $T_H(V)$, there is
an $H$-bimodule graded algebra epimorphism 
\begin{equation*}
\BBB_{\hh} 
\longrightarrow 
\gr \BBB
\qquad\qquad\text{ for }\quad \BBB=T_{\! _H}(W)/(\PP)
\, . 
\end{equation*}
\end{lemma}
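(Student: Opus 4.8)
The plan is to obtain the isomorphism directly from \cite[Theorem~1.6]{Li}, which establishes exactly such an identification for filtered algebras presented as quotients of tensor algebras, and then to verify the single new ingredient in our setting, namely that every map involved is a homomorphism of $H$-bimodules. The epimorphism in the second statement will then follow formally by comparing the two homogeneous ideals $(\LH[\PP])$ and $(\LH[(\PP)])$.

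First I would recall the natural candidate map. Each homogeneous element of degree $n$ in $T_{\! _H}(W)/(\LH[I])$ is represented by some $x\in W^{\ot_{\!\Ho} n}$, and we send its class $x+(\LH[I])$ to the symbol $x+I+\F_{n-1}$ in $\F_n(T_{\! _H}(W)/I)/\F_{n-1}$. Li's theorem supplies precisely that this assignment is well defined, graded, multiplicative, and bijective: well-definedness and injectivity amount to the statement that a homogeneous degree-$n$ element lying in $I+\F_{n-1}$ is the leading homogeneous part of an element of $I$ and hence lies in $(\LH[I])$, while surjectivity follows because any class in $\F_n/\F_{n-1}$ is the symbol of an element of filtered degree $n$, whose leading part is a preimage.

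The essential check I would add is $H$-bilinearity. Here the point is that $H$ sits in filtered degree $0$, so left or right multiplication by $h\in H$ preserves each filtered component and commutes with passage to the associated graded; likewise, since the grading and the leading-homogeneous-part operation $x\mapsto \LH[x]$ are built entirely from the tensor product $\ot_{\!\Ho}$ over $H$, and $H\ot_H W\ot_H H$ is identified with $W$, the map $\LH[-]$ is $H$-bilinear and each $W^{\ot_{\!\Ho} i}$ is an $H$-sub-bimodule. Consequently the ideal $(\LH[I])$ is an $H$-subbimodule, both source and target are $H$-bimodule graded algebras, and the candidate map is an $H$-bimodule homomorphism. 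I expect this to be the main point of the argument, though it is bookkeeping rather than a genuine difficulty: once one records that $H$ is concentrated in degree $0$ and every structure map factors through $\ot_{\!\Ho}$, equivariance is automatic.

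Finally, for the second statement I would specialize the isomorphism to $I=(\PP)$, giving $T_{\! _H}(W)/(\LH[(\PP)])\xrightarrow{\ \cong\ }\gr\BBB$ with $\BBB=T_{\! _H}(W)/(\PP)$. Since $\PP\subset(\PP)$ we have $\LH[\PP]\subset\LH[(\PP)]$ and therefore $(\RR)=(\LH[\PP])\subset(\LH[(\PP)])$; the resulting quotient map $\BBB_{\hh}=T_{\! _H}(W)/(\RR)\twoheadrightarrow T_{\! _H}(W)/(\LH[(\PP)])$ is a graded $H$-bimodule algebra surjection. Composing it with the isomorphism yields the desired $H$-bimodule graded algebra epimorphism $\BBB_{\hh}\twoheadrightarrow\gr\BBB$.
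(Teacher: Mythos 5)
Your proposal is correct and matches the paper's own argument: the paper likewise obtains the isomorphism from \cite[Theorem~1.6]{Li}, with the only new verification being that the maps there are $H$-bimodule homomorphisms, and it derives the epimorphism from the inclusion $(\LH[\PP])\subset(\LH[(\PP)])$ exactly as you do.
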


\subsection*{PBW deformations}
Braverman and Gaitsgory \cite{Braverman-Gaitsgory} studied conditions under which
there is not only an epimorphism
$\BBB_{\hh} \longrightarrow  
\gr \BBB $ but an isomorphism
in the setting of quotients of free algebras over a field
(i.e.~when $H$ is a field in our notation), 
addressing a question posed by
Bernstein (see Li~\cite{Li}). 
These ideas were extended to the setting of graded rings
by Li~\cite{Li}.
We distinguish between the case when the specific epimorphism of \cref{Epimorphism}
is an isomorphism
and the case of mere existence of some isomorphism.

\vspace{1ex}

\begin{definition}\label{defn:strongPBW}
{\em 
  We say the filtered algebra
$\BBB=T_{H}(W)/(\PP)$ is a {\em PBW deformation}
of its homogeneous version $\BBB_{\hh}=T_{H}(W)/(\RR)$
if its {associated graded algebra}  and homogeneous versions are isomorphic
as graded algebras,
$$
\gr(\BBB) \ \cong\ \BBB_{\hh} 
\, .
$$
We say 
$\BBB$ is a {\em strong PBW deformation}
of $\BBB_{\hh}$
if the canonical graded algebra epimorphism 
$
\BBB_{\hh} 
\longrightarrow
\gr \BBB 
$
of \cref{Epimorphism} is an isomorphism.
}
\end{definition}

\vspace{1ex}

We note that Li~\cite{Li} uses the term  {\em general PBW property}
instead of strong PBW deformation as we do here.   
Other authors use the term {\em PBW type}  or {\em PBW property} for what we call
here strong PBW deformation; 
see for example~\cite{Braverman-Gaitsgory,Flake19,FlakeSahi,Khare}.

\section{PBW deformations as fibers and doubly Noetherian algebras}
\label{sec:DoublyNoetherian}
We argue in this section that strong PBW deformations
are always fibers of graded deformations.  We consider the converse
in the setting of linear-quadratic filtered algebras in the next section.
We also show that the PBW condition is equivalent to the strong PBW
condition
when working over 
doubly Noetherian algebras.

\subsection*{Doubly Noetherian algebras}
We use the terminology of~\cite{YZ} for when an algebra $H$ has
Noetherian enveloping algebra:
\begin{definition}
{\em  An algebra $H$ 
is called {\em doubly Noetherian} if
$H^e=H\ot H^{\op}$ is Noetherian.
}
\end{definition}
All finite dimensional algebras are both Noetherian
and doubly Noetherian.
In general, note a Noetherian algebra need not be doubly Noetherian; 
e.g., see~\cite[Theorem 8.2]{Rogalski}.

Recall that if $\s$ is an automorphism of $H$ and $M$ is an
$H$-bimodule,
the {\em $\s$-twisted $H$-bimodule} { $\, ^{\s\!}M^{\s}$ is the set $M$ with
  $H$-bimodule structure given by
  $h\ot m\ot h'\mapsto \s(h)\, m\, \s(h')$.
\begin{lemma}\label{NoetherianImpliesHopfian}
  Suppose $\s$ is an automorphism of a doubly Noetherian algebra $H$
  and $M$ is a finitely generated $H$-bimodule.
  Then any surjective $H$-bimodule
  homomorphism $M\rightarrow \, ^{\s\!}M^{\s}$ 
  is bijective.  In particular, any surjective endomorphism of a finitely
  generated $H$-bimodule is an automorphism.
\end{lemma}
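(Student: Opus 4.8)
The plan is to reduce the statement to the classical fact that a surjective endomorphism of a Noetherian module is automatically injective, the only wrinkle being the $\s$-twist on the target. First I would recast everything in terms of the enveloping algebra $H^e=H\ot H^{\op}$: an $H$-bimodule is exactly an $H^e$-module, and since $H$ is doubly Noetherian, $H^e$ is Noetherian by hypothesis. As $M$ is finitely generated over $H^e$, it therefore satisfies the ascending chain condition on $H^e$-submodules.

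Next I would encode the twist as restriction of scalars. Because $\s$ is an algebra automorphism of $H$, it is also one of $H^{\op}$, so $\phi=\s\ot\s$ is an algebra automorphism of $H^e$; under the identification of bimodules with $H^e$-modules, the twisted bimodule $^{\s\!}M^{\s}$ is precisely $^{\phi}M$, i.e.\ $M$ with its $H^e$-action precomposed with $\phi$. Writing $g$ for the underlying $k$-linear function of the given surjection $f\colon M\rightarrow {}^{\s\!}M^{\s}$, the $H^e$-linearity of $f$ reads $g(a\cdot x)=\phi(a)\cdot g(x)$ for $a\in H^e$, $x\in M$.

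The purpose of this reformulation is that $\phi$ now lets me iterate $f$. For each $i\geq 0$ the same function $g$ underlies an $H^e$-linear map $^{\phi^{i}}f\colon {}^{\phi^{i}}M\rightarrow {}^{\phi^{i+1}}M$, so the composite $f^{(n)}:={}^{\phi^{n-1}}\!f\circ\cdots\circ {}^{\phi}f\circ f$ is an $H^e$-linear map $M\rightarrow {}^{\phi^{n}}M$ whose underlying function is $g^{n}$. Being $H^e$-linear, each $f^{(n)}$ has kernel $\Ker g^{n}$ that is a genuine $H^e$-submodule of $M$, and these assemble into an ascending chain $\Ker g\subseteq \Ker g^{2}\subseteq\cdots$ inside $M$.

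Finally I would run the standard argument. Since $f$, hence $g$, is surjective, every $g^{n}$ is surjective. The ACC forces the chain of kernels to stabilize, say $\Ker g^{n}=\Ker g^{n+1}$. Given $x\in\Ker g$, surjectivity of $g^{n}$ yields $x=g^{n}(y)$ for some $y$, whence $g^{n+1}(y)=g(x)=0$, so $y\in\Ker g^{n+1}=\Ker g^{n}$ and therefore $x=g^{n}(y)=0$. Thus $g$ is injective and $f$ is bijective; the final assertion is the case $\s=\id$. The main obstacle here is conceptual rather than computational: arranging the twist so that $f$ can be iterated as a self-map, which is exactly what realizing $^{\s\!}M^{\s}$ as restriction of scalars along the automorphism $\phi$ of $H^e$ accomplishes. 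Once that identification is in place, the Noetherian chain argument is entirely routine.
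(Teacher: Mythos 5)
Your proof is correct and follows essentially the same route as the paper: consider the ascending chain of kernels of iterates of the surjection, let it stabilize by the Noetherian hypothesis on $H^e$, and then use surjectivity to force injectivity. In fact, your restriction-of-scalars bookkeeping via the automorphism $\phi=\s\ot\s$ of $H^e$ makes precise a point the paper passes over quickly — namely why each $\Ker g^{n}$ is a genuine sub-bimodule of $M$ even though the $n$-th iterate is only a bimodule map from $M$ to a twisted copy ${}^{\phi^{n}}M$ rather than an endomorphism — so your write-up is, if anything, slightly more careful than the paper's.
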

\begin{proof}
Let
  $f^n:M\rightarrow \, M$ be the $n$-fold
  composition of the surjective map
  $f:M\rightarrow \, ^{\s \!}M^{\s}$ regarded as an additive group homomorphism.
 Then $\ker f\subset \ker  
f^2\subset \ker f^3\subset \ldots$ is a sequence of $H$-subbimodules of $M$.
Since $H$ is doubly Noetherian and $M$ is finitely generated, 
$M$ is a Noetherian $H$-bimodule
(see~\cite[Corollary~1.4]{GoodearlWarfield}).
Hence
$\ker f^n = \ker f^{n+1}$ for some $n$.
For $x$ in $\ker f$, since $f$ is surjective,
there is some $x_2$ in $M$ with $x=f(x_2)$.
Note then that $x_2$ lies in $\ker f^2$.
Continuing, $x_2=f(x_3)$ for some $x_3$ in $M$ and $x_3$ lies in $\ker f^3$.
 Inductively, $x=f^{n}(x_{n+1})$ for some $x_{n+1}$ in $\ker f^{n+1}=\ker f^{n}$
 so $0=f^{n}(x_{n+1})=x$ and $f$ is injective.
 For the last statement, we just take $\s$ to be the identity map.
  \end{proof}

 \begin{remark}\label{rk:khare}
   {\em 
     Note that some hypothesis on $H$ in the last lemma is required.
The conclusion is false in general. 
See e.g.~\cite{Shepherdson}, \cite{Montgomery1983}, and
      \cite[Section~1, Exercise 18]{Lam1999}.
    }
    \end{remark}

 \subsection*{PBW deformations over doubly Noetherian algebras}
 As in the last section, we fix an algebra $H$ and $H$-bimodule $W$
 and consider a $k$-vector space 
 $\PP\subset T_{H}(W)$ of generating relations
 with homogeneous version $\RR=\LH[\PP]$,
    the set of leading homogeneous parts of the elements of $\PP$
    (see \cref{LH}).
    Then $\BBB=T_{H}(W)/(\PP)$ is a filtered algebra and
    its homogeneous version $\BBB_{\hh}=T_{H}(W)/(\RR)$
    is a graded algebra.

\begin{prop}\label{PBWImpliesLH}
The filtered algebra $\BBB$ 
is a strong PBW deformation
of  its homogeneous version $\BBB_{\hh}$ if and only if
$(\LH[\PP])=(\LH[(\PP)])$.
\end{prop}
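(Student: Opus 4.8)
The plan is to realize the canonical epimorphism of \cref{Epimorphism} as a composite of a surjection of quotient algebras followed by an isomorphism, and then to observe that such a composite is an isomorphism exactly when the two ideals defining the surjection coincide. This reduces the statement to an elementary fact about quotients of $T_{\! _H}(W)$ by nested ideals.

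First I would record the two relevant descriptions. By definition $\BBB_{\hh}=T_{\! _H}(W)/(\LH[\PP])$, while the first statement of \cref{Epimorphism}, applied to the ideal $I=(\PP)$, gives an $H$-bimodule graded algebra isomorphism $T_{\! _H}(W)/(\LH[(\PP)])\xrightarrow{\ \cong\ }\gr\BBB$. Since $\LH[\PP]\subset\LH[(\PP)]$ yields the inclusion of ideals $(\LH[\PP])\subset(\LH[(\PP)])$, there is a natural graded algebra projection
$$
\pi\colon\ T_{\! _H}(W)/(\LH[\PP])\ \longrightarrow\ T_{\! _H}(W)/(\LH[(\PP)]).
$$
The key point is that the canonical epimorphism $\BBB_{\hh}\to\gr\BBB$ is precisely the composite of $\pi$ with this isomorphism; indeed, this is exactly how the second statement of \cref{Epimorphism} was deduced from the first.

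Next I would conclude. Because the second factor is an isomorphism, the composite $\BBB_{\hh}\to\gr\BBB$ is bijective if and only if $\pi$ is. A surjection $T_{\! _H}(W)/J_1\to T_{\! _H}(W)/J_2$ induced by an inclusion $J_1\subset J_2$ of ideals has kernel $J_2/J_1$, so it is injective if and only if $J_2\subset J_1$, hence bijective if and only if $J_1=J_2$. Taking $J_1=(\LH[\PP])$ and $J_2=(\LH[(\PP)])$, and noting that a bijective graded algebra homomorphism is automatically a graded algebra isomorphism, gives the claimed equivalence between the strong PBW condition and the equality $(\LH[\PP])=(\LH[(\PP)])$.

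The one point requiring care, and the only real obstacle, is verifying that the canonical map of \cref{Epimorphism} is literally this composite and not merely some isomorphism onto $\gr\BBB$ landing on the wrong ideal. I expect this to be settled by tracing the map on representatives: following $x+(\LH[\PP])\mapsto x+(\LH[(\PP)])\mapsto x+(\PP)+\F_{n-1}$ for $x$ filtered of degree $n$ recovers the explicit formula in \cref{Epimorphism}, confirming that $\pi$ is indeed the first factor.
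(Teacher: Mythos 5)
Your proposal is correct and takes essentially the same route as the paper: the paper's proof likewise factors the canonical epimorphism as the projection $T_{\! _H}(W)/(\LH[\PP])\to T_{\! _H}(W)/(\LH[(\PP)])$ followed by the isomorphism of \cref{Epimorphism}, and concludes that it is bijective exactly when the two ideals coincide. You merely spell out the details (the kernel computation for $\pi$ and the check on representatives) that the paper leaves implicit.
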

\begin{proof}
  Following the argument for \cite[Proposition 1.7]{Li},
  we observe that the
  surjective $H$-bimodule 
  graded algebra map of
  \cref{Epimorphism},
 \begin{equation*}
  \BBB_{\hh}=
T_{\! _H}(W)/ (\LH[\PP])
\longrightarrow
T_{\! _H}(W)/ (\LH[(\PP)])
\ \cong\  \gr\big(T_{\! _H}(W)/(\PP) \big)
=\gr \BBB 
\, ,
\end{equation*}
is an isomorphism 
exactly when
$(\LH[\PP])=(\LH[(\PP)])$.
\end{proof}

\begin{prop}\label{prop:doubly-PBW}
  Suppose $H$ is doubly Noetherian
 and
$\BBB_{\hh}$ in each graded degree
is finitely generated as an $H$-bimodule.
Then $\BBB$ is a PBW deformation of
its
homogeneous version $\BBB_{\hh}$ if and only if it is a strong PBW deformation
of $\BBB_{\hh}$.   
\end{prop}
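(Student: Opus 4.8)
The forward direction is immediate from the definitions: a strong PBW deformation is one for which the canonical epimorphism of \cref{Epimorphism} is an isomorphism, and such an isomorphism in particular witnesses $\gr(\BBB)\cong\BBB_{\hh}$, so $\BBB$ is a PBW deformation. The content is the converse, and the plan is to upgrade the mere \emph{existence} of a graded algebra isomorphism $\gr(\BBB)\cong\BBB_{\hh}$ to the statement that the \emph{canonical} map is an isomorphism, using \cref{NoetherianImpliesHopfian} one graded degree at a time.

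Write $\varphi\colon\BBB_{\hh}\to\gr\BBB$ for the canonical $H$-bimodule graded algebra epimorphism of \cref{Epimorphism}, and assume $\BBB$ is a PBW deformation, so that some graded algebra isomorphism $\alpha\colon\gr\BBB\to\BBB_{\hh}$ exists. Since $\alpha$ is graded it carries degree-$0$ parts to degree-$0$ parts; as both degree-$0$ components equal $H$ (the homogeneous relations $\RR=\LH[\PP]$ lie in positive degree because $\PP\cap H=\{0\}$ forces every leading homogeneous part to have positive degree), the restriction $\tau:=\alpha|_{H}$ is an algebra automorphism of $H$. The essential point is that $\alpha$ need not fix $H$ pointwise, and absorbing this twist is exactly what the $\sigma$-twisted formulation of \cref{NoetherianImpliesHopfian} is for.

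I would then form the composite $f:=\alpha\circ\varphi\colon\BBB_{\hh}\to\BBB_{\hh}$, which is a graded surjection as a composite of graded surjections. For $h\in H$ and $x\in\BBB_{\hh}$, using that $\varphi$ is an honest $H$-bimodule map and that $\alpha$ is multiplicative, $f(hx)=\alpha\big(h\,\varphi(x)\big)=\tau(h)\,f(x)$, and symmetrically on the right. Thus $f$ is a surjective $H$-bimodule homomorphism $\BBB_{\hh}\to{}^{\tau}(\BBB_{\hh})^{\tau}$. Restricting to the degree-$n$ component yields a surjective $H$-bimodule homomorphism $(\BBB_{\hh})_n\to{}^{\tau}\big((\BBB_{\hh})_n\big)^{\tau}$, where $(\BBB_{\hh})_n$ is finitely generated as an $H$-bimodule by hypothesis; since $H$ is doubly Noetherian, \cref{NoetherianImpliesHopfian} applied with the automorphism $\tau$ makes this degree-$n$ map bijective. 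As $n$ was arbitrary, $f$ is bijective, whence $\varphi=\alpha^{-1}\circ f$ is bijective and $\BBB$ is a strong PBW deformation.

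The routine parts are the two verifications $f(hx)=\tau(h)f(x)$ and $f(xh)=f(x)\tau(h)$, together with the observation that the twist by $\tau$ preserves each graded degree (so that the degree-wise finite generation hypothesis is all that is needed). The genuine crux, and the only place where the noncommutativity of $H$ costs anything, is recognizing that the given isomorphism may act nontrivially on $H$ in degree $0$; once this automorphism $\tau$ is identified and matched to the convention of \cref{NoetherianImpliesHopfian}, the Hopfian-type lemma does all the real work.
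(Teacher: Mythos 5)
Your proposal is correct and follows essentially the same route as the paper's own proof: compose the canonical epimorphism of \cref{Epimorphism} with the isomorphism supplied by the PBW hypothesis to get a surjective graded endomorphism of $\BBB_{\hh}$, observe that it twists the $H$-bimodule structure by the degree-zero automorphism, and apply \cref{NoetherianImpliesHopfian} degree by degree to conclude bijectivity of the canonical map. Your verification that the twist $\tau$ is multiplicative and preserves degrees is exactly the (routine) content the paper leaves implicit, so there is nothing to add.
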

\begin{proof}
Every strong PBW deformation is
a PBW deformation by definition.
For the converse, assume $\BBB$ is a (not necessarily strong) PBW deformation and
compose the surjective graded algebra and $H$-bimodule map
  $\BBB_{\hh}\rightarrow \gr \BBB $
  of \cref{Epimorphism}
  with a graded algebra isomorphism
  $\gr \BBB \rightarrow \BBB_{\hh}$ given by the PBW hypothesis
  to obtain a surjective graded algebra endomorphism
  $f:\BBB_{\hh}\rightarrow \BBB_{\hh}$.
  To obtain an $H$-bimodule map, 
  we note that $f$
  restricts to an automorphism
  $\sigma$
  on $H\subset T_H(W)$ and we take
  $\, ^{\s\!} \BBB_{\hh}^{\s}$, the $\s$-twisted $H$-bimodule.  The map $f$ defines 
  a surjective $H$-bimodule homomorphism 
  $\BBB_{\hh} \rightarrow \, ^{\, \s\!}\BBB_{\hh}
  ^{\s}$ 
that restricts to a bijection on each graded component
    by \cref{NoetherianImpliesHopfian}.
  Hence
  the map of \cref{Epimorphism} must be an isomorphism of graded
  algebras and $B$ is a strong PBW deformation.
\end{proof}

    \begin{remark}
\label{HiddenRelations}
      {\em
        \cref{PBWImpliesLH} gives conditions for hidden
        relations
        in higher degree not immediately suggested by a cursory examination of the
generating relations.  
Following ideas of Braverman and
Gaitgory~\cite{Braverman-Gaitsgory} and Li~\cite{Li}, as well
as the theory of noncommutative
Gr\"obner
bases,
one might say
  that a $k$-vector space of generating relations $\PP\subset T_H(W)$
      for a filtered algebra $T_{H}(W)/(\PP)$
      exhibits {\em no hidden relations in higher degree}
     when  $$  (\PP) \cap 
\F_n
= \sum_{i+m + j = n} \F_i\ot_H (\PP\cap \F^m) \ot_H \F_j 
\quad\text{for } n\geq 0 
\,  $$
for $n$-th filtered component $\F_n=\F_n(T_{H}(W))$.
By \cite[Proposition 1.7]{Li}),
    $\PP$ exhibits no hidden relations in higher
    degree
    if and only if 
    $(\LH[\PP])=(\LH[(\PP)])$.
  }
\end{remark}

\begin{remark}{\em
In previous work, 
we took $W$ to be finite dimensional over $k$ and took $H$ to
be the group ring of a finite group, so each filtered component was finite
dimensional, see~\cite{SW-Koszul}. 
Here, we relax these assumptions and merely take
$H$ to be a doubly Noetherian algebra
over the field $k$ (for example, finite dimensional).
}
\end{remark}


Note that by \cref{prop:doubly-PBW}, 
we may remove the word ``strong'' from the following theorem
whenever $H$ is finite dimensional (or, more generally, doubly Noetherian) and 
$\sA$ in each graded degree is a finitely generated $H$-bimodule.
We use the $k[t]$-homogenized version 
$\BBB_t=T_{H}(W)[t]/(\PP_t)$
of $\BBB$ as in \cref{homogeneousversion}.

\begin{thm}\label{PBWImpliesDeformation}
   If a filtered algebra
   $\BBB=T_{H}(W)/(\PP)$
   is a strong PBW deformation of its homogeneous version $\BBB_{\hh}$,
then 
$\BBB_t$ is a graded deformation of $\BBB_{\hh}$ with fiber $\BBB$
at $t=1$.
  \end{thm}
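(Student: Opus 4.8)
The plan is to verify directly the three defining properties of a graded deformation for $\BBB_t$: that $\BBB_t$ is a free (equivalently flat) graded $k[t]$-module with $\deg t=1$, that $\BBB_t/t\BBB_t\cong\BBB_{\hh}$ as graded algebras, and that the fiber at $t=1$ recovers $\BBB$. I expect the two fiber identifications to be essentially formal, with the freeness carrying all the content of the strong PBW hypothesis. For the fibers I would use the evaluation homomorphisms $\phi_c\colon T_H(W)[t]\to T_H(W)$ sending $t\mapsto c$. Since $\phi_0(p_t)=\LH[p]$ and $\phi_1(p_t)=p$ for each $p\in\PP$, and each $\phi_c$ is a surjective algebra map, the image of the ideal $(\PP_t)$ under $\phi_0$ is the ideal $(\LH[\PP])=(\RR)$ and under $\phi_1$ is $(\PP)$. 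Identifying $T_H(W)[t]/(t)\cong T_H(W)$, this yields a graded algebra isomorphism $\BBB_t/t\BBB_t\cong T_H(W)/(\RR)=\BBB_{\hh}$ and, at $t=1$, $\BBB_t/(t-1)\BBB_t\cong T_H(W)/(\PP)=\BBB$; notably, neither computation uses the PBW property.

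The heart of the argument is establishing that $\BBB_t$ is $t$-torsion-free, which I would do by comparing it with the Rees algebra $\R(\BBB)=\bigoplus_{n\ge 0}\F_n(\BBB)\,t^n\subseteq\BBB[t]$. Because multiplication by $t$ embeds $\F_n(\BBB)$ into $\F_{n+1}(\BBB)$, the graded $k[t]$-module $\R(\BBB)$ is $t$-torsion-free and bounded below, hence \emph{free}, with $\R(\BBB)/t\R(\BBB)\cong\gr\BBB$ and $\R(\BBB)/(t-1)\R(\BBB)\cong\BBB$. I would then build a surjective graded $H$-bimodule algebra homomorphism
\[
\bar\Phi\colon\BBB_t\longrightarrow\R(\BBB)
\]
induced by $w\mapsto\bar w\,t\in\F_1(\BBB)\,t$ for $w\in W$, $h\mapsto\bar h\in\F_0(\BBB)$ for $h\in H$, and $t\mapsto t$. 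The key point is that $\bar\Phi(p_t)=\bar p\,t^{n}$, where $n$ is the filtered degree of $p$, and this vanishes since $p\in(\PP)$ forces $\bar p=0$ in $\BBB$; surjectivity follows from $\F_n(\BBB)=\F_1(\BBB)^n$, so $\R(\BBB)$ is generated in degrees at most $1$. The decisive observation is that reducing $\bar\Phi$ modulo $t$ recovers exactly the canonical epimorphism $\BBB_{\hh}\to\gr\BBB$ of \cref{Epimorphism}, which is an \emph{isomorphism} precisely because $\BBB$ is a strong PBW deformation.

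To promote this to injectivity of $\bar\Phi$, I would apply $-\otimes_{k[t]}k$ to the short exact sequence $0\to N\to\BBB_t\xrightarrow{\bar\Phi}\R(\BBB)\to 0$ with $N=\ker\bar\Phi$. Freeness of $\R(\BBB)$ gives $\operatorname{Tor}_1^{k[t]}(\R(\BBB),k)=0$, so $N/tN\hookrightarrow\BBB_t/t\BBB_t$ is injective; since $\bar\Phi\bmod t$ is the isomorphism above, $N/tN=0$, and a graded Nakayama argument ($N$ is graded and bounded below with $N=tN$) forces $N=0$. Thus $\bar\Phi$ is an isomorphism, $\BBB_t\cong\R(\BBB)$ is free over $k[t]$, and lifting a homogeneous $k$-basis of $\gr\BBB\cong\BBB_{\hh}$ to a $k[t]$-basis exhibits $\BBB_t\cong\BBB_{\hh}\otimes k[t]$ as graded $k[t]$-modules. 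Together with the fiber computations this shows $\BBB_t$ is a graded deformation of $\BBB_{\hh}$ with fiber $\BBB$ at $t=1$. I expect the main obstacle to be exactly this torsion-freeness step: without the strong PBW property there is no reason for $\bar\Phi\bmod t$ to be injective, and the kernel $N$ precisely measures the failure of $(\LH[\PP])=(\LH[(\PP)])$, so the hypothesis must enter here through \cref{Epimorphism}.
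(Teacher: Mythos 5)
Your proof is correct, but it takes a genuinely different route from the paper's. The paper fixes a $k$-linear section $\iota$ of the projection $T_{H}(W)\to\BBB_{\hh}$, uses it to build a graded $k[t]$-module map $\Upsilon:\BBB_{\hh}[t]\to\BBB_t$, proves surjectivity degree by degree, and then constructs an explicit left inverse $\Upsilon'$; the strong PBW hypothesis enters in showing $\Upsilon'$ is well defined, via \cref{PBWImpliesLH} together with Li's Proposition~1.7, which give $(\PP_t)=\{q_t:q\in(\PP)\}$. You instead compare $\BBB_t$ with the Rees algebra $\bigoplus_{n\geq 0}\F_n(\BBB)\,t^n$ of the filtration: it is graded, bounded below, and $t$-torsion-free, hence free over $k[t]$; your map $\bar\Phi$ is well defined because $\bar p=0$ in $\BBB$ for $p\in\PP$, surjective because $\BBB$ is generated in filtered degrees $\leq 1$, and the strong PBW hypothesis is invoked exactly once, to identify $\bar\Phi\bmod t$ with the canonical epimorphism of \cref{Epimorphism} (an isomorphism by hypothesis), after which $\operatorname{Tor}$-vanishing and graded Nakayama force injectivity. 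Both arguments are sound, and your fiber computations via evaluation at $t=0$ and $t=1$ match the paper's. What each buys: your version is more conceptual, avoids the external appeal to Li's Proposition~1.7, and localizes the hypothesis cleanly, making visible the equivalence between the strong PBW property and $t$-flatness of the homogenized algebra, in the spirit of Braverman--Gaitsgory; the paper's version is more constructive, and that explicitness is not wasted, since the section-based isomorphism $\Upsilon$ (reappearing as $\Psi$) is reused in the proof of \cref{ForwardsDirection} to read off the multiplication maps $\mu_1,\mu_2$ of the resulting deformation in terms of the parameters $\lambda,\alpha,\beta$.
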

  \begin{proof}
 By construction,
    $\BBB_t/t\BBB_t$ and $\sA=\BBB_{\hh}=T_H(W)/(\RR)$ are isomorphic as graded algebras
for $\RR = \LH [ \PP]$
and $\BBB_t$ is a graded algebra for $t$ in degree $1$ with fiber
$\BBB$ at $t=1$.
We show that this in turn implies that 
$\BBB_t \cong
\sA[t]$  as $k[t]$-modules.
To construct an isomorphism, we
take the grading and filtration on $T_{H}(W)[t]$ with $t$ in
    degree $0$ throughout the rest of this proof.

Since $k$ is a field, 
the projection map $T_{H}(W)\rightarrow 
    T_{H}(W)/(\RR)$ splits as a $k$-linear map 
    and we fix a choice of $k$-linear section
               $\iota:T_{H}(W)/(\RR)\rightarrow T_{H}(W)$
               (by making a choice of coset representatives).
We extend to a map  of filtered 
    $k[t]$-modules (using that $\RR$ is homogeneous),
    $$\iota[t]: \big(T_{\! _H}(W)/(\RR)\big)[t]
    \longrightarrow T_{\! _H}(W)[t]\, . $$  
    Let $\Upsilon$ be the composition with projection
    onto $T_{H}(W)[t]/(\PP_t)$, a filtered $k[t]$-module map:
$$
\begin{aligned}
  \Upsilon:\ \sA[t]=\big(T_{\! _H}(W)/ (\RR)\big)[t]
  \xrightarrow{ \  \iota [t]\ } 
  T_{\! _H}(W)[t]
  \longrightarrow
T_{\! _H}(W)[t]/(\PP_t)=\BBB_t 
\,  .
\end{aligned}
$$

To show that $\Upsilon$
is bijective, we 
fix $n\geq 0$
and restrict to the $n$-th filtered component:
$$
\Upsilon_n:  \F_n (\sA[t])
\longrightarrow 
\F_n(\BBB_t ) 
\,  . 
$$
Note that for $n=0$, $\Upsilon_n$ is bijective since
$\F_0(\sA[t])=H[t]$
and $\iota$ must be the identity on $H$ as $\RR\cap H$ is empty by assumption.

We fix $n>0$ and  argue that
$\Upsilon_n$ is onto.
Assume inductively that
$$\Upsilon_{n-1}: \F_{n-1}( \sA[t] ) \longrightarrow \F_{n-1}(\BBB_t)$$
is onto.
Fix $x\in T_{H}(W)$ homogeneous of degree $n$
and suppose
$\Upsilon$ takes $x+(\RR)$ to some $x'+(\PP_t)$.
By construction of $\Upsilon$, the element $x-x'$ lies in $(\RR)$, so
without loss of generality
 $$x-x'= \sum_{p\in \PP} a_p \ \LH(p)\ b_p,$$ 
 a finite sum, for some $a_p$, $b_p$ in $T_{H}(W)$
 with each $a_p \, \LH(p)\, b_p$  homogeneous of degree $\leq n$
since $\RR = \LH[\PP]$ is a homogeneous ideal.
Then $x + (\PP_t)$ is 
\begin{equation}\label{CancelTopDegree}
  \begin{aligned}
 x + (\PP_t)
=  x'+ \sum_{p\in \PP} a_p\, \LH(p) \, b_p + (\PP_t)
=
x'-\sum_{p\in \PP} a_p\,  (p_t-\LH(p) )\,  b_p
+ (\PP_t)
\, .
\end{aligned}
\end{equation}
But  each $a_p\, (p_t-\LH(p))\, b_p$ is filtered of degree $<n$
as the leading homogeneous part of $p_t$ and $p$ coincide
(recall $\deg t=0$),
so each
summand on the right side of \cref{CancelTopDegree}
lies in $\ima \Upsilon_{n-1}\subset \ima \Upsilon_{n}$ by induction.
Then as $x'+(\PP_t) \in\ima \Upsilon_n$ as well, we must have $x+(\PP_t)
\in\ima \Upsilon_n$
and
$\Upsilon_n$ is onto.
Hence $\Upsilon$ is onto.

We construct a map for each $n>0$,
$$
\begin{aligned}
  \Upsilon_n': \F_n(\BBB_t)
  \longrightarrow
(\sA[t])_n 
\, ,  
\end{aligned}
$$
by restricting the $k[t]$-module projection map
(a filtered map for $t$ in degree $0$)
$$T_{\! _H}(W)[t]\relbar\joinrel\twoheadrightarrow 
\big(T_{\! _H}(W)/(\RR)\big)[t]
$$
to the $n$-th filtered component
and then projecting to the associated graded algebra
using the fact that $\sA[t]$ is already graded:
Consider
$$
\begin{aligned}
  \Psi_n: 
   \F_n\big(T_{\! _H}(W)[t]\big)
  \relbar\joinrel\twoheadrightarrow 
  \F_n\Big( \big(T_{\! _H}(W)/(\RR)\big)[t]\Big)=\F_n(\sA[t])
  \relbar\joinrel\twoheadrightarrow 
  \F_n(\sA[t])/\F_{n-1}(\sA[t])
  \cong \sA[t]_n
  \,  .
\end{aligned}
$$
We argue that $(\PP_t) \cap \F_n\big(T_{H}(W)[t]\big)$
lies in the kernel
giving rise to an onto
algebra map
$$
\begin{aligned}
\Upsilon'_n: \F_n(\BBB_t) = \F_n\big(T_{\! _H}(W)[t]/(\PP_t)\big)
  \relbar\joinrel\twoheadrightarrow (\sA[t])_n 
\,  .
\end{aligned}
$$

To see that $(\PP_t) \cap \F_n\big(T_{H}(W)[t]\big)
\subset \ker \Psi_n$,
take an element $x(t)$ of the ideal $(\PP_t)$
of $T_{H}(W)[t]$ of filtered degree  $n$.
By \cite[Proposition 1.7]{Li}, 
$$(\PP_t) = (\PP)_t=\{q_t: q \in (\PP)\}$$
as $(\LH[(\PP)]) = (\LH[\PP])$ by \cref{PBWImpliesLH}.
Thus we may assume without loss of generality that
$x(t)$ is the $k[t]$-homogenization of some finite sum
$x=\sum_{p\in \PP} a_p\, p \, b_p$ in $(\PP)$
of filtered degree $\text{fdeg}(x)=n$
with each $a_p$, $b_p$ in $T_H(W)$.
Using \cite[Proposition 1.7]{Li} again,
we may assume each
$
\text{fdeg}(a_p)+
\text{fdeg}(p)+
\text{fdeg}
(b_p)
\leq n$
(see \cref{HiddenRelations}).
Since $\Psi_n\equiv 0$ on $\F_{n-1}(T_H(W)[t])$
and $x(t)=x_t$,
we also may assume
each $\text{fdeg}(a_p \,p \, b_n)=n$,
else subtract off lower degree terms from $x$
without changing $\Psi_n(x(t))$.
Then
$$
n = \text{fdeg}(a_p \, p\, b_p)
\leq 
\text{fdeg}(a_p)+
\text{fdeg}(p)+
\text{fdeg}
(b_p)
\leq
n\, 
$$
and we have equality thoughout.
This in turn implies each $\LH[a_p \, p\, b_p]
= \LH [a_p]\, \LH[p]\, \LH[b_p]$
and $x(t)=\sum_p (a_p\, p\, b_p)_t$.
Set $y(t)=\sum_{p\in \PP} (a_p \, \LH[p]\, b_p)_t$.
Then $\Psi_n(y(t)) = 0$ as $\LH[\PP]=\RR$
and $\Psi_n$ takes coefficients mod $(\RR)$.
Thus
$$
\begin{aligned}
  \Psi_n(x(t))
  =\Psi_n\big(x(t)-y(t) \big)
=\Psi_n\Big(\sum_{p\in\PP}(a_p\, p\, b_p)_t - (a_p\, \LH[p]\, b_p)_t \Big)
=0
\end{aligned}
$$
as each summand of the input has filtered degree $<n$.
Hence $\Upsilon'_n$ is well-defined.

Since $\Upsilon'_n$ is zero on $\F_{n-1}(\BBB_t)$ for all $n$,
we obtain a $k[t]$-module map
$$\Upsilon': \BBB_t\longrightarrow
\gr \BBB_t\longrightarrow \oplus_n (\sA[t])_n\cong \sA[t]$$
(given by projection followed by the map $\oplus_n \Upsilon_n'$).
One may check directly that $\Upsilon' \circ \Upsilon = 1$,
the identity map on $\sA[t]$,
so $\Upsilon$ is not only onto but also 
one-to-one. 
Hence $\Upsilon: \sA[t] \rightarrow \BBB_t$ is a 
$k[t]$-module isomorphism
and $\BBB_t$ is a graded deformation of $\sA$.
\end{proof}

\section{PBW deformations and fibers of linear-quadratic algebras}
\label{sec:LinearQuadraticAlgs}

We again fix throughout this section an algebra $H$ and an 
$H$-bimodule $W$ which will serve as a generating set of some filtered
algebra. We give the elements of $W$ degree $1$ and take
the tensor algebra $T_{H}(W)$ over $H$ as a graded
$k$-algebra (as in the last sections).
We again consider a homogeneous $k$-subspace of relations $\RR$
with $A=T_H(W)/(\RR)$ graded in the usual way, i.e.,
with $i$-th degree component
$A_i=(W^{\ot_{\!\Ho} i} + (\RR))/ (\RR)$.

\subsection*{Linear-quadratic homogeneous algebras}
We say a quotient $\A=T_{H}(W)/(\RR)$ is a
{\em linear-quadratic homogeneous algebra}
when each generating relation in the subspace $\RR$
is homogeneous of degree $1$ or degree $2$, i.e., when
$$\RR \subset W \oplus (W\ot_H W)\, .$$

\subsection*{Basis generating the degree $2$ relations}
Now suppose $W$ is a free $H$-bimodule and let $V$
be the $k$-span of a free basis.
We may identify
the $k$-vector space
$V\ot V$ with the $k$-subspace 
of $W\ot_H W$ generated by $\{v\ot_H v': v, v'\in V\}$,
writing
\begin{equation}\label{eqn:VVWW}
   V\ot V\subset  W\ot_H W\, ,
\end{equation}
since
$$
\begin{aligned}
  V\ot V 
  &\cong k1_H \ot (V\ot k1_H \ot V)\ot k1_H \\
  &\subset \ \ 
  H\, \ot V\ot \, H\ \ot\ V \ot H\
  \cong (H\ot V\ot H) \ot_H (H \ot V \ot H)
  \cong W\ot_H W 
  \, .
\end{aligned}
$$
We say the free basis of $W$ {\em generates the degree 2 relations}
of 
$T_H(W)/(\RR)$ 
when the degree~$2$ part $\RR\cap (W\ot_H W)$
is a subset of $V\ot V$.

\vspace{1ex}

\begin{thm}\label{DeformationImpliesPBW}
  Let $W$ be a free $H$-bimodule with basis generating the degree $2$
  relations of a linear-quadratic algebra $\sA=T_{H}(W)/(\RR)$.
If $\sA_t$ is a graded deformation
of $\sA$, then 
any fiber  $\sA_t|_{t=c}$  ($c\in k$)
  is isomorphic to some strong PBW deformation 
  of $\sA$
  as a filtered algebra.
\end{thm}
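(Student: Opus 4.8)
The plan is to realize the fiber \emph{explicitly} as a quotient $T_{\!_H}(W)/(\PP)$ by lifting the defining relations $\RR$ through the deformation, and then to extract the strong PBW property from the flatness already built into a graded deformation; this is the converse to \cref{PBWImpliesDeformation}. First I would reduce to the fiber at $t=1$. For $c=0$ the fiber is $\sA_t/t\sA_t\cong\sA$, which is its own (trivially strong) PBW deformation with $\PP=\RR$. For $c\neq 0$, rescaling $t\mapsto ct$ (a graded automorphism of $k[t]$, sending $\mu_i$ to $c^i\mu_i$) produces another graded deformation of $\sA$ whose fiber at $t=1$ is exactly $\sA_t|_{t=c}$; so it suffices to treat $c=1$.

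Next I would manufacture a graded presentation of $\sA_t$ itself. Since a graded deformation has each $\mu_i$ of degree $-i$, the star-product is graded with $\deg t=1$, and since $\sA$ is generated over $H$ by $V$, lifting each $v\in V$ to degree $1$ in $\sA_t\cong\sA[t]$ gives a graded $H[t]$-algebra surjection $\Phi\colon T_{\!_H}(W)[t]\twoheadrightarrow \sA_t$ (surjectivity by a graded Nakayama argument in each degree). Here the linear-quadratic hypothesis enters: for each homogeneous $r\in\RR$, of degree $d\le 2$ and with quadratic part in $V\ot V$, one has $\Phi(r)\in t\sA_t$, say $\Phi(r)=\sum_{j\ge 1}\alpha_j t^{\,j}$ with $\alpha_j\in\sA_{d-j}$, and one solves triangularly in powers of $t$, lifting each $\alpha_j$ to $W^{\ot_H (d-j)}$, to produce a homogeneous $\tilde r\in\ker\Phi$ of degree $d$ with $\tilde r|_{t=0}=r$. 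The recursion terminates because $d\le 2$, and it leaves the quadratic part of $\tilde r$ equal to $r\in V\ot V$. Setting $\PP=\{\tilde r|_{t=1}\}$, we get $\tilde r=(p_r)_t$ and $\LH[\PP]=\RR$, so $\BBB_{\hh}=\sA$ and $\PP$ is a genuine linear-quadratic relation set with quadratic part in $V\ot V$.

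The central step, and the main obstacle, is to show that these low-degree lifts already generate the whole deformation ideal, i.e.\ $(\{\tilde r\})=\ker\Phi$, equivalently $\sA_t\cong \BBB_t=T_{\!_H}(W)[t]/(\PP_t)$. I would argue that the induced graded surjection $q\colon \BBB_t\to\sA_t$ reduces modulo $t$ to the identity of $\sA=\BBB_{\hh}$, so that $\ker q\subseteq t\,\BBB_t$; then, because $\sA_t\cong\sA[t]$ is $k[t]$-free and hence $t$ is a non-zero-divisor on $\sA_t$, a degree induction forces $\ker q=0$ (if $x\in(\ker q)_n$ then $x=ty$ with $y\in(\BBB_t)_{n-1}$, and $t\,q(y)=q(x)=0$ gives $q(y)=0$, so $y\in(\ker q)_{n-1}=0$; the base case $(\ker q)_0=0$ is immediate). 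This is precisely the step ruling out hidden higher-degree relations in the deformation, and it is where flatness of the graded deformation does the real work. Specializing at $t=1$ then identifies the fiber as the filtered algebra $\BBB=T_{\!_H}(W)/(\PP)$ with $\BBB_{\hh}=\sA$.

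Finally I would verify that $\BBB$ is a \emph{strong} PBW deformation of $\sA$, not merely a PBW one. Using $\sA_t\cong\sA[t]$ as graded $k[t]$-modules, one computes $\F_n(\BBB)\cong\bigoplus_{i\le n}\sA_i$, whence $\gr_n\BBB\cong\sA_n$; tracing through the identifications shows the canonical epimorphism $\BBB_{\hh}=\sA\longrightarrow\gr\BBB$ of \cref{Epimorphism} is an isomorphism in each degree (equivalently $(\LH[\PP])=(\LH[(\PP)])$, the criterion of \cref{PBWImpliesLH}). Thus $\BBB$ is a strong PBW deformation of $\sA$, and it is isomorphic as a filtered algebra to the fiber $\sA_t|_{t=c}$, completing the argument. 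I expect the generation step of the third paragraph to be the delicate point; the remaining parts are bookkeeping with the grading, the non-zero-divisor property of $t$, and the choice of $k$-linear sections used to lift coefficients.
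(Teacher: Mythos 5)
Your proposal is correct, but it takes a genuinely different route from the paper at the two key steps. Where you build the lifted relations $\tilde r$ by recursive correction in powers of $t$, the paper instead extracts explicit parameter functions $\lambda,\alpha,\beta$ from the multiplication maps $\mu_1,\mu_2$ of the deformation and sets $\PP=(1-\alpha-\beta)(\RR_2)\cup(1-\lambda)(\RR_1)$, verifying $\PP_t\subset\ker\psi$ by direct computation with the star product; the two constructions yield the same relations, but the explicit form is what gets reused later (in \cref{BackwardsDirection} and in the definition of $\chabl$), while your version is shorter for this theorem alone. (A small correction: termination of your recursion is automatic since coefficient degrees strictly decrease and are bounded below by $0$; what the hypothesis that the basis generates the degree-$2$ relations buys is that the quadratic part of $\tilde r|_{t=1}$ stays in $V\ot V$, so $\PP$ has the required shape and $\BBB_{\hh}=\sA$ with the same presentation.) More substantially, your central step proves directly that $q\colon\BBB_t\to\sA_t$ is injective, via $\ker q\subseteq t\BBB_t$ (which needs the easy identity $(\PP_t)+(t)=(\LH[\PP])+(t)$ so that $\BBB_t/t\BBB_t\cong\sA$) together with $t$ being a non-zero-divisor on $\sA_t\cong\sA[t]$ and a degree induction. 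The paper never proves this: it works only with the fiber surjection $\Theta\colon\BBB\to\sA_t|_{t=1}$, shows the induced map $\Theta_{\gr}$ on associated graded algebras is a left inverse of the canonical epimorphism of \cref{Epimorphism} (an algebra map that is the identity on generators), and then upgrades $\Theta$ to an isomorphism by a $5$-lemma induction on filtered pieces. Your torsion-freeness argument is clean and in fact gives more at that stage, namely an isomorphism of graded algebras $\BBB_t\cong\sA_t$, from which the fiber isomorphism falls out by specializing $t=1$.

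One step you must make precise, however: in your last paragraph, the chain ``$\F_n(\BBB)\cong\bigoplus_{i\le n}\sA_i$, whence $(\gr\BBB)_n\cong\sA_n$'' cannot be used as a dimension count. A surjection onto an abstractly isomorphic object need not be injective when $H$ (hence $\sA_n$) is infinite dimensional; this failure mode is exactly why the paper distinguishes PBW from strong PBW (see \cref{NoetherianImpliesHopfian}, \cref{rk:khare}, and \cref{prop:doubly-PBW}). What saves you is a trace through your identifications: the isomorphism $(\gr\BBB)_n\to\sA_n$ induced by $q$ and specialization, composed with the canonical epimorphism $\sA_n\to(\gr\BBB)_n$, sends $x+(\RR)$ to the top graded component of $q(x+(\PP_t))=\Phi(x)$, which is $x+(\RR)$ again since $\Phi$ reduces mod $t$ to the projection $T_{\!_H}(W)\to\sA$. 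Hence the composite is the identity and the canonical epimorphism is injective. This is the same left-inverse device as the paper's verification that $\Theta_{\gr}\,\Theta_{\gr}'=1_{\sA}$; with it spelled out, your argument is complete.
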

\begin{proof}
   We construct a strong PBW deformation of $\sA$
  depending on a choice of section
  to the canonical projection map
  $$\pi_{_\sA}:T_{\! _H}(W)\rightarrow T_{\! _H}(W)/(\RR)=\sA\, ,
  \qquad x\longmapsto \bar{x}:=x+(\RR)
  \, .
  $$
  Define a $k$-linear map depending on a choice of 
 coset representatives
  \begin{equation}\label{section}
    \iota_{_\sA}  : T_{\! _H}(W)/(\RR) \longrightarrow T_{\! _H}(W) 
  \qquad\text{ with }\quad \pi_{_\sA}\, \iota_{_\sA} = 1_{_\sA}\, .
  \end{equation}
    
As $\RR\subset W\oplus (W\ot_H W)$,
we may identify $H$ with $H+(\RR)$ in $\sA$
and we write $h$ for $\bar h$ throughout.
Also, given
any $k$-linear function $f:U\rightarrow \sA$
for a  $k$-vector space $U$,
define $f\, t^i: U\rightarrow \sA[t]$ 
by $u\mapsto f(u)\, t^i$
for ease with notation with the central parameter $t$.

{\bf Multiplication in the graded deformation.}
  Identify $\sA_t$ with $\sA[t]$ as a graded vector space
  with $\deg t=1$ and multiplication $\star$:  For $a,b$ in $\sA$
  with product $ab$ in $\sA$,
\begin{equation}\label{eqn:star-product}
  a\star b=ab + \sum_{i\geq 1} \mu_i(a\ot b)t^i
  \qquad
  \text{for $k$-linear maps $\mu_i:\sA\ot \sA\rightarrow \sA$ graded of degree
    $-i$.}
\end{equation}
  Consider the unique algebra and $\Ho[t]$-bimodule homomorphism
  \begin{equation}
    \label{PsiDef}
    \psi:\
    T_{\! _H}(W)[t] \longrightarrow \sA_t\,
   \end{equation}
  defined by $h\mapsto \bar h$ and
$v\mapsto \bar v$
for $h$ in $\Ho$ and $v$ in $V$.
Note that $\psi$ is a graded algebra map by construction.
We identify some elements in the kernel giving rise to parameter functions
$\lambda$, $\alpha$, $\beta$
that will feature prominently in the rest of the paper.

{\bf Deforming the degree $1$ relations.}
  We first claim that there is a $k$-linear
  function $\lambda: W\rightarrow \Ho$
  with $\psi\big|_{W}=\pi_{_\sA}\big|_{W}+\lambda t$ 
  since $\sA_t$ is graded.
  Indeed, if $w=hvh'$ in $W$ for $h,h'$ in $\Ho$ and $v$ in $V$, then
$$
\psi(hv)=\psi(h\ot_H v)=\psi(h)\star \psi(v) 
=\bar h\star \bar v  =  h \bar v + \mu_1(h\ot \bar v)t 
\, . 
$$
Then as $\deg\mu_1=-1$, $\deg h=0$, and $\deg \bar v=1$,
the element
$\mu_1(h\ot \bar v)$ has degree $0$ and thus must lie in $\Ho$,
so
$$
\psi(hvh')=\psi(hv)\star \psi(h') 
=\big(h\bar v + \mu_1(h\ot \bar v)t \big) \star h'
=h\bar vh' + \mu_1(h\bar v \ot h')t + \mu_1(h\ot \bar v) h' t\, ,
\, 
$$
i.e., $\psi(w)=\bar w+\lambda(w)\, t$ for all $w\in W$
for some $k$-linear function 
$\lambda: W \rightarrow H$.
Hence
\begin{equation}\label{psi}
  \psi\big|_{W}= \pi_{_\sA}+\lambda\, t
  \, .
  \end{equation}

{\bf Parameter functions giving filtered relations.}
We again use the fact that $\sA_t$ is graded:
As $\deg \mu_i=-i$ and $W$ lies in degree $1$
   and $\Ho$ in degree $0$,
   $$\begin{aligned}
     &\mu_1 (\pi_{_\sA}\ot \pi_{_\sA}) (V\ot V)\subset &&(T_{\! _H}(W)/(\RR))_1=W+(\RR),
   \quad\text{ and }\\
   &\mu_2 (\pi_{_\sA}\ot \pi_{_\sA}) (V\ot V)\subset &&(T_{\! _H}(W)/(\RR))_0=H+(\RR)\cong H
   \, .
   \end{aligned}
   $$
   Thus our above map
\begin{equation}\label{ConstructingLambda}
\lambda :W \longrightarrow H,
\qquad \lambda(hvh')=
\mu_1(h\bar v \ot h') + \mu_1(h\ot \bar v) h' \in H+(\RR)\cong H
\end{equation}
for $h\in H,\ v,v'\in V$,
may be used to 
define $k$-linear maps
\begin{equation}\label{ConstructingParameters}
\begin{aligned}
&\alpha:V\ot V\longrightarrow W,
&& \alpha= \iota_{_\sA}\, \mu_1 \, (\pi_{_\sA}\ot \pi_{_\sA})
\\
 &\beta:V\ot V\longrightarrow H,
 &&\beta= \mu_2\, (\pi_{_\sA}\ot \pi_{_\sA})
 -\lambda\, \alpha
\, .
\end{aligned}
\end{equation}
We use these maps to construct a set of filtered relations $\PP$
defining a nonhomogeneous algebra $T_{H}(W)/(\PP)$ that we show
is a strong PBW deformation of $\sA$.  The map $\alpha$ records
values of the first multiplication map $\mu_1$ defining
the deformation $\sA_t$ whereas the map $\beta$
records values of the second multiplication map $\mu_2$.
Note that $\beta$ is adjusted by a correction term (using $\lambda$) to account for
a deformed $H$-action on the output of $\mu_1$.

Let $\RR_1$ and $\RR_2$ be the degree 1 and degree 2 relations defining $\sA$:
$$\RR_1=\RR\cap W ,
\qquad \RR_2=\RR\cap (W\ot_H W)\subset V\ot V\, .$$
  The resulting collection of filtered relations of degree 1 and 2,
  $$
  \PP=
  (1 -\alpha -\beta)(\RR_2)
  \cup (1-\lambda)(\RR_1)
\quad  \subset H\oplus W \oplus (W\ot_H W) , $$
defines a filtered $k$-algebra $$\BBB =  T_{H}(W)/(\PP)\, .$$
The $k[t]$-homogenized version
  $$    \PP_t=
  (1 -\alpha t -\beta t^2)(\RR_2)
  \cup (1-\lambda t)(\RR_1)
  \quad\subset T_{\! _H}(W)[t]
\, 
  $$
  defines a graded $k[t]$-algebra   for $\deg t=1$:
  $$\BBB_t =  T_{\! _H}(W)[t]/(\PP_t)\, .$$

  {\bf Homogenized relations lie in the kernel.}
  We show that $\PP_t$ lies in the kernel of $\psi:
  T_H(W)[t]\rightarrow \sA_t$.
  First consider the relations of degree $1$:
As $\psi\big|_H=1_H$ (for $H$ identified with $H+(\RR)$),
\cref{psi} implies that
$$
\psi\, (1_W-\lambda t)
=\psi\, 1_W -\psi\, \lambda t
=\psi-\lambda t
= \pi_{_\sA}
\qquad\text{ on $W$.}
$$
Thus on $\RR_1=\RR\cap W$,
the function
$\psi\, (1_W-\lambda t)$ must vanish
(since $\pi_{_\sA}(\RR)=0$), so 
\begin{equation}\label{R1}
(1-\lambda t)(\RR_1)\subset \ker \psi
\, .
\end{equation}
The relations of degree $2$ require a more subtle argument.
For $v$, $v'$ in $V$,
   $$
   \begin{aligned}
   \psi(v\ot_{H} v')
   &=\psi(v)\star \psi( v')  = \bar v \star \bar v'
   =
 \bar v \, \bar v' 
   + \mu_1(\bar v \ot \bar v' ) t 
   + \mu_2(\bar v \ot \bar v' )t^2\, 
   .
 \end{aligned}
 $$
 We rewrite this succinctly making the identification
 of $V\ot V$ with a subspace of $W\ot_H W$
 temporarily explicit via the map $f: v\ot v'\mapsto v\ot_H v'$
 for clarity:
    \begin{equation}\label{Onf}
   \begin{aligned}
   \psi \, f
   =\pi_{_\sA}\, f
      + \bar\mu_1 \, t 
   + \bar\mu_2\, t^2\, 
   \qquad\text{ on } V\ot V
\, 
 \end{aligned}
 \end{equation}
 for $\bar\mu_i=\mu_i\, (\pi_{_\sA}\ot \pi_{_\sA})$,
 the function $v\ot v'\mapsto 
 \mu_i(\bar v\ot \bar v')$.
Then on $V\ot V$,
$$
\begin{aligned}
  \psi \, \big(
  f
  - \alpha\,  t
  - \beta\,  t^2
  \big)
= (  \pi_{_\sA}\, f+\bar\mu_1\, t +\bar\mu_2\, t^2)
  -\psi\, \alpha\, t
  -\beta\, t^2
    \, 
\end{aligned}
$$
by \cref{Onf}, as $\psi\, \beta=\beta$ as $\beta$ maps to $H$ and
$\psi\big|_{H}=1_H$. Then by \cref{ConstructingParameters}
and \cref{section},
$$
\begin{aligned}
  \psi \, \big(
  f
  - \alpha t 
  - \beta\,  t^2
  \big)   
&=(  \pi_{_\sA}\, f+\bar\mu_1\, t +\bar\mu_2\, t^2)
  - \psi\, (\iota_{_\sA}\,\bar\mu_1)\,  t
  -(\bar\mu_2   -\lambda\,\iota_{_\sA}\, \bar\mu_1)\, t^2
\\
&=
(  \pi_{_\sA}\, f+\bar\mu_1\, t +\bar\mu_2\, t^2)
  - (\pi_{_\sA}+ \lambda\, t)(\iota_{_\sA}\, \bar\mu_1)\, t
  -\bar\mu_2 \,t^2  +\lambda\,\iota_{_\sA}\, \bar\mu_1\, t^2
  \\
  &=
    \pi_{_\sA}\, f +\bar\mu_1\, t +\bar\mu_2\, t^2
    - \pi_{_\sA}\, \iota_{_\sA}\, \bar\mu_1\, t
    - \lambda\, \iota_{_\sA}\, \bar\mu_1\, t^2
  -\bar\mu_2 \,t^2  +\lambda\,\iota_{_\sA}\, \bar\mu_1\, t^2
  \\
  &= \pi_{_\sA}\, f
\end{aligned}
$$
using \cref{psi} for the second equality
noting that $\iota_{_\sA}\, \bar\mu_1$ on $V\ot V$ lies in $W$.
Thus
$$
\psi\big( v\ot_H v' -\alpha(v\ot v')t - \beta(v\ot v')t^2 \big)
=
\pi_{_\sA}(v\ot_H v')
\quad\text{ for all $v,v'$ in $V$}.
$$
We reidentify $V\ot V$ with a subspace of $W\ot_H W$
and write, with a slight abuse of notation, 
$$
\psi\, (1-\alpha t-\beta t^2)=\pi_{_\sA}
\qquad\text{ on }\quad V\ot V\, .
$$
Then since $\pi_{_\sA}(\RR)=\RR/\RR=0$,
\begin{equation}\label{R2}
(1
  - \alpha t
  - \beta t^2)(\RR_2)
  \subset
  \ker \psi
  \, .
\end{equation}
Hence 
$\PP_t\subset \ker \psi$ by \cref{R1,R2}.

{\bf Strong PBW deformation.}
The graded algebra map
$\psi$ of \cref{PsiDef}
thus induces a surjection of graded algebras
for $B_t$ the $k[t]$-homogenized version of $B$
(see \cref{LH}):
      $$ \BBB_t =  T_{\! _H}(W)[t]/(\PP_t)
   \ \longrightarrow\ T_{\! _H}(W)[t]/\ker \psi
   \cong \sA_t
   \, .
   $$
One may verify that
taking the fibers at $t=1$ gives
a surjective map of filtered algebras,
 $$ \Theta:\
 \BBB 
 =\BBB_t|_{t=1}  \  \longrightarrow\ \sA_t|_{t=1} \,  ,
 $$ 
which in turn induces a surjective map of the associated graded 
algebras, 
 $$ \Theta_{\gr}:\ 
 \gr \BBB \
 \longrightarrow\ \gr (\sA_t|_{t=1}) \cong \sA 
 \, ,$$
 which takes $x+(\PP)+\mathcal{F}_{n-1}$
 to $\psi(x)$ when 
$x$ in $T_H(W)$ is homogeneous of degree $n$.
 
 To conclude that $\BBB$ is a strong PBW deformation of $\sA$,
 we argue
that $\Theta_{\gr}$ is left inverse to the  canonical graded
algebra epimorphism
of \cref{Epimorphism},
 $$
 \Theta_{\gr}':
 \sA 
 \longrightarrow
 \gr(\BBB)
 \, ,
 $$
which maps an element $x+(\RR)$
 to
 $x + (\PP) + \mathcal{F}_{n-1}$
 for $x$ of filtered degree $n$.
Under the composition $\Theta_{\gr}\, \Theta_{\gr}'$,
 the elements $h+(\RR)$ and $v+(\RR)$ in $\sA$
 are mapped to
$\psi(h) 
 =h+(\RR)$ and $\psi(v)
 =v+(\RR)$, respectively, for $h$ in $H$ and $v$ in $V$.
Thus $\Theta_{\gr}\, \Theta_{\gr}'=1_{_\sA}$ as
 $\Theta_{\gr}\, \Theta_{\gr}'$ is an algebra homomorphism
 and $\sA$ is generated by these cosets (as $W$ is a free $H$-bimodule
 generated by $V$).
 Thus $\Theta_{\gr}'$ is also one-to-one
 and $\BBB$ is a strong PBW deformation by definition with
 $\sA\cong \gr(\BBB)$  as graded algebras.

{\bf PBW deformation isomorphic to fiber of deformation.}
Lastly, we argue by induction that the isomorphism $\sA\cong \gr \BBB$
forces the homomorphism of filtered algebras
$$
\Theta: \BBB   \longrightarrow \sA_t|_{t=1} \, $$
to be an isomorphism.
Consider the restriction to the 
$i$-th filtered pieces $\F_i(\BBB)$ and $\F_i(\sA_t|_{t=1})$,
a vector-space map:
$$
\Theta^i: \F_i(\BBB)     \longrightarrow \F_i(\sA_t|_{t=1}) \,  . $$
For $i=0$,
we just have the identification map $\Theta^0: \Ho \rightarrow \Ho$. 
Assume by induction that $\Theta^i$ is an isomorphism
of vector spaces.  Then the $5$-Lemma
applied to the commutative diagram
$$
\begin{aligned}
  0\ \  \longrightarrow &&&\ \  \BBB^{i} &&&
  \lhook\joinrel\longrightarrow 
  &&&\ \  \ \BBB^{i+1} &&&
  \relbar\joinrel\twoheadrightarrow 
  &&& \ \  \ \ \BBB^{i+1}/\BBB^i &&
  \longrightarrow\ \  0
  \\ &&& \ \ \ \big\downarrow\ \  &&&  &&& \ \ \ \ \big\downarrow \ \
  &&& &&& \ \ \  \ \ \ \ \big\downarrow 
  \\
  0\ \ \longrightarrow &&& (\sA_t|_{t=1})^{i}\  &&&
  \lhook\joinrel\longrightarrow 
  &&& (\sA_t|_{t=1})^{i+1} &&&
  \relbar\joinrel\twoheadrightarrow 
  &&& (\sA_t|_{t=1})^{i}/ (\sA_t|_{t=1})^{i+1} \ &&
  \longrightarrow\ \  0\ 
\end{aligned}
$$
(connecting two short exact sequences)
implies that $\Theta^{i+1}$
is also a vector space isomorphism
since
$\gr \BBB \cong \gr(\sA_t\big|_{t=1})$
as vector spaces, where we have denoted each $i$-th filtered
component with a superscript $i$.
Thus $\Theta$ is bijective and defines an isomorphism 
of filtered algebras, $\BBB \cong \sA_t|_{t=1}$. 

Finally, note that the fiber $B_t | _{t=0}$ is isomorphic to $A$.
The fiber $A_t | _{t=c}$
for $c\in k - \{0\}$ 
may be handled just as the case $t=1$
since $\gr (A_t | _{t=c})\cong A$.
\end{proof}

\section{Hopf algebra actions and smash products}
\label{sec:smashproducts}

We now apply our results to the setting of
Hopf algebras acting on quadratic algebras.
We see that strong PBW deformations of the resulting smash product algebras
are fibers of graded deformations and vice versa.
In the next section, we will specialize to Hopf algebras
acting
on Koszul algebras in order to study Hopf-Koszul Hecke algebras
as PBW deformations.

\subsection*{Hopf algebra}
Let $\Ho$ be a Hopf algebra over the field $k$ 
with bijective antipode $\gamma$, 
coproduct $\Delta$, 
and counit $\varepsilon$.
We write 
the coproduct $\Delta$  (symbolically)
using Sweedler notation:
$\Delta(h)=\sum h_1\ot h_2$.
We identify
$\bar H=H/k1_H$ 
with a subspace of $H$ by choosing
a section of the vector space map
$H \twoheadrightarrow H/ k 1_H$
(so $H= \bar H \oplus k1_H$).
The section $\bar H$ may be chosen as the kernel of the counit
$\varepsilon: H\rightarrow k$.
In the case $H = kG$ is a group algebra, another good choice
is to take $\bar H$ to be the vector space spanned by all
nonidentity group elements. 

\subsection*{Hopf algebra actions}
Let $S$ be a $k$-algebra carrying a Hopf action of $\Ho$, i.e.,
$S$ is a (left) Hopf module algebra over $\Ho$.
We denote the action of $\Ho$ on $S$ by $s\mapsto\ {}^{h} s$ for $h$ in $\Ho$
and $s$ in $S$.
Thus $S$ is a $k$-algebra which is a left $\Ho$-module
satisfying 
$\, ^{h}(ss')= \sum \, ^{h_1}(s)\, ^{h_2}(s')$
and 
$\ ^{h}1_S = \varepsilon(h) 1_S$
for $h$ in $\Ho$ and $s,s'$ in $S$.
We take the induced right action of $\Ho$ on $S$ as well given by
$s \cdot h = {}^{\gamma^{-1}(h)}s$.
When $S$ is graded, we assume the action of $\Ho$ preserves
the grading, i.e.,
$\deg( ^{h}s)= \deg(s)$ for $s$ in $S$ homogeneous
and $h$ in $H$.

\subsection*{Smash product algebra}
The smash product algebra
$S\# \Ho$ is the free $S$-module with basis given by
a vector space basis of $\Ho$ 
and multiplication
\[
   (sh) \cdot (s'h') = \sum s ({}^{h_1}s')\, h_2 h'
   \quad\text{for all}\quad s,s' \in S\ \text{ and }\ h, h'\in H . 
 \]

 \vspace{1ex}
 
 \subsection*{Smash Relations}
 Now suppose that $S$ is a finitely generated (graded) quadratic algebra.
Then
$S$ is generated by a finite 
dimensional vector space $V$ in degree $1$
(with grading preserved by the action of $H$)
and $S\# H$ is graded with $H$ in degree $0$.
In addition,
\begin{equation}\label{quad-relns}
  \text{ $S = T(V)/(\R)$
for some $k$-subspace of {\em quadratic relations}
$\R\subset V\ot V$.}
\end{equation}
The role that the action of $H$ on $S$ plays in the structure of
the ring $S\# H$ is captured by the space of
{\em smash relations} in $T_H(H\ot V\ot H)$, 
\[
  \R'
  = \Span_k \{ h\ot v\ot 1_H - 1_H\ot \tau(h\ot v)  : 
   h\in \Ho, v\in V  \} \subset \Ho\ot V\ot \Ho  ,
    \]
where $\tau(h\ot s) = \sum \ ^{h_1}s\ot h_2$
for $s\in S$, $h\in \Ho$.
As the identity element $1_H$ of $H$ acts as
the identity map on $S$, its contribution to $\R'$ is 0:
\begin{equation}\label{TauOnScalars}
   1_H\ot  v\ot 1_H  - 1_H\ot \tau(1_H\ot v)  
  \ \ = \ \ 1_H\ot v\ot 1_H - 1_H \ot v\ot 1_H \ \ = \ \ 0 .
\end{equation}
Accordingly, in the definition of $\R'$, we may replace $H$
by $\bar H$ since
the relations are $k$-linear, i.e.,
\begin{equation}\label{smash-relns}
   \R' = \Span_k \{ h\ot v\ot 1_H - 1_H\ot \tau(h\ot v)  : 
   h\in \bar{\Ho}, v\in V  \} \subset \Ho\ot V\ot \Ho 
   \ \ \text{ (smash relations)} . 
 \end{equation}

 \vspace{1ex}

\subsection*{Smash product as a quotient algebra}
In the next lemma, we represent the smash product $S\# \Ho$
as a quotient of the tensor algebra  over $H$ of
the $\Ho$-bimodule $\Ho\ot V\ot \Ho$.
Throughout, we identify $\R\subset V\ot V$ 
with a subspace of the tensor algebra $T_{H}(\Ho\ot V \ot \Ho)$
by identifying 
$V\ot V$ with
$$
(k1_H\ot V \ot k1_H)\ot_H (k1_H\ot V \ot k1_H)\subset
T^2_\Ho(\Ho\ot V\ot \Ho)
\, .
$$

\vspace{1ex}

\begin{lemma}
  \label{RewritingSmashProduct}
  There is an isomorphism of graded algebras
\[
  S\# \Ho \cong T_{\! _H}(\Ho\ot V\ot \Ho)/(\R\cup \R').
  \]
  \end{lemma}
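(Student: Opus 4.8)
The plan is to realize the claimed isomorphism through the universal property of the tensor algebra over $\Ho$ and then to produce an explicit inverse, so that the real content reduces to a straightening argument. First I would note that $S\#\Ho$ is a $k$-algebra receiving the algebra map $\Ho\to S\#\Ho$, $h\mapsto 1_S h$, and that the assignment $h\ot v\ot h'\mapsto h\,v\,h'$ (product computed in $S\#\Ho$) is an $\Ho$-bimodule homomorphism $\Ho\ot V\ot\Ho\to S\#\Ho$. By the universal property of $T_{\! _H}(\Ho\ot V\ot\Ho)$ these extend uniquely to a $k$-algebra homomorphism
$$\phi: T_{\! _H}(\Ho\ot V\ot\Ho)\longrightarrow S\#\Ho.$$
Since $V$ sits in degree $1$, $\Ho$ in degree $0$, and these generate $S\#\Ho$, the map $\phi$ is a surjective graded algebra homomorphism.

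Next I would verify $\R\cup\R'\subset\ker\phi$, so that $\phi$ descends to a graded algebra map $\bar\phi: Q\to S\#\Ho$ with $Q:=T_{\! _H}(\Ho\ot V\ot\Ho)/(\R\cup\R')$. For $\R$: under the identification \eqref{eqn:VVWW}, an element $\sum v\ot v'$ of $\R$ maps to $\sum v v'$ computed in $S\subset S\#\Ho$, which vanishes because $\R$ defines $S$. For $\R'$: the generator $h\ot v\ot 1_\Ho-1_\Ho\ot\tau(h\ot v)$ maps to $h\,v-\sum ({}^{h_1}v)\,h_2$, which is exactly $0$ by the smash multiplication $(1_S h)(v\,1_\Ho)=\sum ({}^{h_1}v)\,h_2$. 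To build a candidate inverse I would use $S\#\Ho\cong S\ot\Ho$ as a vector space together with the algebra maps $\theta_S:S\to Q$ and $\theta_\Ho:\Ho\to Q$ induced by $v\mapsto\overline{1_\Ho\ot v\ot 1_\Ho}$ and by the degree-$0$ inclusion, noting $\theta_S$ is well defined since $\R\subset(\R\cup\R')$. Setting $\psi(s\ot h)=\theta_S(s)\,\theta_\Ho(h)$ defines a linear map $\psi:S\#\Ho\to Q$, and checking on generators that $\bar\phi\,\theta_S$ and $\bar\phi\,\theta_\Ho$ are the inclusions of $S$ and $\Ho$ gives $\bar\phi\circ\psi=\id$; hence $\psi$ is injective and $\bar\phi$ surjective.

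The remaining point, and the main obstacle, is surjectivity of $\psi$, i.e. $Q=\theta_S(S)\,\theta_\Ho(\Ho)$. Every element of $Q$ is a combination of monomials $\overline{h_0\ot v_1\ot h_1\ot\cdots\ot v_n\ot h_n}$, and I would show each lies in $\ima\psi$ by pushing the internal factors $h_0,\dots,h_{n-1}$ to the right using the smash relation in the form $\overline{h\ot v\ot 1_\Ho}=\sum\overline{1_\Ho\ot {}^{h_1}v\ot h_2}$, tensored over $\Ho$ with the rest of the word. Applying this to the leftmost non-scalar factor replaces the monomial by a sum of monomials in which that $\Ho$-factor has moved one slot to the right, while ${}^{h_1}v$ stays in $V$ because the action preserves the grading; iterating rewrites every monomial as a sum of terms $\overline{1_\Ho\ot v_1'\ot 1_\Ho\ot\cdots\ot v_n'\ot h}=\theta_S(v_1'\cdots v_n')\,\theta_\Ho(h)\in\ima\psi$. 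The only care required is the Sweedler/coassociativity bookkeeping and a termination measure (for instance the position of the leftmost non-scalar $\Ho$-factor); crucially, since I need only the \emph{existence} of such a normal form and not its uniqueness, no consistency check on the rewriting is needed. With $\psi$ bijective and $\bar\phi\circ\psi=\id$, I conclude $\bar\phi=\psi^{-1}$ is an isomorphism of graded algebras, as claimed.
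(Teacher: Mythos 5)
Your proposal is correct and takes essentially the same route as the paper: the same canonical surjection $T_{\! _H}(\Ho\ot V\ot \Ho)/(\R\cup\R')\to S\#\Ho$ built from the universal property, with the relations checked in the kernel, and the same key step of straightening monomials modulo $(\R')$ by pushing $H$-factors rightward past $V$-factors, so that the quotient is spanned by normal forms $1_H\ot v_1'\ot 1_H\ot\cdots\ot v_n'\ot h$ corresponding to $S\ot\Ho$. The only difference is the final packaging: the paper concludes injectivity by noting the surjection takes these right $\Ho$-module generators to a free basis of $(S\#\Ho)_n$, whereas you conclude via the explicit linear section $\psi$ satisfying $\bar\phi\circ\psi=\id$ together with surjectivity of $\psi$ — an equivalent (and slightly tidier) way to finish the same argument.
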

  \begin{proof}
Extend the vector space map
  $k1_\Ho\ot V\ot 1_\Ho k\rightarrow S\ot H$,
  $1_\Ho\ot v\ot 1_\Ho\mapsto v\ot 1_H$
to an $H$-bimodule map
$H\ot V \ot H \rightarrow S\# H$.
This then extends to a graded algebra homomorphism
$T_{H}(H\ot V\ot H)\rightarrow S\# H$
mapping the degree~$0$ component
isomorphically to the subalgebra $H$ of $S\# H$.
Since $S\# H$ is generated by $V$ and $H$, this algebra
homomorphism is surjective.
Clearly the kernel contains the ideal $(\R\cup \R')$
and we obtain a surjective graded algebra homomorphism
$$
f: T_{\! _H}(H\ot V\ot H)/(\R\cup \R')
\rightarrow S\# H
\, .
$$
We argue the kernel of this map is trivial.

First notice that for $h,h'$ in $H$ and $v$ in $V$, 
$$
1_H\ot \tau(h\ot v)h'
\equiv
h\ot v \ot h'
\equiv
h\ot \tau(\tau^{-1}(v\ot h') )
\equiv
h\tau^{-1}(v\ot h') \ot 1_H 
\quad\text{modulo } (\R')
\, 
$$
and hence
$
k1_H\ot V \ot H
\equiv H\ot V\ot H
\equiv H\ot V \ot k1_H$
modulo $ (\R')$.
Thus modulo $(\R')$,
 $ (T_{H}(W))_n$ is
$$\begin{aligned}
  (H\ot V\ot H) & \ot_H (H\ot V\ot H)\ot_H 
 \cdots \ot_H(H\ot V\ot H) 
  \\  &\equiv
 (k1_H\ot V\ot k1_H) \ot_H   
\cdots\ot_H
(k1_H\ot V\ot k1_H) 
\ot_H (k1_H\ot V\ot H)
\, .
\end{aligned}
$$
We identify this set as a vector space with $V^{\ot n}\ot H$.
Now we take a further quotient by $(\R ' \cup \R)$.
Note that $\R\subset V\ot V$, so $(\R)$ may be identified with
$(\R)_S \ot H$ under the above identification, where 
$(\R)_S$ is the ideal of $T(V)$ generated by $\R$.
Hence after taking this further quotient, 
we obtain the vector space $T(V)/(\R)_S \ot H \cong S\ot H$.   
Thus the degree $n$ component 
$(T_H(H\ot V\ot H)/ (\R\cup\R'))_n$ is a finitely generated
free right $H$-module
with a basis corresponding to a vector space basis
of the degree $n$ component $(T(V)/(\R))_n=(V^{\ot n}+(\R))/(\R)$ of
$S$.
Then $f$ is a homomorphism of free right $H$-modules
of the same rank
taking a basis to a basis and thus is injective.
\end{proof}

\vspace{1ex}

\subsection*{PBW deformations of smash products}
Recall that
we discussed filtered algebras over $H$
and their homogeneous and graded versions
in \cref{sec:deformations}.
Every filtered algebra $\cH$ with associated graded 
algebra $\gr \cH \cong S\# H$ is isomorphic
to a quotient of 
$T_H(H\ot V \ot H)$, so we lose no generality
by viewing $S\# H$ as a quotient of $T_H(H\ot V\ot H)$
using \cref{RewritingSmashProduct}
and then appealing to \cref{defn:strongPBW}.

\vspace{1ex}

  \begin{definition}{\em 
  We say a filtered algebra $\cH$ is a {\em (strong) PBW deformation} of $S\# H$
(over $H$) if $\cH$ is a (strong) PBW deformation of 
$T_{H}(H\ot V \ot H)/(\R' \cup \R)$ as identified with $S\# H$
as a graded algebra.
In this case, 
$\gr \cH \ \cong \ S\# H$ as graded algebras and
 $$\cH \ \cong\ S\ot H\qquad\quad\text{
   as $k$-vector spaces.}
 $$
}
\end{definition}

\vspace{2ex}

For $S$ a Koszul algebra, we call a PBW deformation of $S\# H$ 
 a {\em Hopf-Koszul Hecke algebra}.
 Special cases include the graded affine Hecke algebras defined by 
 Lusztig~\cite{Lusztig88,Lusztig89}
 and the symplectic reflection algebras introduced by
 Drinfeld~\cite{Drinfeld}, then Etingof and Ginzburg~\cite{EG},
for $H$ the group ring of a finite group.
 We note that these cases in the literature are in fact
 strong PBW deformations (see~\cref{defn:strongPBW}).
 See~\cite{EGG05,FlakeSahi,GK07,Khare,KT10,LSS,WW}
 for other examples.

Every PBW deformation $\cH$ of $S\# H$
is a quotient of $T_{H}(H\ot V\ot H)$ by an ideal of
filtered relations
with a choice of ideal generators 
under which the homogeneous version $\cH_{\hh}$
is $T_H(H\ot V \ot H)/(\R' \cup \R)\cong S\# H$.
Thus every PBW deformation $\cH$ is a quotient 
\begin{equation}\label{eqn:Hlab}
   \chabl \ =\ T_{\! _H}(\Ho\ot V \ot \Ho)/ (\P\cup \P')
\end{equation}
with $V$ in degree $1$ and $\Ho$ in degree $0$ for vector spaces
$$
\begin{aligned}
  \P & =  \{ r -\alpha(r)-\beta(r) : r\in \R\} & & \text{\em (deformed
    quadratic relations)\quad and } \\
 \P' & =  \{ r - \lambda(r) : r \in \R' \} & & \text{\em (deformed smash relations)}
\end{aligned}
$$
determined by some $k$-linear parameter functions 
\[
  \alpha: \R\rightarrow \Ho\ot V\ot \Ho , \ \ \
   \beta: \R \rightarrow \Ho , \ \ \
   \lambda: \R'\rightarrow \Ho \, .
 \]

 \vspace{1ex}
 
 \subsection*{Fibers of graded deformations for smash products}
We show how the PBW deformations of smash products (over $H$)
are fibers of graded deformations and vice versa.
Recall that finite dimensional algebras are always doubly Noetherian.

\vspace{1ex}

 \begin{cor}\label{PBW-iff-deformation}
   Let $\Ho$ be a Hopf algebra acting on a finitely
   generated
    quadratic algebra $S$. Then: 
   \begin{itemize}
   \item Every fiber
of a graded deformation of $S\# \Ho$ is isomorphic 
     to a strong PBW deformation of $S\# \Ho$ as a filtered algebra. 
     \item 
   Every strong PBW deformation of $S\#\Ho$ is isomorphic to a fiber of some 
   graded deformation of $S\#\Ho$ as a filtered algebra.
        \item 
          Every PBW deformation of $S\#\Ho$ is a strong PBW deformation
          provided $H$ is 
   doubly Noetherian.
   \end{itemize}
 \end{cor}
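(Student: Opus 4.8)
The plan is to read off all three statements from the general theory of Sections~\ref{sec:DoublyNoetherian} and~\ref{sec:LinearQuadraticAlgs}, applied to the presentation
$$S\# \Ho \cong T_{\! _H}(\Ho\ot V\ot \Ho)/(\R\cup \R')$$
supplied by \cref{RewritingSmashProduct}. Throughout I would set $W=\Ho\ot V\ot \Ho$, a free $\Ho$-bimodule whose $k$-span of a free basis is $V$ (identified with $1_H\ot V\ot 1_H$), and take $\sA=\BBB_{\hh}=T_H(W)/(\R\cup \R')\cong S\# \Ho$ as the homogeneous version, with the grading placing $V$ in degree~$1$ and $\Ho$ in degree~$0$. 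Each bullet is then a direct appeal to one of the three prior results, and almost all the work is in checking that the smash-product data meet their hypotheses.

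For the first bullet I would verify that $\sA=S\#\Ho$ satisfies the hypotheses of \cref{DeformationImpliesPBW}: it is linear-quadratic because the smash relations $\R'\subset \Ho\ot V\ot \Ho=W$ are homogeneous of degree~$1$ while the quadratic relations $\R$ are homogeneous of degree~$2$, and the chosen free basis generates the degree~$2$ relations since $\R\subset V\ot V$ by construction in \cref{quad-relns}. \cref{DeformationImpliesPBW} then yields directly that any fiber $\sA_t|_{t=c}$ of a graded deformation is isomorphic, as a filtered algebra, to a strong PBW deformation of $S\#\Ho$. For the second bullet I would run \cref{PBWImpliesDeformation} in the opposite direction: a strong PBW deformation $\cH$ of $S\#\Ho$ is by definition of the form $T_H(W)/(\P\cup \P')$ whose homogeneous version is $S\#\Ho$, so $\cH$ plays the role of $\BBB$ and its $k[t]$-homogenized version $\cH_t$ is a graded deformation of $S\#\Ho$ having fiber $\cH$ at $t=1$.

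For the third bullet I would invoke \cref{prop:doubly-PBW}, whose two hypotheses I must check. The algebra $\Ho$ is doubly Noetherian by assumption, and each graded component of $S\#\Ho$ is finitely generated as an $\Ho$-bimodule: since $S$ is finitely generated quadratic, every $S_n$ is finite dimensional, and the identification $(S\#\Ho)_n\cong S_n\ot \Ho$ from the proof of \cref{RewritingSmashProduct} exhibits $(S\#\Ho)_n$ as a free right $\Ho$-module of finite rank $\dim_k S_n$, hence finitely generated as an $\Ho$-bimodule. \cref{prop:doubly-PBW} then upgrades any PBW deformation to a strong one.

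I expect no serious obstacle, since the argument is a packaging of the three prior results rather than a new proof. The one point requiring care is the interface between the abstract notion of strong PBW deformation in \cref{defn:strongPBW} and the concrete presentation of $S\#\Ho$: I would use the remark preceding the corollary that every filtered algebra with associated graded isomorphic to $S\#\Ho$ is itself a quotient of $T_H(\Ho\ot V\ot \Ho)$, so that ``PBW deformation of $S\#\Ho$'' and ``PBW deformation of its homogeneous version'' genuinely coincide. The hypothesis most easily overlooked is the one feeding \cref{DeformationImpliesPBW}, namely that the degree~$2$ part of $\R\cup \R'$ lies inside $V\ot V$; in particular I would record explicitly that the linear smash relations $\R'$ contribute nothing in degree~$2$, so that the free basis $V$ indeed generates the degree~$2$ relations.
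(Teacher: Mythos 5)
Your proposal is correct and follows essentially the same route as the paper, which proves the corollary by citing \cref{PBWImpliesDeformation} and \cref{DeformationImpliesPBW} with $W=\Ho\ot V\ot \Ho$, and \cref{prop:doubly-PBW} together with the finite generation of each graded component of $T_H(W)/(\R\cup\R')$ as an $H$-bimodule from the proof of \cref{RewritingSmashProduct}. Your explicit verification of the hypotheses (linear-quadratic form of $\R\cup\R'$, the basis $V$ generating the degree-$2$ relations, and the free right $H$-module structure $(S\#\Ho)_n\cong S_n\ot \Ho$ of finite rank) simply spells out details the paper leaves implicit.
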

 \begin{proof}
The result follows from  \cref{PBWImpliesDeformation,DeformationImpliesPBW}
with $W=H\ot V \ot H$.
For the third statement, we use
\cref{prop:doubly-PBW}
noting that each
graded component of $T_{H}(W)/(\R\cup \R')$
is a finitely generated $H$-bimodule,
see the proof of \cref{RewritingSmashProduct}.
   \end{proof}
 

\section{Hopf actions on Koszul algebras}\label{sec:actions-on-Koszul}

We again take a Hopf algebra $\Ho$ acting on a graded quadratic
algebra $S$ with action preserving the grading, but now 
consider the case when $S$ is a Koszul algebra.
We explain that the Koszul resolution and the bar resolution of the Koszul
algebra
both carry the Hopf action so that we may use chain maps
in later sections to convert homological information
on graded deformations into concrete algebraic conditions
for PBW deformations.


\subsection*{Koszul algebra and Koszul resolution}
Fix $S = T_k(V)/(\R)$ generated by a finite
dimensional vector space $V$ with
{\em quadratic Koszul relations} given as some subspace
$\R\subset V\ot V$,
where $T_k(V)$ is the tensor algebra of $V$ over $k$.
We take the grading on  $T_k(V)$ with $V$ in degree $1$.
We will use the {\em Koszul resolution} $(K_S)\sd=K\sd$ for $S$,
\begin{equation}
  \label{koszulres}
  K\sd\ : \ \ \cdots\longrightarrow K_3\stackrel{d_3}{\longrightarrow}
  K_2 \stackrel{d_2}{\longrightarrow} 
  K_1 \stackrel{d_1}{\longrightarrow}
  K_0 \longrightarrow S\longrightarrow 0
\end{equation}
where $K_0 = S\ot S$, $K_1=S\ot V\ot S$,
$K_2=S\ot \R \ot S$, and 
$K_n=S\ot \tilde K_n\ot S$ for
\begin{equation}\label{Koszulterms}
  \tilde K_n = \bigcap _{j=0}^{n-2} (V^{\ot j}\ot \R 
               \ot V^{\ot (n-2-j)} ) 
               \quad\text{ for } n\geq 2\,
             \end{equation}
with differential given by the differential for the (reduced) bar 
resolution (see~\cref{eqn:bar-diff})
using the canonical inclusion of each 
$K_n$ into $(\BB_S)_n$.
Here
we identify $V$ with the vector subspace $(V+(\R))/(\R)$ of $S$ in
degree~1.
We also identify $\bar S=S/k1_S$ with the $k$-span of the positively
graded
elements of $S$ so that $V\subset \bar S$.
 
\subsection*{Smash products as twisted tensor products}
The smash product algebra
$S\# \Ho$
may be realized as
a twisted tensor product:
$$ S\# \Ho \cong S \ott \Ho $$
for twisting map
$$
\tau: \Ho \ot S \longrightarrow S\ot \Ho
\quad\text{ defined by }\quad
h\ot s \longmapsto \sum\ ^{h_1} s \ot h_2 
$$
for $h\in \Ho$ and $s\in S$ with inverse twisting map (see \cite{TTP-AWEZ})
$$
{}_{}\hspace{-9ex}
\tau^{-1}: S\ot H \longrightarrow H \ot S
\quad\text{ defined by }\quad
s\ot h \longmapsto \ \sum h_2 \ot { ^{\igamma(h_1)} s} \, .
$$

\subsection*{Hopf algebra preserves the Koszul relations}
The action of $\Ho$ on $S$ restricts to an action  
of $\Ho$ on $V$ since  it preserves the grading. 
This extends to an action on $V\ot V$, 
$$\, ^h(v\ot v')=\sum \, ^{h_1}v\ot \, ^{h_2} v'
\quad\text{ for $h$ in $\Ho$ and $v,v'$ in $V$.}
$$
The Hopf algebra $\Ho$ 
must preserve the set of quadratic Koszul relations 
$\R\subset V\ot V$
defining the Koszul algebra $S=T(V)/(\mathcal{R})$
since it preserves the grading on the quotient $S$: 
\begin{equation}
   \label{hopfpreserveskoszulrelations}
  ^h \mathcal{R}
\subset\R\quad \text{ for all }
h\in \Ho\, 
\end{equation}
since
$\, ^h(v\ot v' + (\R))
=\, ^h\big( (v+\R) \cdot (v' + (\R)\big)
= \, ^h(v\ot v')+(\R) \, 
$ 
for  $v$, $v'$ in $V$.

\subsection*{Standard embedding of Koszul into bar resolution}
We use the standard embedding of the Koszul resolution $(K_S)\sd=K\sd$
into the reduced bar resolution $\BB_{S}$ of $S$ and denote this
injective chain map by
\begin{equation}\label{KoszulStandardEmbedding}
  \iota_{_S}: K_S\longrightarrow \BB_S\, .
\end{equation}
(See~\cref{sec:deformations} for details on the bar $\B_S$ and reduced
bar $\BB_S$ resolutions of $S$.)
Note that
\begin{equation}\label{eqn:iotaSn}
  (\iota_{_S})_n: K_n\hooklongrightarrow S\ot V^{\ot n}\ot S
\longtwoheadrightarrow S\ot S^{\ot n} \ot S
=(\B_S)_n \longtwoheadrightarrow (\BB_S)_n
\end{equation}
in degree $n$ is given as $1\ot \tilde{\iota}^{ \ot n}\ot 1$
for $\tilde{\iota}:
V\hookrightarrow T(V)\twoheadrightarrow T(V)/(\mathcal{R})=S
\twoheadrightarrow \bar{S}$.
%


\subsection*{Koszul resolution carries an action of Hopf algebra}
In view of \cref{Koszulterms} and
\cref{hopfpreserveskoszulrelations}, 
each term $K_i$ of the Koszul resolution 
$K\sd=(K_S)\sd$ of $S$ is a left $H$-module 
with action denoted $x\mapsto \, ^hx$
induced from the action of $H$ on $S\ot V^{\ot i}\ot S\supset K_i$
given by 
\begin{equation}\label{HopfActionKoszulResolution}
  ^h(s_1\ot v_2\ot \cdots \ot v_{n+1}\ot s_{n+2})=
\sum \  ^{h_1}s_1\ot\, ^{h_2}v_2\ot\cdots \ot \, ^{h_{n+1}}v_{n+1}\ot 
\, ^{h_{n+2}}s_{n+2}
\, . 
\end{equation}
The Koszul resolution 
$K_S$ of $S$ then 
{\em carries the action of $H$} in the sense of 
\cite{TTP-AWEZ}: 
$$
\, ^{h} (sxs')=\sum\ \,(  ^{h_1}s)\, ( ^{h_2}x)\, ( ^{h_3} s') 
\text{
  for $x$ in $K_i$, $s, s'$ in $S$, and $h$ in $H$.}
$$

\subsection*{Reduced bar resolution of $S$ carries an action of Hopf algebra}
Since the action of $H$ preserves the subspace $k\cdot 1_S$ of $S$, it preserves
the kernel of the projection map $\pr_{\BB_S}: \B_S \rightarrow \BB_S$
given by
$1_S\ot \pr_{\bar S}\ot\cdots\ot  
\pr_{\bar S}\ot 1_S$ 
for 
$\pr_{\bar S}: S\twoheadrightarrow \bar S$. 
Each term of $\BB_S$ is thus also a left $H$-module 
with action denoted $x\mapsto \, ^hx$
given by 
\begin{equation}\label{HopfActionReducedBar}
  ^h(s_1\ot s_2\ot \cdots \ot s_{n+2})=
 \sum \  ^{h_1}s_1\ot \, ^{h_2}s_2\ot\cdots \ot 
\, ^{h_{n+1}}s_{n+2}
\, 
\end{equation}
The reduced bar resolution
$\BB_S$ then also
{\em carries the action of $H$} in the sense of 
\cite{TTP-AWEZ}: 
$$
\, ^{h} (sxs')=\sum\ \, ( ^{h_1}s)\, ( ^{h_2}x)\, ( ^{h_3} s') 
\text{
  for $x$ in $(\BB_S)_i$, $s, s'$ in $S$, and $h$ in $H$. }
$$

\section{Chain maps for Hopf actions on Koszul algebras}\label{sec:spr}

We apply the theory of \cite{TTP-AWEZ} to PBW deformations arising
from a Hopf algebra $H$ acting on a graded Koszul algebra $S$
with action preserving the grading on $S$.
We recall from~\cite{SW-twisted,TTP-AWEZ} the twisted tensor product
construction for resolutions and chain maps between resolutions for
$S\# H$. 
These constructions will be used in the remainder of the paper to obtain
information on deformations.

\subsection*{Converting from 
  reduced to unreduced bar resolutions}
As before, we identify 
$\bar{\Ho}=H/k1_H$ with a subspace of $H$ using a choice of section of
the projection $H\twoheadrightarrow  H/ k1_H$.
We identify $\bar S=S/k1_S$ with the subspace of the graded
algebra $S$
generated by positively graded elements, $\bar S = \oplus_{i>0} \
(V^{\ot i}+(\R))/(\R)$.
Then when working in $A=S\#H\cong S\ot H$ (vector space isomomorphism),
we may identify $\bar H$ and $V$ with vector subspaces of $\bar A\subset A$,
\begin{equation}
  \label{ChoiceOfSection}
  \Ho=\bar{\Ho}\oplus k1_\Ho \subset \bar A,\quad
  S=\bar{S}\oplus k1_\Ho\subset \bar A,
  \quad\text{ and }\quad
  S\# H\cong \ttp=\overline{\ttp}\oplus k(1_{S\ott H})
\end{equation}
after identifying $\bar A$ with the subspace
$(\bar S\ot \bar H)\oplus (\bar S \ot H)\oplus (S\ot 
\bar H)$ of $\ttp$.
Note here that $1_{\ttp}=1_S \ot 1_H$.
This gives embeddings of $\ttpp$-bimodules in each degree:
$$(\BB_\Ho)_n\subset (\B_\Ho)_n,\quad
(\BB_S)_n\subset (\B_S)_n, 
\quad\text{ and }\quad
(\BB_{\ttp})_n \subset (\B_{\ttp})_n
\, .
$$
%

\subsection*{A clarifing liberty with notation}
To avoid cumbersome notation and
clarify arguments,
whenever we are
working in $A\ot A$ or $\bar A \ot \bar A$,
we identify the $k$-vector spaces
\begin{equation}\label{liberty1}
  \text{
  $V\ot \bar H$, \ $\bar H \ot V$, \ $V\ot V$, and $\R$
    with 
  subspaces of $\bar A \ot \bar A \subset A\ot A$}
\end{equation}
for 
$\bar H$ and $V$ identified with 
subspaces of $\bar A\subset A$, see \cref{ChoiceOfSection}.
When working in the bar complexes $\B_A$ and $\BB_A$,
we  identify $A$ with $S\ot H$ as a vector space 
and suppress tensor symbols in $A$  to avoid confusion 
with tensor symbols in the bar complex, writing 
$aa'$ for the product of $a$ and $a'$ in $A$. 
We also suppress extra tensor factors of $1_A$ on the outside
and identify
\begin{equation*} 
 \text{ 
   $V\ot \bar H$, \ $\bar H \ot V$, \ $V\ot V$, and $\R$}
\end{equation*}
    with 
    $k$-vector subspaces of
    $(\BB_A)_2 = A\ot \bar A \ot \bar A \ot A
  \subset A\ot A \ot A \ot A = (\B_A)_2$
via 
\begin{equation}\label{liberty2}
  \begin{aligned}
V\ot \bar H \ &\cong\ k1_A \ot V \ot \bar H \ot
k1_A 
&\subset\ &(\BB_A)_2 \subset (\B_A)_2\, ,
\\
\bar H\ot V \ &\cong\  k1_A \ot \bar H \ot V \ot k1_A 
&\subset\ &(\BB_A)_2 \subset (\B_A)_2\, ,
\\
V\ot V \ &\cong\ k1_A \ot V \ot V \ot k1_A 
&\subset\ &(\BB_A)_2  \subset (\B_A)_2
\, .
\end{aligned}
\end{equation}
For example, we write $v\ot w$ for $1_A\ot (v\ot 1_H)\ot (w\ot
1_H)\ot 1_A$ in $\BB_A$ with $v$, $w$ in $V$.
When working in $\B_A$, the set of smash relations (see
\cref{smash-relns})
\begin{equation*}
   \R' = \Span_k \{ h\ot v\ot 1_H - 1_H\ot \tau(h\ot v)  : 
   h\in \bar H, v\in V  \} \subset \Ho\ot V\ot \Ho 
 \end{equation*}
is identified as a $k$-vector space with 
 \begin{equation}\label{liberty3}
   \R' = \Span_k \{ h\ot v - \tau(h\ot v)  : 
   h\in \bar H, v\in V  \} \subset (H\ot V)\oplus (V\ot H) 
  \subset (\B_A)_2 
   \, . 
 \end{equation}
 When working in $\BB_A$,
 we identify $\R'$ in $(\B_A)_2$
 with $\pr_{\BB_A} \R'$ in $(\BB_A)_2$, the image under the projection map
 $\pr_{\BB_A}:\B_A\rightarrow \BB_A$,
 so that
 \begin{equation}\label{liberty4}
   \R' = \Span_k \{ h\ot v- (1\ot \pr_{\bar H})\, \tau(h\ot v)  : 
   h\in \bar{\Ho}, v\in V  \} \subset (\bar H\ot V)\oplus (V\ot \bar
   H)   \subset (\BB_A)_2
 \end{equation}
 for projection map $\pr_{\bar H}: H\rightarrow \bar H$.
 Note that ``identify'' is appropriate terminology here
 as the projection map $\pr_{\BB_A}$ is injective on $\R'\subset \B_A$.

\subsection*{Twisting two resolutions}
We review the construction of the {\em twisted product resolution} obtained by twisting
the reduced bar or Koszul resolution of $S$
with the reduced bar resolution of the Hopf algebra $\Ho$,
see~\cite{SW-twisted,TTP-AWEZ}. 
For resolutions 
$$
\begin{aligned}
  C\sd:\quad \cdots \longrightarrow & \ \, C_2 \longrightarrow
  C_1 \longrightarrow C_0 \longrightarrow 0
\ \ \text{ of } S \text{ as an $S$-bimodule and}\\
D \sd:\quad \cdots \longrightarrow & \ D_2 \longrightarrow D_1 \longrightarrow D_0 \longrightarrow 0
\ \ \text{ of } H \text{ as an $H$-bimodule}\, 
\end{aligned}
$$
with $D\sd=\BB_H$  and either $C\sd=K_S$ or $C\sd=\BB_S$,
we may form
the twisted product resolution $C\sd \ott D\sd$
(see \cite{SW-twisted,TTP-AWEZ})
as 
the total complex $(C\ott D)_{\sd}$ of the double complex 
$C\sd\ot D\sd$, 
\begin{equation}\label{eqn:res-X}
 (C\sd\ot D\sd)_n = \bigoplus_{i+j=n} C_i \ot D_j\,
  ,
\end{equation}
with $(S\# \Ho)$-bimodule structure on each $C_i \ott D_j$ 
defined as follows~\cite[Example 10.8]{TTP-AWEZ}.
The reduced bar resolution $D\sd = \BB_H$ of $H$
is a left $H$-comodule via the map $\rho : \BB_H \rightarrow H\ot \BB_H$
given in each degree~$n$ by 
\[
  \rho (h^0\ot h^1\ot\cdots \ot h^{n+1} ) = 
   \sum (h_1^0h_1^1\cdots  h_1^{n+1} ) \ot h_2^0 \ot h_2^1\ot 
    \cdots\ot h_2^{n+1}
\]
for $h^i\in \bar{H}$ ($1\leq i\leq n$)
and $h^0, h^{n+1}\in H$.
Here, we suppress notation for projection $\pr_{\bar H}$
applied to  $h^1_2,\ldots, h^n_2$
which is required 
as the coproduct on $H$
does not necessarily take $\bar H$ to $\bar H \ot \bar H$.
It can be checked that 
the properties of a left comodule are satisfied due to 
coassociativity 
of $\Delta$, compatibility of $\Delta$ with multiplication,
and the counit property
by invoking a choice of internal direct sum of vector spaces
$H = \bar H\oplus k 1_H$.
This comodule structure gives rise to the left and right actions of $S$ and $H$
on each $C_i\ot_{\tau} \BB_H$:
\begin{equation}
  \label{TwistedResolutionAction} 
  \begin{aligned}
   s \cdot (x\ot y)\cdot h
   \ &= \    s x\ot y h,
\\  h \cdot (x\ot y)\cdot s 
   \ &= \
   \sum\ ( {}^{h_1}x) \ ( {}^{h_2y_1}s) \ot 
   h_3 y_2  
\quad\text{ for }\ \ 
h\in \Ho,\ x\in C , \ y\in \BB_H,\ s\in S\, ,
\end{aligned}
\end{equation}
after identifying $h$ with $1_S\ot h$ and $s$ with $s\ot 1_H$
in $S\#H$ for comodule structure on $\BB_H$ denoted briefly by
$y\mapsto \sum y_1\ot y_2$.
Then $(C\ott \BB_H)\sd$ augmented by $S\# \Ho$
is an exact sequence of $(S\# \Ho)$-bimodules,
see~\cite[Lemma~3.5]{SW-twisted} and~\cite{TTP-AWEZ}.

\subsection*{Twisted product of Koszul and bar resolutions}
The twisted product resolution
$X\sd=K_S\ott \BB_H$ 
combining the Koszul resolution $K_S$ of $S$ with the reduced
bar resolution $\BB_\Ho$ of $\Ho$
has $n$-th term
\begin{equation}\label{eqn:xij}
  X_n=\bigoplus_{i+j=n}X_{i,j}
  \qquad\text{ for } X_{i,j} = (K_S)_i\ot (\Ho\ot\bar{\Ho}^{\ot j}\ot \Ho) 
  \qquad i,j\geq 0\, . 
\end{equation}


\begin{remark}\label{LibertyOnX}
  {\em 
Note that $X\sd$ is a {\em free} $(S\# \Ho)$-bimodule resolution of $S\# \Ho$:
$K_S$ is a free $S^e$-module in each degree and $\BB_H$ is a free
$\Ho ^e$-module in each degree, and it can be shown directly that $X_{i,j}$
is a free $(S\# \Ho)$-bimodule 
(with basis given by tensoring free basis elements of $(K_S)_i$ with those
of $(\BB_H)_j)$.
See~\cite[Section 9]{TTP-AWEZ}:
The twisting map and its inverse may be used
to provide an isomorphism of vector spaces
\[
 (K_S\ot \BB_H)_n \ \cong \  \bigoplus_{\ell + i =n}
   (S\# H) \ot \left( \cap_{j=0}^{\ell -2}  (V^{\ot j}\ot \R\ot V^{\ot (\ell -2 -j)}
   \right) \ot \bar{H}^{\ot i}\ot (S\# H)
\]
compatible with the $(S\# H)$-bimodule structures.
}
\end{remark}

\subsection*{Chain maps for Hopf algebras acting on Koszul algebras}
Twisted Alexander-Whitney and Eilenberg-Zilber maps
were used in~\cite{TTP-AWEZ} to produce chain maps between
resolutions as stated in the following theorem: 

 \begin{thm} \cite[Theorem 11.8]{TTP-AWEZ}\label{HopfKoszulConversion}
   Let $X\sd$
be the twisted product resolution for 
$S\# \Ho$ twisting the Koszul resolution $K_S$ of $S$
with the reduced bar resolution
$\BB_\Ho$ of the Hopf algebra $\Ho$.
There are $(S\# \Ho)$-bimodule graded chain maps
$$
\pi: \BB_{S\#\Ho}   \longrightarrow X\sd 
\quad\text{and}\quad 
\iota: X\sd \longrightarrow  \BB_{S\#\Ho}
$$
satisfying 
$
\pi \, \iota 
= 1_{X}\, . 
$
\end{thm}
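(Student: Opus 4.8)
The plan is to build both maps as two-step composites, factoring through the intermediate twisted product $\BB_S \ott \BB_\Ho$ of the two \emph{reduced bar} resolutions and passing to the Koszul resolution only at the end. Since $S\# \Ho \cong S \ott \Ho$ is a twisted tensor product, the twisted Alexander--Whitney and Eilenberg--Zilber machinery of \cite{TTP-AWEZ} applies to the bar resolutions directly, furnishing $(S\# \Ho)$-bimodule chain maps
$$
\AWt: \BB_{S\#\Ho} \longrightarrow \BB_S \ott \BB_\Ho
\qquad\text{and}\qquad
\EZt: \BB_S \ott \BB_\Ho \longrightarrow \BB_{S\#\Ho}
$$
with $\AWt\,\EZt = 1$, exactly as in the classical setting where the Alexander--Whitney map is a strict left inverse to the shuffle map. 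I would take this one-sided inverse, together with the fact that both maps preserve the internal grading (with $\Ho$ in degree $0$), as the key input.

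Next I would replace $\BB_S$ by the Koszul resolution. The standard embedding $\iota_{_S}\colon K_S \to \BB_S$ of \cref{KoszulStandardEmbedding} is an $\Ho$-equivariant chain map, and I would pair it with an $\Ho$-equivariant chain retraction $\pi_{_S}\colon \BB_S \to K_S$; these are the usual comparison maps between the Koszul and bar resolutions of a Koszul algebra, and $\pi_{_S}\,\iota_{_S} = 1_{K_S}$ holds \emph{strictly} because $\pi_{_S}\,\iota_{_S}$ is a chain endomorphism of the minimal resolution $K_S$ lifting $1_S$. The essential point is that $\iota_{_S}$ and $\pi_{_S}$ are equivariant for the compatible Hopf actions carried by $K_S$ and $\BB_S$ (\cref{HopfActionKoszulResolution,HopfActionReducedBar}); because the bimodule structure of the twisted product in \cref{TwistedResolutionAction} is built from exactly this action through the comodule structure on $\BB_\Ho$, tensoring on first factors yields well-defined $(S\# \Ho)$-bimodule chain maps
$$
\iota_{_S}\ot 1\colon K_S \ott \BB_\Ho \longrightarrow \BB_S \ott \BB_\Ho,
\qquad
\pi_{_S}\ot 1\colon \BB_S \ott \BB_\Ho \longrightarrow K_S \ott \BB_\Ho,
$$
with $(\pi_{_S}\ot 1)(\iota_{_S}\ot 1) = (\pi_{_S}\iota_{_S})\ot 1 = 1_X$.

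I would then set
$$
\pi = (\pi_{_S}\ot 1)\circ \AWt\colon \BB_{S\#\Ho}\longrightarrow X\sd,
\qquad
\iota = \EZt\circ(\iota_{_S}\ot 1)\colon X\sd\longrightarrow \BB_{S\#\Ho},
$$
both $(S\# \Ho)$-bimodule graded chain maps as composites of such. The required identity is then a formal cancellation,
$$
\pi\,\iota = (\pi_{_S}\ot 1)\,\AWt\,\EZt\,(\iota_{_S}\ot 1)
= (\pi_{_S}\ot 1)(\iota_{_S}\ot 1) = 1_X,
$$
using $\AWt\,\EZt = 1$ and $\pi_{_S}\iota_{_S} = 1_{K_S}$.

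The hard part is not this cancellation but the two compatibility inputs feeding it. The first is establishing that the twisted $\AWt$ and $\EZt$ are genuine $(S\# \Ho)$-bimodule maps satisfying $\AWt\,\EZt = 1$ in the twisted setting --- this is the technical heart of \cite{TTP-AWEZ}, where the twisting map $\tau$ must be threaded carefully through the differentials and through the left comodule structure $\rho$ on $\BB_\Ho$. The second is verifying that $\iota_{_S}\ot 1$ and $\pi_{_S}\ot 1$ respect the bimodule structure of \cref{TwistedResolutionAction}, which reduces to the $\Ho$-equivariance of the Koszul comparison maps and the fact that such an equivariant retraction can be chosen at all. Once these are in hand, the chain-map property and gradedness pass automatically to the composites $\pi$ and $\iota$.
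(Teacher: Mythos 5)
Your overall architecture---$\iota=\EZt\circ(\iota_{_S}\ot 1)$, $\pi=(\text{retraction})\circ\AWt$, and the formal cancellation $\pi\,\iota=1_X$ from $\AWt\,\EZt=1$ together with strictness of the retraction---is exactly the construction behind the cited result (\cite[Theorem 11.8]{TTP-AWEZ}, recapitulated in \cref{AppendixValuesLowDegree}). The gap is in how you produce the retraction onto $X$. You take it of the form $\pi_{_S}\ot 1$ for an $H$-equivariant chain retraction $\pi_{_S}\colon\BB_S\to K_S$ with $\pi_{_S}\,\iota_{_S}=1$, and you treat the existence of such a $\pi_{_S}$ as standard. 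It is not. Your minimality argument addresses strictness only; equivariance is precisely where the standard comparison-theorem construction breaks down. To build $\pi_{_S}$ inductively as a chain map in the category of $S$-bimodules carrying the $H$-action of \cref{HopfActionKoszulResolution,HopfActionReducedBar}, the lifting step at degree $n$ requires $(\BB_S)_n$ to be projective in that equivariant category, which amounts to projectivity of $\bar S^{\ot n}$ as an $H$-module (alternatively, one needs the embedding $\iota_{_S}$ to split equivariantly, e.g.\ an $H$-stable complement of $\widetilde K_n$ inside $\bar S^{\ot n}$, such as an $H$-stable complement of $\R$ in $V\ot V$ when $n=2$). None of this is available for a general Hopf algebra: it holds when $H$ is semisimple, but the paper makes no such hypothesis, and for non-semisimple $H$ acting nontrivially these projectivity/splitting statements fail.

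This is exactly the difficulty the paper's argument is designed to sidestep. There, the retraction $\pi_{_{S,H}}\colon\BB_S\ott\BB_H\to X$ is constructed directly as an $(S\#\Ho)$-bimodule chain map, \emph{not} as $\pi_{_S}\ot 1$: by \cref{LibertyOnMiddleResolution} (proof of \cref{PiInLowDegree}), $X_{i,j}\cong A\ot(\widetilde K_S)_i\ot\bar H^{\ot j}\ot A$ and $(\BB_S\ott\BB_H)_{i,j}\cong A\ot\bar S^{\ot i}\ot\bar H^{\ot j}\ot A$ are free $A^e$-modules compatibly with $\iota_{_S}\ot\iota_{_H}$, so a mere \emph{vector-space} complement of $(\widetilde K_S)_i\ot\bar H^{\ot j}$ in $\bar S^{\ot i}\ot\bar H^{\ot j}$ yields a free $A^e$-module complement of the image of $\iota_{_S}\ot\iota_{_H}$, and then \cite[Lemma 9.1]{TTP-AWEZ} (or \cite[Corollary 11.10]{TTP-AWEZ}) produces the retraction. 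The point is that over $A^e$ freeness costs nothing---$A\ot U\ot A$ is free for \emph{any} vector space $U$---whereas your route needs honest projectivity in the $H$-equivariant category, a genuine extra hypothesis. Your proof is repaired by replacing $\pi_{_S}\ot 1$ with a retraction obtained from this $A^e$-level splitting; the rest of your argument (bimodule compatibility via \cref{TwistedResolutionAction}, gradedness, and the final cancellation) then goes through as in the paper.
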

We give some needed properties of these maps $\iota$ and $\pi$ in
\cref{AppendixValuesLowDegree}.
We will use these maps to
generalize 
\cite[Theorem 2.5]{SW-Koszul}
and describe all filtered PBW deformations of $S\# \Ho$.

\section{PBW deformations of Hopf actions on Koszul algebras}
\label{sec:ParametersToCochains}
We again consider a Hopf algebra $\Ho$ acting on a graded Koszul 
algebra $S$.
In the next few sections, we 
simultaneously generalize some of the results of 
\cite{Khare}, \cite{SW-Koszul}, and \cite{WW}.

\subsection*{Parameters for PBW deformations of Hopf actions
on Koszul algebras}
We consider PBW deformations of smash products $S\# \Ho$ modeled on Drinfeld
Hecke algebras and Lusztig's affine graded Hecke algebras.
Recall that $\R$ is the space of Koszul relations defining $S=T(V)/(\R)$ and $\R'$ is the space of smash relations (see \cref{smash-relns}),
  $$  \R' = \Span_k \{ h\ot v\ot 1_H - 1_H\ot \tau(h\ot v)  : 
   h\in \bar\Ho, v\in V  \} \subset \Ho\ot V\ot \Ho  \, .
$$
Set $A = S\# H \cong T_H(H\ot V\ot H)/(\R \cup \R')$
(see \cref{RewritingSmashProduct}).

Any PBW deformation of $A=S\# H$ (over $H$) must have the form of a 
quotient algebra
\begin{equation}\label{eqn:Hlab2}
   \chabl = T_{\! _H}(\Ho\ot V \ot \Ho)/ (\P\cup \P')
\end{equation}
(as in \cref{eqn:Hlab})
for vector spaces
$$
\begin{aligned}
  \P & =  \{ r -\alpha(r)-\beta(r) : r\in \R\} & & \text{(deformed
    Koszul relations)}\\
 \P' & =  \{ r - \lambda(r) : r\in \R' \} & & \text{(deformed smash relations)}
\end{aligned}
$$
determined by $k$-linear parameter functions 
\[
  \alpha: \R\longrightarrow H\ot V\ot H , \ \ \
   \beta: \R \longrightarrow H , \ \ \
   \lambda: \R'\longrightarrow H .
 \]
 We may and do replace $\alpha$,
without changing $\chabl$, 
 by a parameter function
 \begin{equation}\label{ReplaceParameter}
\alpha:  \R\longrightarrow V\ot \Ho \ \ \
\end{equation}
with codomain $V\ot \Ho$ identified with the subspace $k1_H\ot V\ot \Ho\subset 
\Ho\ot V\ot \Ho$;
see \cref{ChoiceParameterCodomain} in 
\cref{AppendixParameterCodomain}.
Note that one could instead use 
     a parameter function $\R\rightarrow H\ot V$
without changing $\chabl$; again, see
  \cref{AppendixParameterCodomain}.
   
     \subsection*{Identifying parameter functions with cochains}
     \label{ParametersAsCochains}
 We use the $k$-linear functions 
\[
   \alpha: \R \longrightarrow V\ot \Ho, \ \ \
    \beta: \R\longrightarrow \Ho, \ \ \
    \lambda: \R'\longrightarrow \Ho
  \]
to define $2$-cochains on the twisted product resolution
  $X\sd=K_S\ott\BB_\Ho$
  of
  $\A=S\# \Ho$,
\begin{equation}\label{CochainParameters}
   \alphax: X_2 \longrightarrow \A, \ \ \
    \betax: X_2\longrightarrow \A, \ \ \
    \lambdax: X_2\longrightarrow \A
\end{equation}
  i.e.,  $\A$-bimodule homomorphisms
  $X_2=X_{0,2}\oplus X_{1,1}\oplus X_{2,0}\rightarrow \A$
   (see \cref{eqn:xij}).

The parameter functions $\alpha,\beta$ extend to 
 $\A$-bimodule homomorphisms on 
$$X_{2,0}=(S\ot \R \ot S) 
\ot (\Ho\ot\Ho)
\cong \A\ot \R\ot \A
$$
(an isomorphism of 
$\A$-bimodules given by iterations of the inverse
of the twisting map $\tau$)
and hence
define the $2$-cochains on $X_{2,0}$
\begin{equation}\label{CochainAlpha}
 \begin{aligned}
   &\alphax:  X_{2,0}\longrightarrow \A,
   \qquad
 &1_S\ot r\ot 1_S
 \ot
 1_H\ot 1_H
 \longmapsto \alpha(r) \in \A, \\
 &\betax:  X_{2,0}\longrightarrow \A,
 \qquad
 &1_S\ot r\ot 1_S
 \ot
 1_H\ot 1_H \longmapsto \beta(r)\in \A
 \,  
 \end{aligned}
 \end{equation}
after identifying the codomain of the parameter functions $\alpha$, $\beta$,  
and $\lambda$ 
with subspaces of $A$ using the vector space
isomorphism $A=S\# H\cong S\ot H$ and identifying
$V$ with $V\ot k1_H$ and $H$ with $k1_S\ot H$.
These maps extend uniquely to 2-cochains 
 $$
 \alphax:  X_2\longrightarrow \A
 \quad\text{ and }\quad 
 \betax:  X_2\longrightarrow \A 
 \qquad\text{ 
 with $\alphax\equiv 0 \equiv \betax$ on $X_{1,1}$ and 
 $X_{0,2}$} . 
$$

The parameter function $\lambda$ similarly defines
an $A$-bimodule homomorphism on 
$$
X_{1,1}
=(S \ot V\ot S) \ot (\Ho\ot \bar H\ot \Ho)
\cong
A\ot V\ot \bar H \ot A
$$
(again, an  isomorphism of 
$A$-bimodules given by iterations of the twisting map $\tau$)
and hence defines a $2$-cocycle on $X_{1,1}$
\begin{equation}\label{CochainLambda}
 \begin{aligned}
\lambdax: X_{1,1} &\longrightarrow A,
\\ \ \ 
(1_S\ot v\ot 1_S)
\ot
(1_H\ot h\ot 1_H)
&\longmapsto
\lambda\big(1_H\ot v\ot h -\tau^{-1}(v\ot h)\ot 1_H\big)
\in H
\, ,
\end{aligned}
\end{equation}
for $h$ in $\bar H$ and $v$ in $V$, which extends uniquely to a $2$-cochain on $X$
$$
 \lambdax:  X_2\longrightarrow \A 
 \qquad\text{ 
 with $\lambdax\equiv 0$ on $X_{2,0}$ and 
 $X_{0,2}$}
\, .
 $$

 \subsection*{Gerstenhaber bracket on twisted product resolution}
\label{subsec:Gb} 
We consider the Gerstenhaber bracket of $2$-cochains
which induces a graded Lie bracket on
Hochschild cohomology,
expressed in terms of the bar complex of $A=S\# H$:
$$
[\mu,\mu']_{_{\BB_{\! A } }}\  :=\  \mu\circ \mu' + \mu'\circ \mu
\qquad\text{ for $2$-cochains } \mu, \mu' \text{ on } \BB_A
$$
for circle product $\circ$ defined by
\begin{equation}\label{circle-prod-defn}
  (\mu\circ\mu' )( 1\ot x\ot y\ot z \ot 1) 
  = \mu(1\ot\mu'(x\ot y) \ot z\ot 1) -
  \mu (1\ot x\ot \mu'(y\ot z)\ot 1)
\end{equation}
for $x,y,z$ in $A$
(see \cite{GerstenhaberSchack})
for $\mu$ extended to $\B_A$ by composing with
$\pr_{\BB_A}:\B_A\rightarrow \BB_A$.
This Gerstenhaber bracket on $\BB_A$ defines a bracket operation
on the resolution $X$
using chain maps $$\iota :X\longrightarrow \BB_A
\qquad\text{ and }\qquad
\pi : \BB_A\longrightarrow X$$ as in  
\cref{HopfKoszulConversion}: we simply send
input in $X$ to
$\BB_A$ using $\iota$, compute  this 
traditional Gerstenhaber bracket of functions on $\BB_A$, 
and then lift the result back to $X$ with $\pi$:
\begin{equation}\label{eqn:Xbracket}
[\eta, \eta']_{\X} = \iota^*[\, \pi^*\eta,\ \pi^*\eta'\ ]_{_{\BB_{\! A}}}
\qquad\text{ for any $2$-cochains } \eta, \eta' \text{ on } X
\, .
\end{equation}

\section{Three cohomological conditions from PBW deformations}
\label{sec:ThreeFromPBW}

We again take a Hopf algebra $\Ho$ acting on a graded Koszul 
algebra $S$ and
consider filtered algebras with homogeneous version $S\# H$.
Here we generalize one implication 
of~\cite[Theorem 5.3]{SW-Koszul}
connecting PBW quotients (over $H$)
to deformations.  We generalize the other implication in the
next section.  We consider equality of
cochains,
not just equality of cohomology classes,
on the twisted product resolution $X=K_S\ott \BB_\Ho$
formed from the Koszul resolution 
  $K_S$ of $S$ and the reduced bar resolution $\BB_\Ho$
  of $H$  (see \cref{eqn:xij}).
 We work with
the filtered algebra $\chabl$ defined in 
\cref{eqn:Hlab2} by some set of parameter functions 
$\alpha,\beta,\lambda$
which define cochains $\alphax, \betax, \lambdax$ on $X$, 
see \cref{CochainAlpha,CochainLambda}.

Recall from \cref{PBW-iff-deformation} that if
$H$ is doubly Noetherian, for example when $H$ is finite dimensional, 
then every PBW deformation
of $S\# H$ is a strong PBW deformation.
In that case, 
the word ``strong" can be removed from 
the hypothesis of the following theorem. 

\vspace{2ex}

\begin{thm}\label{ForwardsDirection}
Any strong PBW deformation of $S\#H$ is 
$\chabl$ for 
some parameters $\alpha,\beta,\lambda$, 
with
   \begin{itemize}
    \item[(a)]\ \ $d^*(\alphax+\lambdax) = 0$, 
    \item[(b)]\ \ $[\alphax+\lambdax,\ \alphax + \lambdax]=2d^*\betax$, 
    \item[(c)]\ \ $[\lambdax + \alphax,\ \betax]= 0$\, 
  \end{itemize}
as cochains, for bracket $[\ ,\ ]=[\ ,  \ ]_{\X}$ on the twisted product resolution 
$X=K_S\ott \BB_\Ho$.
\end{thm}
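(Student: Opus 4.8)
The plan is to reduce the three cochain identities to the standard associativity (Gerstenhaber) equations for a graded deformation on the bar complex of $\A=\sm$, and then transport them to the twisted product resolution $X=K_S\ott\BB_\Ho$ along the chain maps of \cref{HopfKoszulConversion}. First I would invoke \cref{PBW-iff-deformation} (which rests on \cref{PBWImpliesDeformation}): the strong PBW deformation $\chabl$ is the fiber at $t=1$ of a graded deformation $\BBB_t$ of $\A$, whose multiplication $a\star b = ab+\sum_{i\geq1}\mu_i(a\ot b)\,t^i$ has $k$-linear maps $\mu_i$ of degree $-i$. Associativity of $\star$ is equivalent to Gerstenhaber's tower of cochain identities on the reduced bar resolution $\BB_\A$:
\[
d^{*}\mu_1=0,\qquad [\mu_1,\mu_1]_{\BB_A}=2\,d^{*}\mu_2,\qquad [\mu_1,\mu_2]_{\BB_A}=d^{*}\mu_3 .
\]

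Next I would match the parameter cochains with pullbacks of $\mu_1,\mu_2$. Reading $\mu_1,\mu_2$ directly off the homogenized defining relations of $\BBB_t$ shows that on a quadratic relation $r\in\R$ one has $\mu_1(r)=\alpha(r)$ and $\mu_2(r)=\beta(r)$ (the degree-zero term vanishes as $r$ is a Koszul relation), that $\mu_1$ restricts to $\lambda$ on $\R'$, and that grading forces $\mu_i$ to vanish on all inputs of degree $\leq2$ for $i\geq3$. Combined with the explicit low-degree values of $\iota\colon X\to\BB_\A$ recorded in \cref{AppendixValuesLowDegree} and the cochain definitions \cref{CochainAlpha,CochainLambda}, this yields the cochain identities on $X_2$,
\[
\iota^{*}\mu_1=\alphax+\lambdax,\qquad \iota^{*}\mu_2=\betax,\qquad \iota^{*}\mu_i=0\ \ (i\geq3).
\]
The $X_{1,1}$ component is the delicate one, since the twist $\tau^{-1}(v\ot h)$ appearing in \cref{CochainLambda} must be reconciled with the value of $\iota$ on smash-relation generators; the $X_{0,2}$ component vanishes because the deformation leaves the subalgebra $\Ho$ undeformed.

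With these identifications, the chain-map property $\iota^{*}d^{*}=d^{*}\iota^{*}$ gives condition (a) at once: $d^{*}(\alphax+\lambdax)=d^{*}\iota^{*}\mu_1=\iota^{*}d^{*}\mu_1=0$. For (b) and (c) I would apply $\iota^{*}$ to the two bracket equations above and use $\iota^{*}\mu_2=\betax$ together with $\iota^{*}\mu_3=0$ to obtain $\iota^{*}[\mu_1,\mu_1]_{\BB_A}=2\,d^{*}\betax$ and $\iota^{*}[\mu_1,\mu_2]_{\BB_A}=d^{*}\iota^{*}\mu_3=0$.

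What remains, and what I expect to be the main obstacle, is the strictly cochain-level identification of the transported bracket with the pulled-back bracket, namely $[\iota^{*}\mu,\iota^{*}\mu']_{\X}=\iota^{*}[\mu,\mu']_{\BB_A}$ for $\mu,\mu'\in\{\mu_1,\mu_2\}$. By \cref{eqn:Xbracket} the left-hand side is $\iota^{*}[\pi^{*}\iota^{*}\mu,\pi^{*}\iota^{*}\mu']_{\BB_A}$, so writing $P=\iota\pi$ — an idempotent chain map with $P\iota=\iota$, since $\pi\iota=1_X$ — the claim becomes $[\mu\circ P,\mu'\circ P]_{\BB_A}\circ\iota=[\mu,\mu']_{\BB_A}\circ\iota$ on $X_3$. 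The circle product \cref{circle-prod-defn} inserts outputs into the interior of bar elements, so $P$ does not pass through it formally, and a naive homotopy argument would yield equality only in cohomology. The plan is to upgrade this to an equality of cochains using the explicit structure of the twisted Alexander--Whitney and Eilenberg--Zilber maps of \cite{TTP-AWEZ} — in particular the relation $P\iota=\iota$ together with their compatibility with the relevant (co)products — and the narrow bidegree support of $\mu_1,\mu_2$, so that every circle-product term not already of the form $\mu\circ\iota$ either vanishes or cancels when evaluated against $\iota(X_3)$. Carrying this out over the decomposition $X_3=X_{3,0}\oplus X_{2,1}\oplus X_{1,2}\oplus X_{0,3}$ is the computationally heaviest and most delicate step.
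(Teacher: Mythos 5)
Your outline follows the paper's proof skeleton: pass from the strong PBW deformation to a graded deformation via \cref{PBWImpliesDeformation}, read off $\iota^*\mu_1=\alphax+\lambdax$ and $\iota^*\mu_2=\betax$ on $X_2$ from the homogenized relations together with the low-degree values of $\iota$ from \cref{ExplicitValues}, and deduce (a) from $d^*\iota^*=\iota^*d^*$. You have also correctly isolated the crux: since the bracket on $X$ is defined as $\iota^*[\pi^*(\cdot),\pi^*(\cdot)]_{\BB_A}$ and $\pi^*\iota^*\mu_1\neq\mu_1$ globally, conditions (b) and (c) are not formal consequences of the Gerstenhaber identities for $\mu_1,\mu_2,\mu_3$.

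However, at exactly that point your proposal is a plan rather than a proof, and the plan does not contain the mechanism that makes it work. Nothing ``vanishes or cancels'' in the problematic terms: what the paper actually does is replace \emph{both} occurrences of $\pi^*\iota^*\mu_1$ in the circle products by $\mu_1$, using $\pi\iota=1_X$, which gives $\pi^*\iota^*\mu=\mu$ on $\Ima\iota$ (\cref{KeyObservation}). The inner replacement follows from \cref{ExplicitValues}, because the relevant tensor factors of $\iota(X_{2,1})$ and $\iota(X_{3,0})$ lie in the bimodule spans of $\R$ and $\R'$. The outer replacement is the real content, and it needs an idea your sketch does not supply: one must show that the image of $\mu_1\ot 1-1\ot\mu_1$ on $\iota(X_3)$ lands back inside $\Ima\iota_2$. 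The paper gets this from (i) the identity $m(\mu_1\ot 1-1\ot\mu_1)=\mu_1(m\ot 1-1\ot m)$, valid precisely because $\mu_1$ is a $2$-cocycle (\cref{CocylesAndMultMaps}); (ii) the structural fact that a suitable lift of $\iota(X_3)$ to the unreduced bar complex is annihilated by $m\ot 1-1\ot m$, so that image lies in $\ker m$, the bimodule span of $\R\oplus\R'$; and (iii) a degree count using $\deg\mu_1=-1$ to pin the image inside $(\R')$, hence inside $\Ima\iota_2$. An appeal to ``compatibility of the AW/EZ maps with (co)products'' cannot substitute for this: $\iota\pi$ is only homotopic to the identity, and the statement needed is strictly at the cochain level. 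Separately, for (c) the paper's route is much lighter than yours: $[\alphax+\lambdax,\betax]$ is cohomologous to $\iota^*[\mu_1,\mu_2]$, hence is a coboundary $d^*\xi$ with $\xi$ a $2$-cochain on $X_2$ of graded degree $-3$; the only such cochain is $0$, so the bracket vanishes. Your version, via $\iota^*[\mu_1,\mu_2]=d^*\iota^*\mu_3=0$, would force you to run the heavy cochain-level bracket comparison a second time for the mixed bracket, which the degree argument renders unnecessary.
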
 

\vspace{1ex}

\begin{proof}
We will make heavy use of explicit chain maps 
constructed in~\cite{TTP-AWEZ},
the twisted tensor product analogs of
Alexander-Whitney and Eilenberg-Zilber maps. 

  Recall we write $A=S\# H$ as $T_H(H\ot V\ot H)/(\R \cup \R')$, 
  see \cref{quad-relns}, \cref{smash-relns}, and \cref{RewritingSmashProduct}.
  We again identify $V$ with $k1_H\ot V \ot k1_H\subset H\ot V\ot H$
  and view $\R\subset V \ot V$ as a subspace of $T_H(H\ot V\ot H)$
  in degree $2$.  Likewise, we identify the image of $\alpha$
  with a subspace of $k1_H\ot V\ot H\subset H\ot V \ot H$.
  As a strong PBW deformation of $\A$,
  the algebra $\cH$ can be written as a quotient algebra $\chabl$ for some
   parameters $\alpha,\beta,\lambda$
  (see \cref{sec:smashproducts}), 
  and
  \cref{PBWImpliesDeformation} implies that
$$
\begin{aligned} 
\A_t=(\chabl)_t &=T_{\! _H}(H\ot V\ot H)[t]/( r- \alpha(r)t-\beta(r)t^2, \
r'-\lambda(r')t: r\in \R, r'\in \R')
\end{aligned}
$$
is a graded deformation of $\A$.
By the proof of \cref{PBWImpliesDeformation},
there is a $k$-vector space isomorphism
$$\Psi:\A[t]\longrightarrow (\chabl)_t$$
that extends an arbitrary choice of 
section
\begin{equation}
\A=T_{\! _H}(H\ot V\ot H)/(\R\cup \R') 
\longrightarrow T_{\! _H}(H\ot V \ot H)
\end{equation}
to a graded $k[t]$-vector space isomorphism
given by the composition 
$$
\A[t]\longrightarrow T_{\! _H}(H\ot V \ot H)[t]
\rightarrow T_{\! _H}(H\ot V\ot H)/(\P_t\cup \P'_t) =(\chabl)_t
\, 
$$
for $\P_t=\{r-\alpha(r)t-\beta(r)t^2: r\in \R\}$ and
$\P'_t=\{r-\lambda(r)t: r\in \R'\}$.
We identify $$V\cong k1_H\ot V \ot k1_H \quad\text{and}\quad H\cong k1_H\ot k1_S\ot H$$
with
subspaces of $H\ot V\ot H\subset T_H(H\ot V \ot H)$
and choose the section in \cref{ChoiceOfSection}
so that $h+(\R\cup \R') \mapsto h$,
$v+(\R\cup \R') \mapsto v$ for $v$ in $V$ and $h$ in $H$,
denoting these cosets in $A$ simply by $h$ and $v$, respectively.

Let $\star$ be the multiplication on $A[t]$
induced by the vector space isomorphism $A[t]\cong A_t$.
On one hand, the multiplication $\star$ is given by the
parameters $\lambda$, $\alpha$, and $\beta$
defining 
$(\chabl)_t$ as
$$a\star a'= a'' \quad\text{ in } A[t]
\quad\text{whenever}\quad 
\Psi(a) \cdot \Psi(a')=\Psi(a'')
\quad\text{ in }
(\chabl)_t
\, .
$$
On the other hand, 
the multiplication $\star$
is given in terms of multiplication maps $\mu_i:A\ot A\rightarrow A$ 
of degree $-i$
for $i\geq 1$ as $A_t$ is a deformation:
$$
a\star a' = m(a\ot a') +
\mu_1(a\ot a')t + \mu_2(a\ot a')t^2 + \ldots 
\qquad\text{ for } a,a'\in A \, , 
$$
for $m:A\ot A\rightarrow A$ the multiplication map on $A$.
We describe $\mu_1$ and $\mu_2$
in terms of $\lambda,\alpha,\beta$.

We identify as usual each $k$-linear map
    $\mu_i: \A\ot \A\rightarrow \A$
    with the $\A$-bimodule map $\mu_i: \A\ot A\ot A\ot \A\rightarrow \A$.
    Note that 
    each $\mu_i$ vanishes on both $A\ot k1_A$ and $k1_A\ot A$
    as $1_{\A}$ is the identity of $\A_t$.
    Thus each $\mu_i$ further corresponds to 
    a cochain on the reduced bar resolution $\BB_{\A}$
that we also denote by $\mu_i$:
    $$
      \mu_i: (\BB_{\A})_2=\A \ot \bA\ot \bA\ot \A
      \longrightarrow \A \, .
          $$
          When working in $(\BB_{\A})_2$, we identify
        $V\ot \bar H$, $\bar H \ot V$, $V\ot V$, $\R$, and $\R'$
    with 
  subspaces of $(\BB_A)_2=A\ot \bar A \ot \bar A \ot A$, see
  \cref{liberty1,liberty2,liberty3,liberty4}.
    
    {\bf Relations of degree $1$.}
For a relation
$r=h\ot v\ot 1_H-1_H\ot \tau (h\ot v)$ in $\R'$,
in $(\chabl)_t$, 
\begin{equation*}
\begin{aligned}
\Psi(h)\cdot \Psi(v) 
- & \sum \Psi(\, ^{ h_1}v)\cdot \Psi(h_2)
\\&=
h\ot v\ot 1_H -\sum 1_H\ot\, ^{h_1}v\ot h_2 
+(\P_t\cup\P_t')
\\&=
r+ ( \P_t \cup \P_t')
\\&=
\lambda(r)\, t + ( \P_t \cup \P_t') 
\\&=
\Psi(\lambda(r)) \, t
\end{aligned}
\end{equation*}
so, on one hand, in $A[t]$,
$$
h\star v 
-  \sum\, ^{ h_1}v \star h_2  
=
\lambda\big( h\ot v \ot 1_H 
-1_H\ot \tau(h\ot v)
\big)
\, t
\, .
$$
On the other hand, since $\deg \mu_i=-i$, 
  $$
  \begin{aligned}
   h\star v 
   -  \sum\, ^{ h_1}v \star h_2
 &=
  m(h\ot v) + \mu_1( h\ot v )\, t 
 -
\sum 
   m(\, ^ {h_1} v\ot h_2) + \mu_1(\, ^{h_1}v\ot h_2)\, t  
   \\ &=
   m\big(h\ot v - \tau(h\ot v)\big) 
   + \mu_1\big( h\ot v  -\tau(h\ot v)\big)\, t
     \\ &=
  \mu_1\big( h\ot v  -\tau(h\ot v)\big)\, t
   \, ,
 \end{aligned}
 $$
 as $\R'\subset \ker m$
 (as $\R'$ lies in the ideal of relations defining $A$).
 Thus in $A[t]$,
 \begin{equation}\label{FirstMultMapToLambda}
   \mu_1\big(h\ot v - \tau(h\ot v)\big)
   \ =\ 
   \lambda (h\ot v \ot 1_H - 1_H\ot
   \tau(h\ot v))
   \, .
   \end{equation}

 We apply this observation 
 to $1_H\ot v\ot h - \tau^{-1}(v\ot h)\ot 1_H$
 which lies in $\R'$
 for $h$ in $\bar H$ and $v$ in $V$
 and use the graded chain maps 
$\iota: X\rightarrow \BB_A$ of 
\cref{HopfKoszulConversion}
(as detailed in~\cref{AppendixValuesLowDegree})
to convert cochains
     on the degree 2 term of the reduced bar resolution
     $(\BB_{A})_2$
    to cochains
    on the resolution $X\sd$.
    On $X_{1,1}$, \cref{ExplicitValues} implies
    (see \cref{IotaOnX11,CochainLambda}) 
$$
    \begin{aligned}
    (\iota^*\mu_1)(1_S\ot v \ot 1_S\ot 1_H\ot h \ot 1_H)
    &= \mu_1(v\ot h-\tau^{-1}(v\ot h)) \\
    &=    \lambda\big(1_H\ot v\ot h - \tau^{-1}(v\ot h)\ot 1_H\big)\\
    &= \lambdax(1_S\ot v\ot 1_S\ot 1_H\ot h\ot 1_H) 
    \, 
    \end{aligned}
    $$
    as $\mu_1(1_H\ot *)=0$ and
    $$
    \lambdax =  \iota^* \mu_1
    \qquad\text{ as cochains on } X_{1,1}
    \, .
$$
      
{\bf Relations of degree $2$.}
    Similarly, 
we write a relation $r$ in $\R$ as $r=\sum_i v_i\ot v_i'$.
Then on one hand, in $\chabl$, 

$$
\begin{aligned}
\sum_i \Psi(v_i)\cdot \Psi(v_i')
&=
\sum_i v_i\ot v_i' + (\P_t\cup\P_t')
=
r+ (\P_t\cup\P_t')
\\ &=
\alpha(r)\, t+\beta(r)\, t^2+ (\P_t\cup\P_t')
=\Psi\Big(
\alpha(r)\, t+\beta(r)\, t^2\Big)
\end{aligned}
$$
and hence, in $\A[t]$,
$$
\begin{aligned}
\sum_i v_i\star v_i'
=
\alpha(r)\, t+\beta(r)\, t^2
\, .
\end{aligned}
$$
On the other hand, viewing $\R$ as a subset of $\bar A\ot \bar A \subset A\ot A$
(see \cref{liberty1}),
$$
    \begin{aligned}
      \sum_i  v_i\star v'_i 
&=
      \sum_i m(v_i\ot v'_i) + \mu_1(v_i\ot v'_i)t + \mu_2(v_i\ot
      v'_i)t^2
\\&= m(r) + \mu_1(r)t + \mu_2(r)t^2 
= \mu_1(r)t + \mu_2(r)t^2 
    \end{aligned}
    $$
    as $m\equiv 0$ on $\R\subset A\ot A$.
Thus
$
\alpha(r)\, t+\beta(r)\, t^2 = \mu_1(r)t + \mu_2(r)t^2 
\ \text{ for } r \in \R,
$
       i.e.,
\begin{equation}\label{MultiplicationMapsAreAlphaBeta}
    \begin{aligned}
            \alpha
      =
      \mu_1      \qquad\text{ and }
      \qquad
      \beta
      =
      \mu_2
      \qquad\text{ on $\R$}
      \, .
    \end{aligned}
\end{equation}
     On $X_{2,0}$ using the identification of \cref{liberty2},
     \cref{ExplicitValues} implies (see \cref{eqn:r-to-r}) that
$$\iota (1_S\ot r\ot 1_S\ot 1_H\ot 1_H) =
r 
\quad\text{ in } (\BB_A)_2
\qquad\text{ for } r \text{ in } \R \, .
$$
Then by~\cref{CochainAlpha},
$$
\alphax = \iota^* (\mu_1)
\quad\text{ and }\quad
\betax = \iota^*(\mu_2)
\qquad\text{ as cochains on $X_{2,0}$}
  \, .
$$
Then since
$\alphax\equiv \betax\equiv \lambda \equiv 0$
on $X_{0,2}$ and $\alphax\equiv \betax\equiv 0$ on $X_{1,1}$
and $\lambdax\equiv 0$ on $X_{2,0}$,
    $$
    \alphax+\lambdax = \iota^*(\mu_1)
    \qquad\text{and }\qquad
    \betax = \iota^*(\mu_2)
    \, 
    $$
    as cochains on $X_2 = X_{0,2}\oplus X_{1,1}\oplus X_{2,0}$.
    
\vspace{2ex}

{\bf Condition~(a).}
As $\A_t$ is a deformation,  the    cochain $\mu_1$ 
  is a Hochschild 2-cocycle with $[\mu_1,\mu_1]=2\delta^*(\mu_2)$
  and $[\mu_1,\mu_2]$ a coboundary
  (see \cite{GerstenhaberSchack}). 
  It follows that $d^*(\alphax +\lambdax)=0$ since $\mu_1$ is a cocycle,
proving Condition~(a) of the theorem.
 
\vspace{2ex}

{\bf Condition~(b).}
A degree argument shows 
that each side of the equation in Condition~(b) is~0
on both $X_{0,3}$ and $X_{1,2}$:
$\alphax$ and $\betax$ are zero on $X_{0,2}$ and $X_{1,1}$
while $\lambdax$ is zero on $X_{0,2}$ and $X_{2,0}$.
For the right side of the equation, $\betax$ is evaluated on the image
of the differential $d$, and the image of $d$ on
$X_{0,3}\oplus X_{1,2}$ is in $X_{0,2}\oplus X_{1,1}$. 
For the left side of the equation, the bracket is a graded commutator
of circle products (see~\cref{circle-prod-defn})
involving $\alphax + \lambdax$ (see also \cref{eqn:Xbracket}).
Recall from \cref{HopfKoszulConversion} that $\pi\, \iota = 1_X$.
On $X_{0,3}$, both $\alphax$ and $\lambdax$ are 0
on the first two or last two tensor factors (since no elements
of $V$ are in these factors),
and therefore so is any such circle product.
On $X_{1,2}$, any such circle product is also~0:
$\alphax$ takes as input elements of $\R$, and so is $0$ on 
any two tensor factors of elements in $X_{1,2}$,
while $\lambdax$ takes as input pairs of elements of $V$ and $\bar H$,
and outputs elements of $H$, on which $\alphax$,
$\lambdax$ are both $0$.

It remains to show the equation holds 
on $X_{2,1}$ and on $X_{3,0}$. 
Consider $X_{2,1}$ first.
All maps considered are $A$-bimodule homomorphisms,
so to avoid extra components of $A$ on the outside cluttering notation,
we identify
every $A$-bimodule homomorphism 
$\A\ot M \ot A \rightarrow  N$
(giving chain maps and cocycles)
with the corresponding $k$-linear map $M\rightarrow N$
and identify each $A$-bimodule $A\ot M \ot A$ with $M$,
for any $k$-vector spaces $M$ and $N$.
We again use the graded chain maps $\pi: \BB_A\rightarrow X$ and 
$\iota: X\rightarrow \BB_A$ of 
\cref{HopfKoszulConversion}
with $\pi \, \iota =1_X$.

We employ a key observation in degrees $2$ and $3$.
Since $\pi\, \iota$ is the identity map on $X$,
$\iota^*\pi^*$ is the identity map on cochains on $X$,
so $\iota^*\pi^*\iota^*\mu=\iota^*\mu$ for any cochain $\mu$
on $\BB_A$:
\begin{equation}\label{KeyObservation}
 \pi^*\iota^*(\mu)
= \mu
\qquad\text{ on $\ima\iota$}
\, .
\end{equation}

By \cref{ExplicitValues},
$\R'\oplus \R\subset\ima\iota_2\subset (\BB_A)_2$
(with the identifications 
\cref{liberty1,liberty2,liberty3,liberty4}), so
$$\pi^*\iota^*\mu_1 = \mu_1
\qquad\text{ on }\quad
\R\oplus \R'\subset (\BB_A)_2
\, 
$$
by our key observation \cref{KeyObservation}.
This then implies that
\begin{equation}\label{simplermu}
  \pi^*\iota^*\mu_1\ot 1 -  1\ot \pi^*\iota^*\mu_1
  = \mu_1\ot 1-1\ot \mu_1
  \qquad\text{ on }\ \ima\iota_3\big|_{_{X_{2,1}}}\subset (\BB_A)_3
\end{equation}
since by \cref{ExplicitValues},
in our simplified notation of \cref{liberty4}, 
\begin{equation}\label{subset}
  \begin{aligned}
    \ima \iota_3\big|_{_{X_{2,1}}}
    \subset
   \big( (V\ot \R')\oplus (\bar H\ot \R) \big) 
  \cap 
  \big( (\R' \ot V)\oplus (\R\ot \bar H ) \big)
  \subset (\BB_A)_3
  \, .
\end{aligned}
\end{equation}

We now consider the square bracket of $\alpha+\lambda$,
a cochain on $X_{3}$, using \cref{simplermu}
(see~\cref{eqn:Xbracket} and \cref{BracketDefinition}):
\begin{equation}\label{SuppressingInBracket}
  \begin{aligned}
  [\alphax + \lambdax, \, \alphax+\lambdax]_{_{X_{2,1}}} 
  & = \ \iota^*\, [\pi^*(\alphax+\lambdax),\, \pi^*(\alphax + \lambdax) ] \\
   & = \ \iota^*\, [\pi^*\iota^*(\mu_1),\pi^*\iota^*(\mu_1)]\\
  & =\
        2 \iota^*\, \Big(\pi^*\iota^*(\mu_1) \, \pr_{\BB_A}\,
        \big(\pi^*\iota^*(\mu_1)\ot 1
        - 1\ot \pi^*\iota^*(\mu_1)\big)\Big) \\
  & = \
        2\iota^*\big(\pi^*\iota^*(\mu_1) \, \pr_{\BB_A}\, (\mu_1\ot 1 - 1\ot \mu_1)\big)\, 
        \, ,
        \end{aligned}
        \end{equation}
for projection map
$\pr_{\BB_A}:\B_A\rightarrow \BB_A$.
We claim this coincides with
$$2\iota^*\big(\mu_1  (\mu_1\ot 1 - 1\ot \mu_1)\big)\, .$$
To verify this claim, we lift to $\B_A$ and work in $A\ot A$ instead
of $\bar A \ot \bar A$.
First 
denote by $m: \A\ot\A\rightarrow A$ the multiplication map on $A$
and observe that its kernel 
is the $A$-bimodule 
generated by $\R$ and $\R'$ viewed as vector subspaces of 
$A\ot A$ (see \cref{liberty1}).
Next observe that for any $2$-cocycle $\mu$ on $\B_A$,
\begin{equation}\label{CocylesAndMultMaps}
m(\mu\ot 1 - 1\ot \mu)=\mu(m\ot 1 -1\ot m)
\end{equation}
since $d^*\mu=0$ 
implies that
\begin{equation*}
\begin{aligned}
  m(\mu\ot 1 - 1\ot \mu)( a \ot b \ot c )
  &=\mu(a\ot b)c-a\mu(b\ot c)
  =\mu(ab\ot c)-\mu(a\ot bc)\\
  &=\mu(m(a\ot b)\ot c)-\mu(a\ot m(b \ot c))
\quad\text{ for }\ a,b,c\in \A\, .
\end{aligned}
\end{equation*}

Next we lift the image of $\iota$ on $X_{2,1}$
from the reduced bar to the bar resolution of $A$.
The set of relations $\R'$ in $\bA\ot\bA$ 
is the projection of the set of relations $\R'$ in $A\ot A$,
see \cref{liberty3,liberty4},
using an abuse of notation.
Since the projection map $\pr_{\BB_A}: \B_A\rightarrow \BB_{A}$
is injective on $\R'\subset A\ot A\subset \B_A$,
we may lift each element of $\R'$ in $\bA\ot \bA$ uniquely to one of $\R'$ in $A\ot A$.
We extend this to a lift of the subspace in \cref{subset}
by replacing 
every 
 $h\ot v-(1\ot \pr_{\bar H})\tau(h\ot v)$ by 
 its unique preimage $h\ot v-\tau(h\ot v)$.
 We obtain a lift of the
 image of $\iota_3$ on $X_{2,1}$
 to $(\B_A)_3$:
 $$\text{Lift}\big(\ima\iota_3\big|_{X_{2,1}}\big)
 \subset  
 \big( (V\ot \R')\oplus (H\ot \R) \big) 
  \cap 
  \big( (\R' \ot V)\oplus (\R\ot H ) \big)  
  \subset A\ot A  \ot A
  $$
  which
   lies in the kernel of the map $m\ot 1-1\ot m$.
  \cref{CocylesAndMultMaps} then implies that
$$
\begin{aligned}
\text{Lift} \big(\ima\iota_3\big|_{X_{2,1}}\big)
&\subset\
\ker (m\ot 1 -1\ot m)\\
&\subset\ \ker \big(\mu_1(m\ot 1 -1\ot m)\big)
=
\ker \big(m(\mu_1\ot 1 - 1\ot \mu_1)\big)
\end{aligned}
$$
for $\mu_1$ extended to a map $A\ot A\rightarrow A$
by setting $\mu_1(1_A\ot *)=0=\mu_1(*\ot 1_A)$ as usual.
Then 
$$
(\mu_1\ot 1 - 1\ot \mu_1) \ \text{Lift} \big(\ima\iota_3\big|_{_{X_{2,1}}}\big)
\subset\ 
\ker m
\subset\ A (\R\oplus \R')A
\, .
$$
 
But this set must also lie in 
the degree $1$ component $(A\ot A)_1$
of $A\ot A$ since
$\deg\mu_1=-1$ and $\text{Lift}(\ima\iota_3|_{X_{2,1}})$ lies in the degree
$2$ component $(A\ot A\ot A)_2$.
Hence
$$
(\mu_1\ot 1 - 1\ot \mu_1)\ \text{Lift}\big(\ima\iota_3\big|_{_{X_{2,1}}}\big)
\subset A(\R\oplus \R')A \cap (A\ot A)_1
= \R'
\subset A\ot A
\, .
$$

To transfer this back to brackets on $\BB_A$, we
apply the projection map $\pr_{\BB_A}$
which is $A\ot A\rightarrow \bA\ot \bA$ in our simplified notation.
By \cref{ExplicitValues},
$$\pr_{\BB_A}
(\mu_1\ot 1 - 1\ot \mu_1)
\big(\ima\iota_3\big|_{_{X_{2,1}}}\big)
=
\pr_{\BB_A}
(\mu_1\ot 1 - 1\ot \mu_1)
\text{Lift}\big(\ima\iota_3\big|_{_{X_{2,1}}}\big)
\subset \R' \subset \ima\iota_2
\, ,
$$
so on $\ima\iota_3\big|_{_{X_{2,1}}}$,
$$
 \pr_{\BB_A}\, 
(\mu_1\ot 1 - 1\ot \mu_1)\
\subset \ima\iota_2
\, .
$$
Our key observation \cref{KeyObservation} that
$\pi^*\iota^*(\mu_1)=\mu_1$
on $\Ima\iota$ then verifies
the claim:
\label{SupressProjectionInBrackets}
\begin{eqnarray*}
  2\iota^* \big(\pi^*\iota^*(\mu_1) \, \pr_{\BB_A}\,
  (\mu_1\ot 1 - 1\ot \mu_1)\big)
  & = & 
        2\iota^*\big(\mu_1 \, \pr_{\BB_A}\, (\mu_1\ot 1 - 1\ot \mu_1)\big)
        \quad\text{ on $X_{2,1}$.}
\end{eqnarray*}

We now may apply the standard deformation theoretic identity
$[ \mu_1 , \mu_1 ] = 2 d_3^*(\mu_2)$
(see~\cite{GerstenhaberSchack}) and suppress mention of 
$\pr_{\BB_A}$ as is customary to rewrite the Gerstenhaber bracket,
continuing our earlier calculation:
\begin{eqnarray*}
  [\alphax + \lambdax, \alphax+\lambdax]_{_{X_{2,1}}}
  & = &
          2\iota^* \big(\pi^*\iota^*(\mu_1)  (\mu_1\ot 1 - 1\ot \mu_1)\big)\\
  &  = & 2 \iota^*\big( \mu_1(\mu_1\ot 1 - 1\ot \mu_1)\big)
        \  =\ \iota^* [\mu_1, \mu_1]\\
& = & 2 \iota^* d^* \mu_2
 \ =\ 2 d^* \iota^* \mu_2
 \ =\ 2 d^* \beta
 \, 
\end{eqnarray*}
and Condition (b) on $X_{2,1}$ holds.

We now consider the bracket on $X_{3,0}$.
We argue as in the case of $X_{2,1}$
using again \cref{KeyObservation} and
 \cref{ExplicitValues}.
One may verify as before that
$$\begin{aligned}
  [\alphax + \lambdax , \alphax + \lambdax ] _{_{X_{3,0}}} 
  &= 2 \iota^* ( \pi^*\iota^* (\mu_1)  (\mu_1\ot 1 - 1\ot\mu_1))\\
  &= 2 \iota^* (\mu_1  (\mu_1\ot 1 - 1\ot\mu_1))
  \, .
\end{aligned}
$$
The image of $\mu_1\ot 1 - 1\ot \mu_1$ on $\iota (X_{3,0})$
lies in $\Ker m$,
hence in $(\R \oplus \R')\subset (\B_A)_2$,
which is contained in $\Ima \iota_2$ by \cref{ExplicitValues}
upon projecting to $(\BB_A)_2$.
We conclude that
$$[\alphax + \lambdax , \alphax+\lambdax ] _{_{X_{3,0}}} =
2d^* \beta\big|_{_{X_{3,0}}}
$$
and Condition~(b) holds on $X_{3,0}$.
Therefore Condition~(b) holds on all of $X_3$. 
\vspace{2ex}

{\bf Condition (c).}
Now observe that
the bracket $[\lambdax+\alphax, \betax]$ is cohomologous to 
$\iota^*([\mu_1,\mu_2])$,
and by general deformation theory results, $[\mu_1,\mu_2]$ 
is a coboundary.
Thus $[\alphax+\lambdax,\betax]$ is a coboundary,
i.e., there is some 2-cochain $\xi$ with $[\alphax + \lambdax, \betax]=d^*(\xi)$.
But by the definitions of $\lambda, \alpha,\beta$, the coboundary 
$[\alphax +\lambdax,\betax]$ is of graded degree~$-3$, and 
the only 2-cochain on $X_2$ of graded degree~$-3$ is $0$.
Hence Condition~(c) holds. 
\end{proof}

\section{PBW deformations from three cohomological conditions}
\label{sec:PBWFromThree}

Again, let $\Ho$ be a Hopf algebra acting
on a graded Koszul algebra $S$ (with action preserving the grading).
We seek to generalize~\cite[Theorem 5.3]{SW-Koszul}
and we show here that the converse of \cref{ForwardsDirection} 
holds.
Again, we consider equality of
cochains, not just equality of cohomology classes,
on the twisted product resolution $X=K_S\ott \BB_\Ho$
formed from the Koszul resolution 
  $K_S$ of $S$ and the reduced bar resolution $\BB_\Ho$
  of $H$  (see \cref{eqn:xij}).
We 
consider the filtered algebra $\chabl$ defined in 
Eqn.~\eqref{eqn:Hlab2} by some set of parameter functions 
$\alpha,\beta,\lambda$
which define cochains $\alphax, \betax, \lambdax$ on $X$, 
see \cref{CochainAlpha,CochainLambda}.

\vspace{2ex}

  \begin{thm}\label{BackwardsDirection}
For any choice of parameter functions $\alpha,\beta,\lambda$, 
the filtered algebra $\chabl$ defined in 
Eqn.~\eqref{eqn:Hlab2} is a strong
PBW deformation of $S\# \Ho$
if
   \begin{itemize}
    \item[(a)]\ \ $d^*(\alphax+\lambdax) = 0$, 
    \item[(b)]\ \ $[\alphax+\lambdax, \ \alphax + \lambdax]=2d^*\betax$, 
    \item[(c)]\ \ $[\lambdax + \alphax,\  \betax]= 0$ 
  \end{itemize}
  as cochains on the twisted product resolution 
$X=K_S\ott \BB_\Ho$, for bracket $[\ ,\ ]=[\ ,  \ ]_{\X}$.
\end{thm}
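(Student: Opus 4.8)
The plan is to reverse the argument of \cref{ForwardsDirection}. From the three cochain conditions on $X$ I will recover Hochschild cocycles on the reduced bar resolution $\BB_A$ of $A=S\#\Ho$, integrate them to a graded deformation $A_t$ of $A$, identify $A_t$ with the homogenization $(\chabl)_t$, and then apply \cref{DeformationImpliesPBW} (with $W=\Ho\ot V\ot \Ho$) to conclude that the fiber $(\chabl)_t|_{t=1}=\chabl$ is a strong PBW deformation of $S\#\Ho$. Because the construction in \cref{DeformationImpliesPBW} returns exactly the parameters $\alpha,\beta,\lambda$ used to define $\chabl$, the strong PBW deformation it produces is $\chabl$ itself, so that the canonical graded epimorphism $S\#\Ho\to\gr\chabl$ of \cref{Epimorphism} is an isomorphism.

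First I would transfer the hypotheses from $X$ to $\BB_A$ by means of the chain maps $\pi\colon\BB_A\to X$ and $\iota\colon X\to\BB_A$ of \cref{HopfKoszulConversion}, which satisfy $\pi\iota=1_X$. Put
\[
\mu_1=\pi^*(\alphax+\lambdax)\qquad\text{and}\qquad \mu_2=\pi^*\betax
\]
as cochains on $\BB_A$; since $\iota^*\pi^*=1$ on cochains of $X$, we have $\iota^*\mu_1=\alphax+\lambdax$ and $\iota^*\mu_2=\betax$, in agreement with the identifications in the proof of \cref{ForwardsDirection}. As $\pi$ is a chain map, Condition~(a) gives $d^*\mu_1=\pi^*d^*(\alphax+\lambdax)=0$, so $\mu_1$ is a Hochschild $2$-cocycle. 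Using the definition \cref{eqn:Xbracket} of the bracket on $X$, Condition~(b) becomes $\iota^*[\mu_1,\mu_1]_{\BB_A}=2d^*\betax$ and Condition~(c) becomes $\iota^*[\mu_1,\mu_2]_{\BB_A}=0$. Applying $\pi^*$ and using that $\iota\pi$ is chain homotopic to $1_{\BB_A}$—so that $\pi^*\iota^*$ induces the identity on the Hochschild cohomology of $A$—I find that the cohomology classes of $[\mu_1,\mu_1]_{\BB_A}$ and $[\mu_1,\mu_2]_{\BB_A}$ in $\HH^3(A)$ both vanish; equivalently, the primary and secondary obstructions to integrating the $2$-cocycle $\mu_1$ vanish.

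Next I would integrate $\mu_1$ to a graded deformation. The vanishing conditions just obtained are precisely Gerstenhaber's first two obstruction conditions \cite{GerstenhaberSchack}, and the crucial point is that for the Koszul algebra $S$ these are the \emph{only} obstructions. This is the Braverman--Gaitsgory phenomenon \cite{Braverman-Gaitsgory}, now over the noncommutative base $\Ho$: every higher obstruction lies in the part of $\HH^3$ detected in homological degree $3$ of the twisted product resolution $X=K_S\ott\BB_\Ho$, and the Koszul intersection terms $\tilde K_n=\bigcap_j V^{\ot j}\ot\R\ot V^{\ot(n-2-j)}$ force each such obstruction to be governed by Conditions~(b) and~(c) on $X_{2,1}$ and $X_{3,0}$, where they have already been imposed. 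Carrying this out yields a graded deformation $A_t$ whose first and second multiplication maps restrict on $\R$ and $\R'$ to $\alpha,\beta,\lambda$ exactly as in \cref{MultiplicationMapsAreAlphaBeta} and \cref{FirstMultMapToLambda}; hence $A_t=(\chabl)_t$, so $(\chabl)_t$ is a graded deformation of $S\#\Ho$ and the reduction of the first paragraph applies.

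I expect the main obstacle to be this last integration step, namely proving that no obstructions beyond Conditions~(b) and~(c) occur. Transferring the three conditions between $X$ and $\BB_A$ is routine once the key identity \cref{KeyObservation} and the explicit low-degree chain-map values are in hand, and the final appeal to \cref{DeformationImpliesPBW} is formal; the real content is the Koszul sufficiency, that is, showing that the degree-preserving structure of $X$ confines all higher deformation obstructions within reach of the degree $3$ conditions already imposed. As an alternative to explicit integration, one could instead translate Conditions~(a)--(c) directly into the equality of leading-term ideals characterizing strong PBW deformations in \cref{PBWImpliesLH}, the Koszul property again ensuring that no hidden relations appear in degrees above $3$.
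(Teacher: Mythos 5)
Your outline is the paper's own route---set $\mu_1=\pi^*(\alphax+\lambdax)$, $\mu_2=\pi^*\betax$, integrate by obstruction theory with a Braverman--Gaitsgory degree argument, then invoke \cref{DeformationImpliesPBW}---but two steps you treat as automatic are genuine gaps, and they are where the paper does most of its work. First, the reduction of Conditions~(b), (c) to ``the primary and secondary obstructions vanish'' is not well-posed as you state it. Condition~(b) does \emph{not} say $2\delta^*\mu_2=[\mu_1,\mu_1]$ on $\BB_A$; it only says this after applying $\iota^*$. Consequently $\mu_2=\pi^*\betax$ need not solve the level-two deformation equation, $[\mu_1,\mu_2]$ need not even be a cocycle (the bracket $[\mu_1,\cdot]$ of the second multiplication map is a cocycle only when the level-two equation holds on the nose), so ``the class of $[\mu_1,\mu_2]$ vanishes in $\HH^3(A)$'' is not meaningful; moreover the secondary obstruction class depends on \emph{which} solution of the level-two equation one uses. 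The paper repairs this: writing $\omega=\delta^*\mu_2-\tfrac12[\mu_1,\mu_1]$, one shows $\iota^*\omega=0$, hence $\omega=\delta^*\mu$, sets $\mu'$ to be a cocycle with $\iota^*\mu'=\iota^*\mu$, and replaces $\mu_2$ by $\tilde\mu_2=\mu_2-\mu+\mu'$, which satisfies $2\delta^*\tilde\mu_2=[\mu_1,\mu_1]$ exactly \emph{and} still has $\iota^*\tilde\mu_2=\betax$; only because of the latter can Condition~(c) be brought to bear on the actual obstruction $[\mu_1,\tilde\mu_2]$. Also, the higher obstructions are not ``governed by Conditions~(b) and (c)'': for $i\geq 3$ they lie in $\HH^{3,-(i+1)}(A)$ and vanish for pure degree reasons, since $X_3$ is generated in graded degree $\leq 3$ and so admits no nonzero cochains of graded degree $\leq -4$.

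The more serious gap is your conclusion ``hence $A_t=(\chabl)_t$.'' The theorem asserts that $\chabl$ \emph{itself} is a strong PBW deformation; what \cref{PBW-iff-deformation} gives is only that the fiber of the abstractly constructed deformation is isomorphic to \emph{some} strong PBW deformation $\cH_{\lambda',\alpha',\beta'}$, with parameters manufactured by the recipe of \cref{DeformationImpliesPBW} (see \cref{ConstructingLambda} and \cref{ConstructingParameters}). Your citation of \cref{MultiplicationMapsAreAlphaBeta} and \cref{FirstMultMapToLambda} is circular here: those identities were derived in \cref{ForwardsDirection} under the hypothesis that the deformation \emph{is} $(\chabl)_t$, which is exactly what must be proved now. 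Instead one must verify $\lambda'=\lambda$, $\alpha'=\alpha$, $\beta'=\beta$ using the explicit low-degree values of $\pi$ from \cref{PiInLowDegree}: $\mu_1$ restricts to $\lambda$ on the smash relations by the computation \cref{PiOnR'}, and to $\alpha$ on $\R$ because $\pi(r)=(1_S\ot r\ot 1_S)\ot(1_H\ot 1_H)$; and $\beta'=\beta$ requires both $\iota^*\tilde\mu_2=\betax$ (the control preserved in the repair above) and the vanishing of the correction term $\lambda'(1_H\ot\alpha(r))$ appearing in \cref{ConstructingParameters}, which follows from $\pi(v\ot h)=0$ as in \cref{BetaIsMu2}. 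This parameter identification occupies roughly half of the paper's proof; without it your argument shows only that \emph{some} strong PBW deformation of $S\#\Ho$ exists, not that $\chabl$ is one. Your alternative route via \cref{PBWImpliesLH} has the same missing content: exhibiting the absence of hidden relations for the specific relations $\P\cup\P'$ is precisely what is not yet proved.
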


\vspace{1ex}

\begin{proof}
Assume that Conditions (a), (b), and (c) hold.
We show that $\chlab$ is a strong PBW deformation of $\A=S\# \Ho$
using the
graded chain maps given in
\cref{HopfKoszulConversion},
$\pi: \BB_A\rightarrow X$ and 
$\iota: X\rightarrow \BB_A$
with $\pi \, \iota =1_X$.

{\bf Three conditions give a graded deformation.}
Let $\delta$ be the differential on $\BB_A$ and $d$ the differential
on $X$; recall that these differentials preserve the
grading on the algebra $A$ (with $H$ in degree 0). 
Define cochains on $\BB_A$
$$\mu_1=\pi^*(\alphax + \lambdax)
\quad\text{ and }\quad \mu_2=\pi^*(\betax)\, ,
\qquad\text{ and set }\quad
\omega
=\delta_3^*(\mu_2)-\tfrac{1}{2}[\mu_1,\mu_1]
  \,  . 
  $$

 By Condition~(a),  $\alphax+\lambdax$
is a 2-cocycle, and so $\mu_1$ is a  2-cocycle
on $\BB_A$.
Thus $\mu_1$  
defines a first level deformation $A_1$
of $A$ (see \cite{GerstenhaberSchack}).

We now argue that Condition~(b)  implies that 
this first level deformation $A_1$ can be extended to the second 
level.
By~\cref{HopfKoszulConversion} and the definition
of bracket on $X\sd$, 
$$
\begin{aligned}
  \iota^*_3(\omega)
  &= \iota^*_3 \, \delta_3^*\, \pi_2^*(\betax)
-\tfrac{1}{2}\, \iota_3^*\, [\pi^*_2(\alphax+\lambdax),\, \pi^*_2(\alphax+\lambdax)]\\
&= d_3^*\, \iota^*_2 \, \pi_2^*(\betax)
-\tfrac{1}{2}[\alphax+\lambdax,\, \alphax+\lambdax]_{\X} \\
&= d_3^*(\betax )
-\tfrac{1}{2}[\alphax+\lambdax,\, \alphax+\lambdax]_{\X}  \\
&= 0\, .
\end{aligned}
$$
Hence
$\omega$ is a coboundary:  
$\omega=\delta^*(\mu)$ for some 2-cochain $\mu$ on $\BB_A$,
and $\mu$ has graded degree $-2$ by its definition.
Next note that as $\iota$ is a chain map,
$$
d^*\, \iota^*(\mu) = \iota^* \delta^*( \mu) =\iota^*(\omega) =0\, ,
$$
so $\iota^*(\mu)$ is a 2-cocycle. 
Hence there is a 2-cocycle $\mu'$ on $\BB_A$
with $\iota^*\mu'=\iota^*\mu$.
We replace $\mu_2$ by $\tilde{\mu}_2=\mu_2-\mu+\mu'$ so that
$\iota^*(\tilde{\mu}_2)=\beta$ and 
$$2\, \delta^*(\tilde{\mu}_2)= 2\, \delta^*(\mu_2+\mu')-2\, \omega
= 2\, \delta^*(\mu_2) - 2\, \omega = [\mu_1,\mu_1] $$
by the definition of $\omega$,  
since $\mu'$ is a cocycle. 
We see that the obstruction to lifting $A_1$
to a second level deformation,
using the multiplication map $\tilde{\mu}_2$, is 0, so  
$\mu_1$ and $\tilde{\mu}_2$ together define
a second level deformation $A_2$ of $A$.

Finally, Condition~(c) implies  $A_2$ lifts
to a third level deformation of $A$: 
We add the coboundary $\mu'-\mu$ to $\mu_2$.
This adds a coboundary to $[\mu_2, \mu_1]$.
It follows that 
$[\tilde{\mu}_2, \mu_1] = \delta^*_3(\mu_3)$
for a cochain $\mu_3$ on $\BB_A$, and $\mu_3$ has 
graded degree $-3$.
Therefore the obstruction to lifting $A_2$ to a third level deformation
vanishes and the multiplication maps
$\mu_1, \tilde{\mu}_2, \mu_3$ indeed
define a third level deformation $A_3$ of $A$.

We use an argument of Braverman and Gaitsgory~\cite{Braverman-Gaitsgory}. 
By~\cite[Proposition 1.5]{Braverman-Gaitsgory}, 
the obstruction to lifting $A_3$ to a fourth level deformation
of $A$ lies in $\HH^{3,-4}(A)$.
Since the chain map $\iota$ has graded degree~0,
applying  $\iota^*$ to a cochain on $\BB_A$ representing this 
obstruction gives
a cochain on $X_3$ of graded degree $-4$. 
By its definition, 
$X_3$ is generated by elements of graded degree 3
or less, and so this cochain must vanish.
(Recall $S$ is Koszul, and see~\cite[Definition~3.4]{Braverman-Gaitsgory}.) 
Thus the obstruction is a coboundary, and it follows that 
the deformation $A_3$ lifts
to a fourth level deformation $A_4$ of $A$.
Continuing in this fashion, we find that
the obstruction to lifting an $i$-th level deformation 
lies in $\HH^{3, -(i+1)}(A)=0$ for $i\geq 3$
and the obstruction is a coboundary.
Therefore 
$A_i$ lifts to $A_{i+1}$, 
an $(i+1)$-st level deformation of $A$, for all $i\geq 1$.

Thus Conditions (a), (b), and (c) imply that the
$2$-cocycles $\mu_1$ and $\mu_2$ lift
(integrate) to a graded deformation 
$\A_t$ of $\A$.
By \cref{PBW-iff-deformation},
every fiber of this graded deformation is isomorphic to a strong 
PBW deformation of $A=S\# H$.

{\bf Presenting the graded deformation
  as a PBW quotient algebra.}
We identify the graded deformation 
$\A_t$ of $\A$
with the vector space $\A[t]$ via multiplication given by
$$
a\star a'=a a'+\mu_1(a\ot a')t+\mu_2(a\ot a')t^2 + \mu_3(a\ot a')t^3+\ldots
\qquad\text{
  for all $a,a'\in A$}
$$
for some $k$-linear maps $\mu_i:A\ot A\rightarrow A$
with $\mu_1$, $\mu_2$ induced from 
the $A$-bimodule maps $\mu_1$, $\mu_2:A\ot \bar A\ot \bar A \ot A
\rightarrow A$ by setting $\mu_i(1_A\ot *)=0=\mu_i(*\ot 1_A)$.
By \cref{PBW-iff-deformation}, as $\A_t$ is a graded deformation,
the fiber
$\A_t|_{t=1}$ is isomorphic as a filtered algebra
to a strong PBW deformation
$$
\A_t\big|_{t=1} \cong \cH_{\lambda',\alpha',\beta'}
$$
of $\A$ for some $k$-linear parameter functions
given in the proof of \cref{DeformationImpliesPBW}
(see \cref{quad-relns}, \cref{smash-relns}, 
and 
  \cref{RewritingSmashProduct}):
\[
  \lambda': \R'\longrightarrow \Ho,\ \ \ 
  \alpha': \R \longrightarrow V\ot \Ho, \ \ \
    \beta': \R\longrightarrow \Ho
    \, .
\]

{\bf PBW deformation coincides
  with $\cH_{\lambda,\beta,\alpha}$.}
We show that the parameters $\lambda',\alpha',\beta'$
coincide with $\lambda,\alpha,\beta$.
When working in the reduced bar complex $\BB_A$, we identify
$$\text{ 
  $V\ot \bar H$, \ $\bar H \ot V$, \ $V\ot V$, \ $\R'$, and $\R$}
$$
    with 
    vector subspaces of 
    $(\BB_A)_2=A\ot \bar A \ot \bar A \ot A $,
    see \cref{liberty1,liberty2,liberty3,liberty4},
    and identify the $A$-bimodule maps
    (see \cref{CochainAlpha,CochainLambda}) 
$$\pi^* \lambdax,\ \pi^* \alphax, \ \pi^* \betax: (\BB_A)_2=A\ot \bar A
\ot \bar A \ot A \longrightarrow A$$
with $k$-linear maps $\bar A\ot \bar A
\rightarrow A$
to suppress superfluous tensor factors of $1$.

We find explicit values for $\lambda', \alpha', \beta'$
from the
proof of \cref{DeformationImpliesPBW}
with $W=H\ot V\ot H$, $\RR_1=\R'\subset W$,
and $\RR_2=\R\subset V\ot V$
using the projection map $\pi_{_A}:T_H(W)\rightarrow A$
and a section $\iota_{_A}: A\rightarrow T_H(W)$
right inverse to $\pi_{_A}$ as in the proof
chosen to map the product $vh$ in $A$ to $1_H\ot
v\ot h$ in $W\subset T_H(W)$
for $v$ in $V$ and $h$ in $H$.

We first notice that $\lambda\equiv \lambda'$, 
since for arbitrary
$$r=h\ot v\ot 1_H - 1_H\ot \tau(h\ot v) 
\quad\text{ in $\RR_1$},
$$
with $h$ in $\bar H$ and $v$ in $V$,
\cref{ConstructingLambda} 
implies that
$$\lambda'(r) = \mu_1(hv\ot 1_H)+\mu_1(h\ot v) 
-\sum \mu_1(\ ^{h_1}v\ot h_2)-\mu_1(1_H\ot \ ^{h_1}v)h_2$$
which by \cref{PiInLowDegree} is
$$
\begin{aligned}
\mu_1\big(h\ot v
-\tau(h\ot v)\big)
&=
\pi^*(\alphax+\lambdax)(1\ot \pr_{\bar H})(1\ot 1 - \tau)(h\ot v)
=
(\pi^* \lambdax)(h\ot v)\\
\end{aligned}
$$
for projection map $\pr_{\bar H}: H\rightarrow \bar H$
as $\mu_1(1_H\ot *)=0=\mu_1(*\ot 1_H)$
and $\alphax\equiv 0$ on $X_{1,1}$.
By \cref{PiInLowDegree} again, this is
$$
\begin{aligned}
  -\lambdax &\big(\sum (1_S\ot \, ^{h_1}v\ot 1_S)\ot (1_H\ot \pr_{\bar
    H} \, h_2\ot 
  1_H) \big)\, .
  \end{aligned}
  $$
We add and subtract 
$1\ot \, ^{h_1}v\ot 1\ot 1\ot (h_2-\pr_{\bar H} h_2)\ot 1$  to each summand
noting that the second component lies in $k1_H$,
pass from $\lambdax$ to $\lambda$ using \cref{CochainLambda},
 and rewrite the resulting expression in terms of 
 $\tau^{-1}$.  Since $\tau^{-1}$ acts as the swap map when
 any component of the input lies in $k1_H$ and we are tensoring over $k$,
  the terms with $h_2-\pr_{\bar H} h_2$ cancel again, leaving
 \begin{equation}\label{PiOnR'}
   \begin{aligned}
     \mu_1\big(h\ot v -\tau(h\ot v)\big)
     &=
         -\sum\lambda  \big(1_H\ot\, ^{h_1}v\ot h_2
      -\tau^{-1}(\, ^{h_1}v\ot h_2)\ot 1_H\big)
 \\  &\ =\
  -\lambda\big(1_H\ot \tau(h\ot v)-\tau^{-1} \tau(h \ot v)\ot 1_H\big)
\\ &\ =\
\lambda\big(h\ot v\ot 1_H-1_H\ot \tau(h\ot v)\big)
\end{aligned}
\end{equation}
which is just $\lambda(r)$.
Hence $\lambda\equiv \lambda'$ on $\RR_1$.

Now we check that $\alpha\equiv \alpha'$ and $\beta \equiv \beta'$ on
$\RR_2$.
For $r$ in $\RR_2\subset V\ot V$,
\cref{ConstructingParameters} and 
\cref{PiInLowDegree} imply that
$$\begin{aligned}
  \alpha'(r)
  &=\iota_{_A}\,  \mu_1\, (\pi_{_\A}\ot\pi_{_\A}) (r) 
=\iota_{_A} \, \pi^*(\alphax+\lambdax) (r)\\
&=\iota_{_A} (\alphax+\lambdax)\, \pi(r)\\
&=\iota_{_A} (\alphax+\lambdax)(1_S\ot r \ot 1_S\ot 1_H\ot 1_H)
\\ 
&=\iota_{_A}\, \alphax(1_S\ot r \ot 1_S\ot 1_H\ot 1_H)
\, 
\end{aligned}
$$
with identifications of \cref{liberty2}
as $\lambdax\equiv 0$ on $X_{2,0}$.
This is just $\alpha(r)$ under identifications $A\cong S\ot H$
and $V\ot H \cong k1_H\ot V \ot H$ and the definition of $\alphax$.
Likewise, from \cref{ConstructingParameters},
$$\begin{aligned}
  \beta'(r)
  &=\big(\mu_2
  -\lambda'\, \iota_{_A}\,  \mu_1 \big)(\pi_{_\A}\ot\pi_{_\A}) (r)
 \\ &=\big(\pi^*\, \betax
  - \lambda'\, \iota_{_A}\,
  \pi^*\, (\alphax+\lambdax)\big)(r)
 \\ &=\big(\betax- \lambda'\, \iota_{_A}\, (\alphax+\lambdax)\big)
(1_S\ot r \ot 1_S\ot 1_H\ot 1_H)\\
&=\beta(r)- \lambda'(1_H\ot \alpha(r))
\, .
\end{aligned}
$$
But $1_H\ot \alpha(r)$ lies in $k1_H\ot V\ot H$,
and by \cref{ConstructingLambda}
and \cref{PiInLowDegree},
\begin{equation}\label{BetaIsMu2}
\begin{aligned}
\lambda'(1_H\ot v\ot h)
&=
\mu_1(v \ot h) + \mu_1(1_H\ot v) h
=
\mu_1(v \ot h)\\
&=
\pi^*\, (\alphax+\lambdax)(v\ot h)
\\
&=(\alphax+\lambdax)\, \pi\, (v\ot h)
=0,
\end{aligned}
\end{equation}
and thus $\beta'\equiv \beta$ on $\RR_2$ as well.
Thus $\A_t$ is a graded deformation with fiber $A_t\big|_{t=1}$
isomorphic as a filtered algebra to a strong PBW deformation
$\cH_{\lambda',\alpha', \beta'}=\cH_{\lambda,\alpha,\beta}$.
\\
\end{proof}

\vspace{1ex}

\cref{ForwardsDirection,BackwardsDirection}
imply the next corollary giving conditions
for a quotient algebra $\chabl$
defined in 
Eqn.~\eqref{eqn:Hlab2} to be a strong PBW deformation.
Recall that if $H$ is finite dimensional, or, more generally, doubly  Noetherian, then
we may remove the adjective ``strong'' from the hypothesis by
\cref{PBW-iff-deformation}.

\vspace{2ex}

\begin{cor}\label{hom-condns} 
Let $\Ho$ be a Hopf algebra 
acting 
on a graded Koszul algebra $S$. 
A filtered algebra $\cH$  is a strong PBW deformation of $S\# \Ho$ if and only if 
$\cH=\chabl$
for some parameters $\alpha,\beta,\lambda$ satisfying 
  \begin{itemize}
    \item[(a)]\ \ $d^*(\alphax+\lambdax) = 0$, 
    \item[(b)]\ \ $[\alphax+\lambdax,\ \alphax + \lambdax]=2d^*\betax$, 
    \item[(c)]\ \ $[\lambdax + \alphax,\ \betax]= 0$ 
  \end{itemize}
  as cochains on the twisted product resolution 
$X=K_S\ott \BB_\Ho$, for bracket $[\ ,\ ]=[\ ,  \ ]_{\X}$.
  \end{cor}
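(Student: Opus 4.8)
The plan is to read the corollary off directly from the two preceding theorems, \cref{ForwardsDirection} and \cref{BackwardsDirection}, which together furnish the two implications of the asserted biconditional. No new computation is required; the content is entirely a matter of recognizing that the hypotheses and conclusions of those theorems coincide with the two directions of the corollary.

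For the forward implication, I would first recall from the discussion preceding \cref{eqn:Hlab2} in \cref{sec:smashproducts} that every strong PBW deformation $\cH$ of $S\# \Ho$ arises as a quotient of $T_{\! _H}(\Ho\ot V\ot \Ho)$ whose homogeneous version is identified with $S\#\Ho$, and hence necessarily has the form $\cH=\chabl$ for some parameter functions $\alpha,\beta,\lambda$. \cref{ForwardsDirection} then asserts precisely that these parameters may be chosen so that the associated cochains $\alphax,\betax,\lambdax$ on the twisted product resolution $X=K_S\ott\BB_\Ho$ satisfy Conditions (a), (b), and (c). For the reverse implication I would invoke \cref{BackwardsDirection} verbatim: given any parameter functions $\alpha,\beta,\lambda$ whose induced cochains satisfy (a), (b), and (c), the filtered algebra $\chabl$ of \cref{eqn:Hlab2} is a strong PBW deformation of $S\#\Ho$. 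Combining the two implications yields the stated equivalence.

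Since both halves are already established, there is no genuine obstacle; the corollary is immediate. The only point deserving attention is bookkeeping: one must confirm that the parameters $\alpha,\beta,\lambda$, the induced cochains $\alphax,\betax,\lambdax$ (defined via \cref{CochainAlpha,CochainLambda}), and the bracket $[\ ,\ ]_{\X}$ on $X$ (defined in \cref{eqn:Xbracket}) are used identically in \cref{ForwardsDirection} and \cref{BackwardsDirection}, so that the three conditions appearing in each theorem refer literally to the same objects. This consistency is transparent from the common setup established in \cref{sec:ParametersToCochains}, and it is what allows the two one-directional statements to be glued into a single biconditional.
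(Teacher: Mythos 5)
Your proposal is correct and matches the paper's own treatment: the corollary is stated as an immediate consequence of \cref{ForwardsDirection} (a strong PBW deformation has the form $\chabl$ with parameters satisfying (a)--(c)) and \cref{BackwardsDirection} (such parameters produce a strong PBW deformation), with the shared conventions of \cref{sec:ParametersToCochains} guaranteeing the two theorems speak of the same cochains and bracket. Nothing further is needed.
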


\vspace{1ex}
  
\subsection*{Lifting first multiplication maps}
  Recall that for any $k$-algebra $B$,
  a given $k$-linear map
  $\mu_1: B\otimes B \rightarrow B$
  {\em lifts} (or {\em integrates})
  to a graded deformation if there exists a graded deformation $B_t\cong B[t]$
  with multiplication as in~\cref{eqn:star-product}. 
  Conditions for $\mu_1$ to lift are given in terms of the differential
  $d$ on the bar resolution and the Gerstenhaber bracket $[\ , \ ]$
  on Hochschild cochains.
  In the proof of \cref{BackwardsDirection},
  we saw that the argument of Braverman and Gaitsgory
  \cite{Braverman-Gaitsgory}
  extends to our setting to show all higher obstructions to integrating a potential
  first multiplication map $\mu_1$ vanish
  for $S\# \Ho$:

  \vspace{1ex}
  
\begin{prop}
Let $\Ho$ be a Hopf algebra acting 
on a graded Koszul algebra $S$ as a Hopf module algebra. 
A $k$-linear map $$\mu_1: (S\# \Ho)\otimes (S\# \Ho) \longrightarrow
(S\# \Ho)$$
lifts to a graded deformation of $S\# \Ho$ if and only if there exists some
$k$-linear map \\ $\mu_2: (S\# \Ho)\otimes (S\# \Ho) \rightarrow (S\# \Ho)$ with
  \begin{itemize}
    \item[(a)]\ \ $d^*(\mu_1) = 0$, 
    \item[(b)]\ \ $[\mu_1, \mu_1]=2d^*\mu_2$, 
    \item[(c)]\ \ $[\mu_1, \mu_2] = 0$.
    \end{itemize}
  \end{prop}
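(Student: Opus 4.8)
The statement is the standard integrability criterion for a first multiplication map $\mu_1$, and my plan is to prove both implications by the obstruction calculus already assembled in the proofs of \cref{ForwardsDirection} and \cref{BackwardsDirection}, now phrased for an abstract pair of bar $2$-cochains $\mu_1,\mu_2$ on $\BB_{\sm}$ rather than the specific cochains $\pi^*(\alphax+\lambdax)$ and $\pi^*\betax$ coming from PBW parameters. The only genuinely new ingredient is that the resolution $X=K_S\ott\BB_\Ho$ is small enough in low homological and internal degree to convert cohomological statements into statements about cochains; this is exactly what allows the three conditions (a)--(c) to be simultaneously necessary and sufficient.

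For sufficiency I would reproduce the first half of the proof of \cref{BackwardsDirection} with $\mu_1,\mu_2$ in place of $\pi^*(\alphax+\lambdax)$ and $\pi^*\betax$. Condition~(a) makes $\mu_1$ a Hochschild $2$-cocycle, hence a first-level deformation; condition~(b) says the primary obstruction $\tfrac12[\mu_1,\mu_1]$ equals the coboundary $d^*\mu_2$, extending to a second-level deformation with second multiplication map $\mu_2$; and condition~(c) says the next obstruction $[\mu_1,\mu_2]$ vanishes as a cochain, so the deformation extends to third level (one may take $\mu_3=0$). For higher levels I would invoke the Braverman--Gaitsgory obstruction argument \cite[Proposition~1.5]{Braverman-Gaitsgory} exactly as in \cref{BackwardsDirection}: the obstruction to lifting an $i$-th level deformation ($i\ge 3$) lies in $\HH^{3,-(i+1)}(\sm)$, and pulling a representing cocycle back along the graded-degree-zero chain map $\iota\colon X\to\BB_{\sm}$ of \cref{HopfKoszulConversion} lands it in a cochain on $X_3$ of internal degree $-(i+1)\le -4$. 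Since $X_3$ is generated in internal degrees $\le 3$ (this is where Koszulity of $S$ enters), that pullback is $0$, so the obstruction class vanishes and the lift exists. Induction then integrates $\mu_1$ to a graded deformation.

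For necessity, suppose $\mu_1$ integrates to a graded deformation $(\sm)_t\cong(\sm)[t]$ with $\deg t=1$ and multiplication maps $\mu_i$ of internal degree $-i$. Expanding the associativity of $\star$ in powers of $t$ yields, at order $t$, that $\mu_1$ is a cocycle (condition~(a)); at order $t^2$, the identity $[\mu_1,\mu_1]=2d^*\mu_2$ (condition~(b)); and at order $t^3$, that $[\mu_1,\mu_2]=d^*\mu_3$ is a coboundary. To sharpen this last fact to the exact equality in~(c), I would transfer to $X$: since $\iota$ has graded degree $0$, the cochain $\iota^*\mu_3$ is a $2$-cochain on $X_2=X_{0,2}\oplus X_{1,1}\oplus X_{2,0}$ of internal degree $-3$, and $X_2$ is generated in internal degrees $\le 2$, so $\iota^*\mu_3=0$ and hence $\iota^*[\mu_1,\mu_2]=d^*\iota^*\mu_3=0$. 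Because $\pi\iota=1_X$ exhibits $X$ as a deformation retract of $\BB_{\sm}$, the kernel of $\iota^*$ is a contractible complex, and I would use this, together with the freedom to alter $\mu_2$ by a $2$-cocycle without disturbing~(b), to select a $\mu_2$ for which the bracket vanishes identically, thereby realizing~(c).

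The step I expect to be the main obstacle is precisely this last one: general deformation theory delivers $[\mu_1,\mu_2]$ only as a coboundary, whereas~(c) demands it be $0$ on the nose. The remedy must come from the smallness of $X$ rather than from $\BB_{\sm}$, on which cochains of internal degree $-3$ are generally nonzero; so the argument has to move carefully between the two resolutions via $\iota^*$ and $\pi^*$, exploiting both $\pi\iota=1_X$ and the internal-degree bounds on $X_2$ and $X_3$, in order to pin down a $\mu_2$ satisfying the exact identity. An alternative route for necessity is to pass through \cref{DeformationImpliesPBW} and \cref{ForwardsDirection} to extract PBW parameters satisfying the $X$-versions of (a)--(c), and then transport these back by $\pi^*$; but reconciling that output with a prescribed bar cochain $\mu_1$ reintroduces the same comparison between $X$ and $\BB_{\sm}$, so I view the degree argument on $X_2$ as the essential mechanism either way.
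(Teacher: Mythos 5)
Your sufficiency direction is correct and is exactly the paper's argument: (a)--(c) make $(\mu_1,\mu_2,\mu_3=0)$ a third-level graded deformation, and the Braverman--Gaitsgory step from the proof of \cref{BackwardsDirection} (for $i\geq 3$ the obstruction lies in $\HH^{3,-(i+1)}(S\#\Ho)$; composing a representing cocycle with the degree-zero chain map $\iota$ of \cref{HopfKoszulConversion} gives a cochain on $X_3$ of internal degree at most $-4$, hence zero, and since $\iota^*$ is injective on cohomology the obstruction is a coboundary) integrates it to all orders. The genuine gap is in necessity, precisely where you flag it, and your proposed repair does not close it. From a graded deformation you correctly get $[\mu_1,\mu_2]=d^*\mu_3$, and correctly observe $\iota^*\mu_3=0$ (a $2$-cochain on $X_2$ of internal degree $-3$), hence $\iota^*[\mu_1,\mu_2]=0$. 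But ``contractibility of $\ker\iota^*$ plus the freedom to alter $\mu_2$ by a $2$-cocycle'' is not an argument for $[\mu_1,\mu_2]=0$ on $\BB_{S\#\Ho}$: replacing $\mu_2$ by $\mu_2+\eta$ with $d^*\eta=0$ changes the bracket by $[\mu_1,\eta]$, and nothing shows the specific coboundary $-d^*\mu_3$ has this form; replacing $\mu_2$ by $\mu_2+d^*\xi$ is no better, since then the bracket changes by $\pm d^*[\mu_1,\xi]$ (using $d^*\mu_1=0$) and remains a generally nonzero coboundary. Acyclicity of $\ker\iota^*$ only re-expresses $[\mu_1,\mu_2]$ as $d^*\nu$ with $\iota^*\nu=0$, which is where you started.

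The mechanism that upgrades ``coboundary'' to ``zero'' in this paper is a degree bound that exists only on $X$, never on the bar resolution: in the proof of Condition~(c) of \cref{ForwardsDirection} the bracket equals $d^*\xi$ for a $2$-cochain $\xi$ on $X_2$ of graded degree $-3$, and $X_2$ is generated in internal degrees $\leq 2$, so $\xi=0$; by contrast $(\BB_{S\#\Ho})_2$ contains elements of every internal degree, so $2$-cochains of degree $-3$ there (for instance $\mu_3$ itself) need not vanish. Consequently the statement is proved by the paper's methods when, as everywhere else in the paper (\cref{hom-condns}, \cref{ForwardsDirection}, \cref{BackwardsDirection}), the exact equalities (b) and (c) are read as identities of cochains obtained on the twisted resolution $X$, i.e.\ after composing with $\iota$: with that reading, necessity follows at once from your own degree observation, taking $\mu_2$ to be the deformation's second multiplication map with no modification at all, and sufficiency still goes through because vanishing after $\iota^*$ forces the relevant obstruction cocycles on $\BB_{S\#\Ho}$ to be coboundaries. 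If one insists on the literal reading --- exact equalities of Gerstenhaber brackets on the bar resolution --- then the ``only if'' direction requires an argument that neither your proposal nor the paper's one-line appeal to \cref{BackwardsDirection} supplies.
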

  
  \section{Explicit PBW conditions for
  Hopf actions on Koszul algebras}
\label{sec:DHAs}
We again fix a Hopf algebra $H$ acting on a graded Koszul
algebra $S=T(V)/(\R)$.
Using homological
techniques of earlier sections,
we generalize~\cite[Theorem 2.5]{SW-Koszul}
and describe all strong PBW deformations of $S\# H$
(over $H$)
in terms of explicit conditions
on parameter functions.

Recall that we define
$\chabl$ as the quotient algebra
(see \cref{eqn:Hlab,smash-relns,eqn:Hlab2}) 
\begin{equation}
   \chabl = T_{\! _H}(\Ho\ot V \ot \Ho)/ (\P\cup \P')
\end{equation}
for vector spaces
$$
\begin{aligned}
  \P & =  \{ r -\alpha(r)-\beta(r) : r\in \R\} & & \text{(deformed
    Koszul relations)}\\
 \P' & =  \{ r - \lambda(r) : r\in \R' \} & & \text{(deformed smash relations)}
\end{aligned}
$$
determined by some arbitrary $k$-linear parameter functions 
\[
  \alpha: \R\rightarrow V\ot \Ho , \ \ \
   \beta: \R \rightarrow \Ho , \ \ \
   \lambda: \R'\rightarrow \Ho .
 \]

 We identify $\lambda$ with a function on $\bar H\ot V$ to give the
 PBW conditions compactly.
 See also \cref{thm:main-Hopf-right}
for a right (versus left here) version 
(identifying $\lambda$ with a function on $V\ot \bar H$ instead)
 for easier comparison
with results in the literature.
To simplify notation,
we apply the multiplication map $T_k(A)\rightarrow A$,
$\ a_1\ot\cdots\ot a_n\mapsto a_1\cdots  a_n$,
to each summand in each condition below after identifying $V$ and $H$ with
subspaces of $A$.

\vspace{1ex}

\begin{thm}    
\label{thm:main-Hopf-left}
A filtered algebra $\cH$ is a strong PBW deformation of $S\# \Ho$ if
and only if $\cH= \chabl$ for some
linear parameter functions 
$\alpha,\beta,\lambda$ satisfying
\begin{itemize}
\item[$(1)$] $1\ot\lambda - \lambda (m\ot 1) + (\lambda\ot 1)(1\ot\tau) = 0$,
  \item[$(2)$] 
  $\lambda (\lambda\ot 1) + (\lambda\ot 1)(1\ot\alpha) 
    =     (1\ot\beta) - (\beta\ot 1)(1\ot \tau)(\tau\ot 1)$,
  \item[$(3)$] $(1\ot \alpha) - (\alpha\ot 1)(1\ot\tau)(\tau\ot 1)
  = (\lambda\ot 1)   + (1\ot\lambda)(\tau\ot 1)$,
       \item[$(4)$]
       $(\alpha\ot 1)\big((1\ot\tau)(\alpha\ot 1) - (1\ot \alpha)\big) 
       - (1\ot \lambda)(\alpha\ot 1) 
       = 1\ot\beta - \beta\ot 1$,
  \item[$(5)$] $(\beta\ot 1)\big((1\ot\tau)(\alpha\ot 1) - (1\ot\alpha)\big)
    = -\lambda (\beta\ot 1)$,
    \item[(6)] $\alpha\ot 1 - 1\ot\alpha =0$,
    \end{itemize}
    upon application of the multiplication map 
    $T_k(A)\rightarrow A$ to each summand,
    for $\lambda:\R'\rightarrow H$
identified with the map $H\ot V\rightarrow H$, 
$h\ot v \mapsto h\ot v \ot 1_H-1_H\ot \tau(h\ot v)$.
    
    Here, the maps in (1) are defined on $H\ot H\ot V$,
the maps in (2) and (3) on $H\ot \R$,
the map in (6) on $(V\ot \R)\cap (\R\ot V)
\subset V\ot V\ot V$, and (6) implies that the maps in (4) and (5) are
defined
on $(V\ot \R)\cap (\R\ot V)$.
  \end{thm}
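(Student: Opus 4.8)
The plan is to reduce the theorem to the three cohomological conditions already in hand and then convert those into the six explicit algebraic conditions by a systematic degree analysis. By \cref{hom-condns}, a filtered algebra $\cH$ is a strong PBW deformation of $S\#\Ho$ if and only if $\cH=\chabl$ for parameters whose associated cochains satisfy (a)~$d^*(\alphax+\lambdax)=0$, (b)~$[\alphax+\lambdax,\alphax+\lambdax]=2d^*\betax$, and (c)~$[\lambdax+\alphax,\betax]=0$ on the twisted product resolution $X=K_S\ott\BB_\Ho$, with bracket $[\ ,\ ]_{\X}$. Since each of these is an equality of $2$-cochains, i.e.\ of $A$-bimodule maps on $X_3=X_{0,3}\oplus X_{1,2}\oplus X_{2,1}\oplus X_{3,0}$ (see \cref{eqn:xij}), I would prove the theorem by evaluating (a), (b), (c) separately on each summand $X_{i,j}$ and reading off the resulting identity as one of the conditions $(1)$--$(6)$. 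Because each homological condition is equivalent to the conjunction of its evaluations on the summands, the system (a)$\wedge$(b)$\wedge$(c) will be equivalent to $(1)\wedge\cdots\wedge(6)$.

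The bookkeeping that organizes this is graded degree. Each summand $X_{i,j}$ lies in graded degree $i$, its Koszul factor carrying $\tilde K_i$ in degree $i$ while the bar factors $\bar{\Ho}^{\ot j}$ contribute graded degree $0$; meanwhile $\alphax+\lambdax$ has graded degree $-1$ and $\betax$ has graded degree $-2$. Hence the cochain in (a) has graded degree $-1$, the bracket in (b) has graded degree $-2$, and the bracket in (c) has graded degree $-3$. Counting output degrees forces most summands to contribute nothing: (a) can be nonzero only on $X_{1,2}$, $X_{2,1}$, $X_{3,0}$, condition (b) only on $X_{2,1}$ and $X_{3,0}$, and condition (c) only on $X_{3,0}$. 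I expect the nonvanishing evaluations to match as follows: (a) yields $(1)$ on $X_{1,2}$ (domain $\Ho\ot\Ho\ot V$, output degree $0$), $(3)$ on $X_{2,1}$ (domain $\Ho\ot\R$, output degree $1$), and $(6)$ on $X_{3,0}$ (domain $(V\ot\R)\cap(\R\ot V)$, output degree $2$); (b) yields $(2)$ on $X_{2,1}$ (output degree $0$) and $(4)$ on $X_{3,0}$ (output degree $1$); and (c) yields $(5)$ on $X_{3,0}$ (output degree $0$). The domains and output degrees predicted by this analysis agree exactly with those recorded in the statement, which is a useful consistency check.

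To carry out each evaluation I would use the low-degree values of the chain maps $\iota\colon X\to\BB_A$ and $\pi\colon\BB_A\to X$ of \cref{HopfKoszulConversion} (recorded in \cref{ExplicitValues}), together with the differential $d\colon X_3\to X_2$ inherited from the Koszul and reduced bar differentials. For (a) the computation is direct: $d^*(\alphax+\lambdax)$ is precomposition with $d$, so on each summand it expands into the two sides of $(1)$, $(3)$, $(6)$ once the twisting map $\tau$ is used to move bar factors of $\Ho$ past Koszul factors of $V$. For the bracket conditions (b) and (c) I would follow the mechanism of \cref{ForwardsDirection,BackwardsDirection}: transport to $\BB_A$ by $\pi^*$, compute the circle products of \cref{circle-prod-defn} there, and pull back by $\iota^*$ via \cref{eqn:Xbracket}, exploiting the identity $\pi^*\iota^*(\mu)=\mu$ on $\ima\iota$ to replace $\pi^*\iota^*(\alphax+\lambdax)$ and $\pi^*\iota^*(\betax)$ by the concrete multiplication maps $\mu_1,\mu_2$ determined by $\alpha,\beta,\lambda$. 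Expanding the circle products and inserting $\tau$ at each crossing produces the composite expressions in $(2)$, $(4)$, $(5)$.

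The main obstacle is the degree-$3$ work on $X_{3,0}$, whose Koszul factor is $\tilde K_3=(V\ot\R)\cap(\R\ot V)$ (see \cref{Koszulterms}). Here I need the degree-$3$ values of $\iota$ and $\pi$ and must track $\tau$ through compositions such as $(1\ot\tau)(\alpha\ot 1)$, which is where the asymmetric $\tau$-factors in $(4)$ and $(5)$ originate. A related subtlety is that $\alpha\ot 1$ and $1\ot\alpha$ need not each be well defined on the intersection $\tilde K_3$; condition $(6)$, obtained from (a) on $X_{3,0}$, must therefore be established first so that the maps appearing in $(4)$ and $(5)$ are meaningful, exactly as flagged in the statement. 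Once these degree-$3$ identities are verified, assembling the contributions across all summands yields the equivalence of (a)--(c) with $(1)$--$(6)$ and hence the theorem.
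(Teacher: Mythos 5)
Your proposal is correct and follows essentially the same route as the paper's proof: invoke \cref{hom-condns}, then evaluate conditions (a), (b), (c) summand-by-summand on $X_3=X_{0,3}\oplus X_{1,2}\oplus X_{2,1}\oplus X_{3,0}$ using graded-degree vanishing, the low-degree values of $\iota$ and $\pi$, and the identity $\pi^*\iota^*(\mu)=\mu$ on $\ima\iota$, with exactly the correspondence (a)$\leftrightarrow$(1),(3),(6), (b)$\leftrightarrow$(2),(4), (c)$\leftrightarrow$(5) that the paper establishes. Your observation that (6) must be established first so that the maps in (4) and (5) are well defined on $(V\ot\R)\cap(\R\ot V)$ is precisely the point the paper handles via \cref{DefinedOn}.
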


  \vspace{2ex}
  
  Before proving the theorem, we note that it implies
  a characterization for Hochschild cocycles which lift (i.e.,
  integrate) to the first multiplication map of a graded deformation
  by the proofs of \cref{ForwardsDirection}
  (see
  \cref{FirstMultMapToLambda,MultiplicationMapsAreAlphaBeta})
  and \cref{BackwardsDirection}
  (see
 \cref{PiOnR',ConstructingParameters,BetaIsMu2}).
  \begin{cor}\label{WhichCocyclesLift}
      A Hochschild $2$-cocycle $\mu_1$ on $S\# H$
      lifts to a graded deformation of $S\#H$ if and only if there exist
      parameter functions $\alpha$, $\beta$, $\lambda$ satisfying
      the six conditions of \cref{thm:main-Hopf-left} with
      $$\begin{aligned}
      \mu_1(r) = \alpha(r)
      \quad\text{ and }\quad
      \mu_1\big(h\ot v - \tau(h\ot v)\big)
      = \lambda\big(h\ot v\ot 1_H -1_H\ot \tau(h\ot v)\big)
\end{aligned}
$$
for all $r$ in $\R$, $h$ in $H$, and $v$ in $V$.
These values then completely determine $\mu_1$
up to a coboundary,
and the second multiplication map $\mu_2$
of the deformation
may be chosen as the Hochschild $2$-cochain
with
$$
\mu_2(r)= \beta(r)
\quad \text{ for all }
r \text{ in } \R\, .
$$
 \end{cor}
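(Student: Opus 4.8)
The plan is to recognize \cref{WhichCocyclesLift} as a translation of the homological equivalence in \cref{hom-condns} (equivalently the explicit conditions of \cref{thm:main-Hopf-left}) into the language of liftable Hochschild $2$-cocycles, using the dictionary between multiplication maps on the reduced bar resolution $\BB_A$ of $A=S\#\Ho$ and the parameter functions $\alpha,\beta,\lambda$ on the twisted product resolution $X=K_S\ott\BB_\Ho$. The preceding proposition already records that $\mu_1$ lifts to a graded deformation of $A$ if and only if there is a $2$-cochain $\mu_2$ on $\BB_A$ with $d^*\mu_1=0$, $[\mu_1,\mu_1]=2d^*\mu_2$, and $[\mu_1,\mu_2]=0$, so the task reduces to matching such a pair $(\mu_1,\mu_2)$ with parameters $(\alpha,\beta,\lambda)$ satisfying the six conditions together with the stated value formulas.

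First I would treat the forward implication. Assuming $\mu_1$ lifts, the preceding proposition produces $\mu_2$, and the graded deformation $A_t$ of $A$ determined by $\mu_1,\mu_2$ has, by \cref{PBW-iff-deformation} together with \cref{DeformationImpliesPBW}, a fiber at $t=1$ isomorphic to a strong PBW deformation $\chabl$. By \cref{hom-condns} the parameters $\alpha,\beta,\lambda$ of this $\chabl$ satisfy the six conditions of \cref{thm:main-Hopf-left}. The value formulas are then exactly the content of \cref{FirstMultMapToLambda} and \cref{MultiplicationMapsAreAlphaBeta} from the proof of \cref{ForwardsDirection}, which identify $\alphax+\lambdax=\iota^*\mu_1$ and $\betax=\iota^*\mu_2$ as cochains on $X_2$; restricting to $\R\subset X_{2,0}$ and to the smash relations in $X_{1,1}$ gives $\mu_1(r)=\alpha(r)$, $\mu_1(h\ot v-\tau(h\ot v))=\lambda(h\ot v\ot 1_H-1_H\ot\tau(h\ot v))$, and $\mu_2(r)=\beta(r)$.

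For the converse, I would start from parameters satisfying the six conditions with the prescribed values, invoke \cref{thm:main-Hopf-left} (or \cref{hom-condns}) to conclude that $\chabl$ is a strong PBW deformation, and apply \cref{PBW-iff-deformation} to realize it as the $t=1$ fiber of a graded deformation $A_t$. Following the proof of \cref{BackwardsDirection}, the first two multiplication maps of $A_t$ may be taken to be $\mu_1'=\pi^*(\alphax+\lambdax)$ and $\mu_2'=\pi^*\betax$, and \cref{PiOnR'}, \cref{ConstructingParameters}, and \cref{BetaIsMu2} show $\mu_1'(r)=\alpha(r)$, $\mu_1'(h\ot v-\tau(h\ot v))=\lambda(\cdots)$, and $\mu_2'(r)=\beta(r)$. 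It remains to see that the given $\mu_1$, which agrees with $\mu_1'$ on $\R$ and on the smash relations, itself lifts. Here I would use a degree argument: since $\mu_1$ is graded of degree $-1$ while $X_{0,2}$ lives in graded degree $0$, the cochain $\iota^*\mu_1$ vanishes on $X_{0,2}$ and is supported on $X_{1,1}\oplus X_{2,0}$, where it is completely determined by the values of $\mu_1$ on the smash relations and on $\R$; hence $\iota^*\mu_1=\iota^*\mu_1'$. As $\iota$ is a chain homotopy equivalence (both $X$ and $\BB_A$ being projective resolutions of $A$ lifting the identity, with $\pi\iota=1_X$ in \cref{HopfKoszulConversion}), the induced map $\iota^*$ is an isomorphism on Hochschild cohomology, so $\iota^*\mu_1=\iota^*\mu_1'$ forces $[\mu_1]=[\mu_1']$ in $\HH^2(A)$; thus $\mu_1$ differs from the liftable cocycle $\mu_1'$ by a coboundary and lifts as well. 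This same degree-and-cohomology argument yields the claim that the prescribed values determine $\mu_1$ up to a coboundary, and the identity $\mu_2(r)=\beta(r)$ is read off from $\betax=\iota^*\mu_2$.

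The main obstacle, exactly as in \cref{ForwardsDirection,BackwardsDirection}, is not the bookkeeping of the value formulas but the passage between the Gerstenhaber bracket on $\BB_A$ and the transferred bracket $[\ ,\ ]_{\X}$ on $X$: because $\pi^*\iota^*\mu_1$ equals $\mu_1$ only on $\ima\iota$ (the key observation \cref{KeyObservation}), matching the bracket conditions on the two resolutions requires the careful support analysis on $X_{2,1}$ and $X_{3,0}$ already carried out there. I would therefore lean on the explicit low-degree chain-map values of \cref{ExplicitValues} rather than reprove them, so that the bracket identities transfer correctly and the equivalence with the six explicit conditions of \cref{thm:main-Hopf-left} is clean.
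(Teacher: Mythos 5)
Your proposal is correct and follows essentially the same route as the paper: the paper proves this corollary precisely by combining \cref{thm:main-Hopf-left} with the proofs of \cref{ForwardsDirection} (via \cref{FirstMultMapToLambda,MultiplicationMapsAreAlphaBeta}) and \cref{BackwardsDirection} (via \cref{PiOnR',ConstructingParameters,BetaIsMu2}), exactly as you do. Your added degree-and-support argument—that $\iota^*\mu_1$ vanishes on $X_{0,2}$ and is determined on $X_{1,1}\oplus X_{2,0}$ by the prescribed values, so that $\iota^*$ being a quasi-isomorphism forces $\mu_1$ to be cohomologous to the constructed liftable cocycle—is just an explicit justification of the ``determined up to a coboundary'' assertion that the paper leaves implicit, and it is sound.
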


 \vspace{2ex}
 
  \begin{proof}[Proof of \cref{thm:main-Hopf-left}]
    First observe that we may replace $H$ by $\bar H$ in the
    domains of the maps in the six
    conditions for $H/k1_H\cong \bar H\subset H$:
    A short computation confirms that Conditions (1), (2), and (3)
    trivially hold
    if we replace any $H$ in
    a tensor component of the domain
by $k1_H$ since $\lambda\equiv 0$ on $k1_H\ot V$
and
$\tau$ is just the swap map on any input $1_H\ot v$;
hence we need only check 
Condition (1)
   on domain $\bar H\ot \bar H \ot V$ and Conditions (2) and (3)
   on domain $\bar H\ot \R$.

  We show that  Conditions (a), (b), and (c) of \cref{hom-condns}
    are equivalent
    to the six conditions of \cref{thm:main-Hopf-left}
    (with $\bar H$ instead of $H$ in the domains).
 
Again, let $m=m_{A}:A\ot A\rightarrow A$ denote the multiplication map on
  the smash product $A=S\# \Ho$, which involves the twisting map
  $\tau: \Ho\ot S \rightarrow S\ot \Ho$,
  $h\ot  s \mapsto \sum \, ^{h_1}s \ot h_2$.

 {\bf Identifications.}
 Since all functions on resolutions of $A$
  involved 
  are $A$-bimodule 
  maps, we need only consider $A$-bimodule bases of the various free resolutions.
We thus suppress superfluous tensor factors of $1$
and make certain vector space identifications to avoid excessively
technical notation.
When working in the reduced
bar resolution $\BB_A$, we identify 
\begin{equation*} 
  \text{ 
    $V\ot \bar H$, \ $\bar H \ot V$, \ $V\ot V$, \ $\R'$, and $\R$}
\end{equation*}
    with 
    $k$-vector subspaces of
    $$
    (\BB_A)_2 = A\ot \bar A \ot \bar A \ot A
    \ \subset\  A\ot A \ot A \ot A = (\B_A)_2
    $$
    by ignoring extra tensor factors of $1_A$ on the outside
    and viewing $V,\bar H$ as subsets of $\bar A\subset A$,
    see \cref{liberty1,liberty2,liberty3,liberty4}.
    We make similar identifications when working in $(\BB_A)_3$,
    viewing $\R\ot V$ and $\bar H\ot V \ot \bar H$ as subspaces of
    $(\BB_A)_3$,
    for example. 

  Likewise,  by \cref{LibertyOnX},
  each $X_{i,j}$ has an $A$-bimodule basis consisting of
  elements in $1_S\ot V^{\ot i}\ot 1_S\ot 1_H\ot \bar H^{\ot j}\ot 1_H$,
  so we identify the $A$-bimodule $X_{i,j}$
  with the $k$-vector space $V^{\ot i}\ot \bar H^{\ot j}$
    to avoid extra factors of $1_S$ and $1_H$.
  Thus we regard
  $H\ot H$ as a subspace of $X_{0,0}$,
  $V\ot \bar H$ as a subspace of $X_{1,1}$,
  $\R$ as a subspace of $X_{2,0}$,
$\R\ot \bar H$ as a subspace of $X_{2,1}$, and 
$ (\R\ot V)\cap(V\ot \R)$ as a subspace of $X_{3,0}$,
and each of these subspaces provides an $A$-bimodule basis.

We also identify as usual
any $A$-bimodule homomorphism 
$A\ot U\ot A\rightarrow U'$
with a $k$-linear map
$U\rightarrow U'$ of the same name
for any $k$-vector space $U$ and $A$-bimodule $U'$.
For any $n$-cochain $\omega$ on $X$ or on $\BB_A$,
we view in this way 
$\omega\ot 1$ and $1\ot \omega$ as $(n+1)$-cochains
on $\BB_A$.
We extend any cochain on $\BB_A$ to one on $\B_A$
by composing first with the projection $\pr_{\BB_A}:\B_A\rightarrow \BB_A$.

{\bf One-sided versions of parameter function $\lambda$.}
  We define abbreviated (one-sided) versions of the parameter function
  $\lambda:\R'\rightarrow H$
  to simplify expressions: Consider
\begin{equation}\label{lambda'}
\begin{aligned}
& \lambda_{_L}: \ H\ot V\longrightarrow H, \quad 
& h\ot v \mapsto\lambda\big(h\ot v\ot 1_H - 1_H\ot \tau(h\ot v)\big)
\\
& \lambda_{_R}: V\ot H\longrightarrow H, \quad 
& v\ot h \mapsto\lambda\big(1_H\ot v\ot h -  \itau(v\ot h)\ot 1_H \big)
\end{aligned}
\end{equation}
so that  
\begin{equation}\label{LambdaSwitchSides}
  \lambda_{_L}=-\lambda_{_R}\circ \tau
  \quad\text{ and }\quad
  \lambda_{_R}=-\lambda_{_L}\circ \itau
  \, .
  \end{equation}
For parameter functions $\lambda:\R'\rightarrow H$,
$\alpha:\R\rightarrow V\ot H$, 
and $\beta:\R\rightarrow H$, we may write under these identifications
the corresponding cochains $\lambdax, \alphax, \betax$ on $X$ (see
\cref{CochainAlpha,CochainLambda})
with a slight abuse of notation as
\begin{equation}\label{CochainsToParameters}
\begin{aligned}
\lambdax &= \lambda_{_R}  = -\lambda_{_L}\itau
&&\quad\text{ as maps } V\ot \bar H\rightarrow H,
\\
\alphax &=m\, \alpha 
&&\quad\text{ as maps } \R\rightarrow V\ot H\rightarrow A,
\quad\text{ and }
\\
 \betax &=\beta 
 &&\quad\text{ as maps } \R\rightarrow H
 \, .
\end{aligned}
\end{equation}

We use the chain map $\pi: \BB_A\rightarrow X$
of \cref{HopfKoszulConversion}
and express the 2-cocycle
$\pi^* ( \lambdax ) : A\ot \bA\ot\bA\ot A \rightarrow A$
on $(\BB_A)_2$ in terms of $\lambda_{_L}$
with a slight abuse of notation:
  By \cref{PiOnR'},
  \begin{equation}\label{PiLambdax}
  \begin{aligned}
    \pi^* (\lambdax)\
    & = & \lambda_{_L}
  &\quad\text{ as maps } \bar H\ot V \longrightarrow H\, ,
  \quad\text{ and }\\
  \pi^* (\lambdax) \itau\  & = & - \lambda_{_R}
 &\quad\text{ as maps } V\ot \bar H \longrightarrow H
 \,  .
\end{aligned}
\end{equation}

{\bf The differential on $X$.}
The differential $d$ on
$X$ is induced
from the differential on
$\BB_S\ott \BB_H$ constructed from the reduced bar resolutions for $S$
and $H$ (see \cite{TTP-AWEZ}).
On $(\BB_S)_1\ot (\BB_H)_2\subset \BB_S\ott \BB_H$,
the differential is given by,
for $h$, $h'$ in $\bar H$ and $v$ in $V$,
$$
\begin{aligned}
 1_S &\ot v\ot 1_S \ot 1_H \ot h\ot h'\ot 1_H
  \\& \longmapsto\ \ \
\big( (v\ot 1_S-1_S\ot v) \ot (1_H\ot h\ot h'\ot 1_H)\big) 
\\&\ \ \ \ \ \
-\big( (1_S\ot v\ot 1_S)\ot (h\ot h'\ot 1_H 
- 1_H\ot h\cdot_{\bar H} h'\ot 1_H+1_H\ot h\ot h')\big)
\, .
\end{aligned}
$$
Thus we may write $d$ succinctly on $X_{1,2}$ under the above
identifications
in terms of the $A$-bimodule structure of $X$ recorded by 
left and right module maps
(see \cref{TwistedResolutionAction})
$$\rho_{_L}:A\ot X\rightarrow X
\quad\text{ and }\quad
\rho_{_R}: X\ot A\rightarrow X
\, .
$$
As a map $V\ot \bar H\ot \bar H\subset X_{1,2}\rightarrow
X_{1,1}\oplus X_{0,2}$,
\begin{equation}\label{d}
\begin{aligned}
  d =
  \rho_{_L}-
  \rho_{_R}(\pr_{\bar H}\ot \pr_{\bar H}\ot 1)
  (1\ot\itau)(\itau\ot 1) 
  - \rho_{_L}(\itau\ot 1)+ (1\ot \pr_{\bar H})
  ( 1\ot m)-\rho_{_R} 
\, 
\end{aligned}
\end{equation}
for $\pr_{\bar H}: H\rightarrow \bar H$ the projection map
(where $\rho_{_L}$ and $\rho_{_R}$ in the first two terms
indicate the $A$-module structure on $X_{0,2}$
but in the third and last terms
indicate the $A$-module structure on $X_{1,1}$).
But any $2$-cochain $\gammax$ on $X$
is an $A$-bimodule map
($\gammax\,\rho_{_L}=\rho_{_L}(1\ot \gammax) 
=m(1\ot \gammax)$, for example) and 
thus, on $V\ot \bar H\ot \bar H\subset X_{1,2}$,
the cocycle
$d^*(\gammax)$ is
\begin{equation*}
\begin{aligned}
  \gammax \big(\rho_{_L}-
  &
  \rho_{_R} (\pr_{\bar H}\ot \pr_{\bar H}\ot 1)
  (1\ot\itau)(\itau\ot 1) 
  - \rho_{_L}(\itau\ot 1)+ (1\ot \pr_{\bar H})
  (1\ot m)-\rho_{_R} \big) 
\\ &=\ \ \
m(1\ot \gammax)-
m(\gammax\ot 1) (\pr_{\bar H}\ot \pr_{\bar H}\ot 1)
(1\ot\itau)(\itau\ot 1)
\\ & \ \ \
- m(1\ot \gammax)(\itau\ot 1)+\gammax (1\ot \pr_{\bar H})
(1\ot m)-(\gammax\ot 1) 
\, ,
\end{aligned}
\end{equation*}
i.e.,
\begin{equation}\label{donX12}
\begin{aligned}
  d^*(\gammax) 
 =\ \ \ 
&(1\ot \gammax)-
(\gammax\ot 1) (\pr_{\bar H}\ot \pr_{\bar H}\ot 1)
(1\ot\itau)(\itau\ot 1) 
\\  - &(1\ot \gammax)(\itau\ot 1)+\gammax  (1\ot \pr_{\bar H}) (1\ot m)
-(\gammax\ot 1) 
\end{aligned}
\end{equation}
 upon projection to $A$ using the multiplication map
 $m:A\ot A\rightarrow A$.
 
Now consider the differential on $X_{2,1}$
induced from that on $(\BB_S)_2\ot_{\tau} (\BB_H)_1$:
$$
\begin{aligned}
1_S\ot v\ot v'\ot 1_S \ot 1_H\ot h\ot 1_H
\ \mapsto\ \ \ 
 &(v\ot v'\ot 1_S)\ot (1_H\ot h\ot 1_H)\\
-&(1_S\ot (v\cdot_S v')\ot 1_S)\ot (1_H\ot h\ot 1_H)\\
+&(1_S\ot v\ot v')\ot (1_H\ot h\ot 1_H)\\
 +&(1_S\ot v\ot v'\ot 1_S)\ot (h\ot 1_H)\\
 - &(1_S\ot v\ot v'\ot 1_S)\ot (1_H\ot h) 
 \, . 
\end{aligned}
$$
We write $d$ on $X_{2,1}$ again using the $A$-bimodule structure, as
above,
but on
$\R\ot \bar H\subset X_{2,1}$:
\begin{equation}
\begin{aligned}
d 
 &=
 \rho_{_L} - (m\ot 1) + \rho_{_R} (1\ot \pr_{\bar H}\ot 1)
 (1\ot \itau) + \rho_{_L}(\itau\ot 1 
)(1\ot \itau) - \rho_{_R} 
\end{aligned}
\end{equation}
where $\rho_{_L}$ 
and $\rho_{_R}$ in the first and third terms
give the $A$-bimodule structure on $X_{1,1}$
but in the last two terms
give the $A$-bimodule structure on $X_{2,0}$.
Note the second summand is $0$ as $m$ vanishes on $\R$.
Then 
for any cochain $\gammax$ on $X_2$,
on $\R\ot \bar H\subset X_{2,1}$,
the cocycle
$d^*\gammax$ is
\begin{equation*}
\begin{aligned}
 & \gammax \big(\rho_{_L} +
  \rho_{_R}(1\ot \pr_{\bar H}\ot 1)
 (1\ot \itau) + \rho_{_L}(\itau\ot 1 )(1\ot \itau) - \rho_{_R} \big)
\\ &=
m(1\ot \gammax) + m(\gammax\ot 1) (1\ot \pr_{\bar
  H}\ot 1)
(1\ot \itau)
+
m(1\ot \gammax)(\itau\ot 1 )(1\ot \itau) - m(\gammax \ot 1)\, ,
\end{aligned}
\end{equation*}
i.e.,
\begin{equation}
\begin{aligned}\label{donX21}
d^*\gammax
&= (1\ot \gammax) + (\gammax \ot 1) (1\ot \pr_{\bar H}\ot 1)
(1\ot \itau) +
(1\ot \gammax)(\itau\ot 1 )(1\ot \itau) - (\gammax \ot 1)
\end{aligned}
\end{equation}
upon projection to $A$ using the multiplication map
 $m:A\ot A\rightarrow A$.

{\bf Condition (a).}
The cochain $d^*(\alphax+\lambdax)$ 
is the zero function 
if and only if it is 0 on each of $X_{3,0}$, $X_{2,1}$, $X_{1,2}$, and 
$X_{0,3}$.
Since $d(X_{0,3})$
trivially intersects $X_{1,1}\oplus X_{2,0}$ on which
$\alphax +\lambdax$ is supported, $d^*(\alphax +\lambdax)\equiv 0$ on $X_{0,3}$.
Similarly, since $\alphax\equiv 0$ on $X_{0,2}\oplus X_{1,1}$,
$d^*(\alphax)\equiv 0$ on $X_{1,2}$.

{\bf Condition (a), differential on $X_{1,2}$.}
Since $\lambdax|_{X_{0,2}}\equiv 0$, \cref{donX12} gives that
on $X_{1,2}$
$$
\begin{aligned}
  d^* \lambdax 
  & =
  -(1\ot \lambdax)(\itau\ot 1)+\lambdax (1\ot \pr_{\bar H})
  (1\ot m)-(\lambdax\ot 1)
\, ,
\end{aligned}
$$
and thus
$d^*(\alphax+\lambdax)|_{X_{1,2}}\equiv 0$ exactly when
(see \cref{TauOnScalars})
\begin{equation}\label{SwitchSides}
\begin{aligned}
  0=(1\ot \lambda_{_R})(\itau\ot 1)
  -
  \lambda_{_R}(1\ot m)+(\lambda_{_R}\ot 1)
\quad\text{ on } V\ot \bar H\ot \bar H 
\,
\end{aligned}
\end{equation}
upon projection to $A$ using the multiplication map $m:A\ot 
A\rightarrow A$.

We compose with $(\tau\ot 1)(1\ot \tau)$ on the right 
and use the fact that  $\tau$ 
is a twisting map
and thus ``commutes'' with the multiplication in $H$,
i.e.~(see \cite{TTP-AWEZ} for example), 
\begin{equation*}\label{twistingcondition}
(1\ot m_H)(\tau\ot 1)(1\ot \tau) = \tau(m_H\ot 1) 
\, ,
\end{equation*}
to rewrite this condition in terms of maps on $\bar H\ot \bar H\ot V$
instead of $V\ot \bar H\ot \bar H$:
$$
\begin{aligned}
0&=(1\ot \lambda_{_R})(1\ot \tau) 
-\lambda_{_R}\tau(m\ot 1) 
+(\lambda_{_R}\ot 1) (\tau\ot 1)(1\ot \tau)
\\ &=(1\ot \lambda_{_R}\tau ) 
-\lambda_{_R}\tau(m\ot 1) 
+(\lambda_{_R}\tau\ot 1) (1\ot \tau)
\\ &=-(1\ot \lambda_{_L}) 
+\lambda_{_L}(m\ot 1) 
-(\lambda_{_L}\ot 1) (1\ot \tau)
\quad\text{ on }  \bar H\ot \bar H\ot V
\, .
\end{aligned}
$$
Thus $d^*(\alphax + \lambdax) |_{_{X_{1,2}}} = 0$ 
if and only if 
$$0\equiv (1\ot \lambda_{_L}) 
-\lambda_{_L}(m_H\ot 1) +(\lambda_{_L}\ot 1)(1 \ot \tau)  
\, 
\qquad\text{ on } \bar H\ot \bar H\ot V 
\, 
$$
upon projection to $A$ using the multiplication $m:A\ot 
A\rightarrow A$, i.e., Condition~(1) holds. 

{\bf Condition (a), differential on $X_{2,1}$.}
On $\R\ot V\subset X_{2,1}$, as $\alphax|_{X_{1,1}}\equiv 0$ and $\lambdax|_{X_{2,0}}\equiv
0$,
\cref{donX21} gives
  $$
\begin{aligned}
  d^*(\alphax +\lambdax) 
  & =
   (1\ot \lambdax) 
   +(\lambdax \ot 1) (1\ot \pr_{\bar H}\ot 1)
   (1\ot \itau)
   \\ & \ \ 
 +(1\ot \alphax) (\itau\ot 1)(1\ot \itau) 
 -(\alphax\ot 1)
 \, .
 \end{aligned}
 $$
 Thus (see \cref{CochainsToParameters}),
 $d^*(\alphax +\lambdax) \equiv 0$  
on $X_{2,1}$ exactly when (using \cref{TauOnScalars})
$$
 (1\ot \lambda_{_R}) 
 +(\lambda_{_R} \ot 1)(1\ot \itau) 
 =-(1\ot\alpha) (\itau\ot 1)(1\ot \itau) 
 +(\alpha\ot 1)
 \quad\text{ on $\R\ot \bar H$}
 $$
upon projection to $A$ by repeated applications
 of the multiplication map $m:A\ot A\rightarrow A$.
We again compose with $(1\ot \tau)(\tau\ot 1)$
on the right and use \cref{LambdaSwitchSides}
to express this in terms of a map 
 on $H\ot \R$ instead:
$d^*(\alphax +\lambdax) \equiv 0$  
on $X_{2,1}$ exactly when
 $$
 (1\ot \lambda_{_L})(\tau\ot 1) 
+ (\lambda_{_L} \ot 1)
 =(1\ot\alpha) 
 -(\alpha\ot 1) (1\ot \tau)(\tau\ot 1)
 \quad\text{ on } \bar H\ot \R $$
 upon projection to $A$ by repeated applications of the
 multiplication map
 $m:A\ot A\rightarrow A$, i.e.,
exactly when
Condition~(3) holds.

{\bf Condition (a), differential on $X_{3,0}$.}
Finally, 
on $X_{3,0}$, $d^*(\lambdax)$ vanishes since $\lambdax\equiv 0$
on $X_{2,0}$.
Consider $d^*(\alphax)$ on  $(\R\ot V)\cap (V\ot \R)\subset X_{3,0}$:
Since $m(r)=0$ for all $r$ in $\R$,
\begin{equation}\label{image}
  \begin{aligned}
 d^*(\alphax) &(1_S\ot x\ot 1_S\ot 1_H\ot 1_H)\\
 &=  \alphax (x\ot 1_S\ot 1_H\ot 1_H - 1_S\ot x\ot 1_H\ot 1_H)\\
 &=  m(1\ot \alpha-\alpha\ot 1)(x)
 \,  .
\end{aligned}
\end{equation}
So $d^*(\alphax + \lambdax)|_{X_{3,0}}= 0$ if and only if
$ 1\ot \alpha - \alpha\ot 1$ has image $0$ in $A$
as a map on $(\R\ot V)\cap (V\ot \R)$ upon projection to $A$, that is,
Condition~(6) holds.

{\bf Brackets in terms of parameter functions.}
Conditions~(b) and (c) in \cref{hom-condns}
involve bracket conditions on $X$
which we express now in terms of the parameter
functions $\lambda$, $\alpha$, and $\beta$.
   Recall that 
   the bracket on $X$ is the lift
   as  
   in~\cref{eqn:Xbracket} of the bracket
   on $\BB_A$
   via the chain maps $\pi:\BB_A\rightarrow 
   X$ and $\iota:X\rightarrow \BB_A$ 
of \cref{HopfKoszulConversion}.
For cocycles $\gammax, \etax$ on $X$,
\begin{equation}\label{BracketDefinition}
  \begin{aligned}
    [\gammax, \etax]
    &=\iota^*[\pi^*\gammax,  \pi^*\etax]_{_{\BB_{\! A} }}\,
    \\ &=
   \iota^*\Big(
   (\pi^*\gammax)\, (\pi^*\etax\ot 1 - 1\ot \pi^*\etax) 
   +
   (\pi^*\etax)\, (\pi^*\gammax\ot 1 - 1\ot \pi^*\gammax)\Big)
   \, .
   \end{aligned}
 \end{equation}

 Also observe that for Condition~(b) in \cref{hom-condns},
 as
 $\lambdax$ and $\alphax$ each have homological 
degree 2, 
$[\alphax,\lambdax]=[\lambdax,\alphax]$, and so 
\[
  [\alphax + \lambdax, \alphax +\lambdax]
  =[\alphax,\alphax]+2[\alphax,\lambdax]+[\lambdax,\lambdax]
  \, .
\]

 {\bf Brackets on $X_{2,1}$.}
On $\R\ot \bar H$, we argue that 
$$
\begin{aligned}
[\lambdax , \lambdax]
&=  2 \lambda_{_L} (\lambda_{_L}\ot 1) (\itau\ot 1)(1\ot\itau)
= 2 \lambda_{_R}\, \tau \, (\lambda_{_R}\ot 1)(1\ot\itau),
\\
  [\alphax,\lambdax ] 
  &=  (\lambda_{_L} \ot 1) (1\ot \alpha)(\itau\ot 1)(1\ot \itau )
=  -(\lambda_{_R} \tau \ot 1) (1\ot \alpha)(\itau\ot 1)(1\ot \itau ),
\\
[\alphax,\alphax]
&= 0\,
   \end{aligned}
   $$
   viewing $\R\ot \bar H\subset X_{2,1}$ for the bracket expressions.
 Observe that for any $2$-cocycles $\kappa$, $\eta$ on $X$,
 \cref{ExplicitValues} implies (see \cref{ThreeTerms})
 that,  on  $\R\ot \bar H\subset X_{2,1}$, 
    $$
   \begin{aligned}
    \iota^* \big(
    (\pi^*\gammax)& \,  (\pi^*\etax\ot 1 - 1\ot \pi^*\etax)\big)
     \\ =& \ 
    (\pi^*\gammax)\, (\pi^*\etax \ot 1 - 1\ot \pi^*\etax) 
    \, \pr_{\BB_A}\, 
    \big((1\ot 1\ot 1) - (1\ot \itau)+(\itau\ot 1)(1\ot \itau)\big)\, .
       \end{aligned}
   $$
Suppressing notation for the projection
  $\pr_{\BB_A}:\B_A\rightarrow \BB_A$, we note that 
  $\pi$ is zero on $V\ot \bar H\subset (\BB_A)_2$ by
  \cref{PiInLowDegree},
  so both
  $(1\ot \pi^*\eta) (1\ot 1\ot 1)$
  and $(\pi^*\eta\ot 1)(1\ot \itau)$ 
are zero on $\R\ot \bar H$
$\subset X_{2,1}$ leaving just 
 \begin{equation}\label{FourTerms}
   \begin{aligned}
    \iota^* \big(
    (\pi^*\gammax)\, & (\pi^*\etax\ot 1 - 1\ot \pi^*\etax)\big)
    \\ &=\ 
    (\gammax\pi)  \big(\etax\pi\ot 1)
    + (\gammax\pi) (1\ot \etax\pi) 
    (1\ot \itau)
    + (\gammax\pi) (\etax\pi \ot 1)
    (\itau\ot 1)(1\ot\itau)
    \\ & \ \ \ \ \ 
   -(\gammax\pi) (1\ot \etax\pi) (\itau\ot 1)(1\ot\itau)\, .
   \end{aligned}
  \end{equation}

{\bf Brackets on $X_{2,1}$: Square bracket of $\alphax$.}
  For $\gammax=\etax=\alphax$, \cref{FourTerms} gives zero because
  the image of  $\pi$ on $\bar H\ot V$ (as a subspace of $(\BB_A)_2$)
  lies in $X_{1,1}$ whereas $\alphax|_{X_{1,1}}\equiv 0$ 
  and likewise the image of $\pi$ on $\R$ (as a subspace of $(\BB_A)_2$)
  lies in $X_{0,2}$ whereas $\alphax|_{X_{0,2}}\equiv 0$.
  Thus $[\alphax,\alphax]=0$ here.

  {\bf Brackets on $X_{2,1}$: Square bracket of $\lambdax$.}
 For $\gammax=\etax=\lambdax$, we observe that 
the image of $\pi$ on $\R$ lies in $X_{2,0}$ whereas $\lambdax|_{X_{2,0}}\equiv 0$,
so the first and fourth terms of  
\cref{FourTerms} vanish.
The second is also $0$ since the image of
$(1\ot \lambdax\pi)(1\ot\itau)$ on $\R\ot \bar
H$
lies in $V\ot \bar H$ where $\pi$ vanishes by \cref{PiInLowDegree},
leaving only the third term 
(see~\cref{PiLambdax})
  $$
  \begin{aligned}
    (\pi^*\lambdax) (\pi^*\lambdax\ot 1)(\itau\ot 1)(1\ot\itau)
     &=
    \lambda_{_L} (\lambda_{_L}\ot 1) (\itau\ot 1)(1\ot \itau)
    \, .
   \end{aligned}
   $$
   Hence (see \cref{LambdaSwitchSides})
   on $\R\ot V\subset (\BB_A)_3$,
   $$
  [\lambdax,\lambdax] 
   = 2 \lambda_{_L} (\lambda_{_L}\ot 1) (\itau\ot 1)(1\ot\itau)
= 2 \lambda_{_R} \tau (\lambda_{_R}\ot 1)(1\ot\itau)
   \, .
   $$

{\bf Brackets on $X_{2,1}$: Mixed bracket of $\alphax$ and $\lambdax$.}
  For $\gammax=\alphax$ and $\etax = \lambdax$, all four terms of
   \cref{FourTerms} vanish for similar reasons.
  For $\gammax=\lambdax$ and $\etax=\alphax$,
observe that the middle two terms of \cref{FourTerms} likewise vanish.  The
   first also vanishes since 
   the image of $\alphax$ lies in $m(V\ot H)\subset A$,
   \cref{PiInLowDegree}  implies that $\pi$ takes $m(V\ot H)\ot H$ to
   $X_{0,2}$,
      and $\lambdax\equiv  0$
   on $X_{0,2}$, leaving only the last term
   of \cref{FourTerms}:
   $$
   \begin{aligned}
     -(\pi^*\lambdax)(1\ot \pi^*\alphax)(\itau\ot 1)(1\ot \itau) 
     =
     -\lambdax\, \pi\, (1\ot m)(1\ot \alpha)(\itau\ot 1)(1\ot \itau) 
\, .
      \end{aligned}
      $$
      Here, we used the fact that
      $(\itau\ot 1)(1\ot \itau)$ takes $\R\ot H$ to $\bar H\ot \R$
      (after projecting $H \rightarrow \bar H$)
   whereas for $r$ in $\R\subset (\B_A)_2$ (see \cref{PiInLowDegree}),
under our identifications,
\begin{equation}\label{PiOnR}
  \pi^*(\alphax) (r) 
=\alphax\pi (r) 
=\alphax(r) 
=m\, \alpha(r) 
\, .
\end{equation}
   But as a map $\bar H\ot V\ot \bar H\subset (\BB_A)_3\rightarrow X$,
   \cref{PiInLowDegree} gives
   $$
   \pi (1\ot m)= \rho_{_R}(\pi\ot 1) + \pi'
     $$
for $\rho_{_R}: X_{1,1}\ot A\rightarrow X_{1,1}$ giving the right $A$-module 
structure of $A$ on $X$ and some map $\pi':(\BB_A)_3\rightarrow X_{0,2}$.
Then as
$\lambdax|_{X_{0,2}}\equiv 0$, we may write,
as a map on $\R\ot \bar H\subset X_{2,1}$,
     $$
   \begin{aligned}
         \lambdax \,\pi\, (1\ot m)(1\ot \alpha)(\itau\ot 1)(1\ot
     \itau)
         & =
     \lambdax \rho_{_R} (\pi\ot 1) (1\ot \alpha)(\itau\ot
     1)(1\ot \itau)
     \\  & =
     m (\lambdax\ot 1) (\pi \ot 1) (1\ot \alpha)(\itau\ot 1)(1\ot
     \itau)
     \\  & =
     m (\pi^* \lambdax \ot 1) (1\ot \alpha)(\itau\ot 1)(1\ot
     \itau)
     \end{aligned}
       $$       
       as $\lambdax$ is an $A$-bimodule map
       (so $\lambdax \rho_{_R} =\rho_{_R}(\lambdax\ot 1)=m(\lambdax\ot
       1)$).
       But $\pi^*\lambdax=\lambda_{_L}$ by~\cref{PiLambdax},
  so
    we may express
     the bracket
on $\R\ot H\subset X_{2,1}$ as
$$
   [\alphax, \lambdax]=
   (\lambda_{_L} \ot 1) (1\ot \alpha)(\itau\ot 1)(1\ot \itau )
\, 
   $$
   upon projection to $A$ via
$m:A\ot A\rightarrow A$, giving 
the brackets on $X_{1,2}$ as claimed.

{\bf Brackets on $X_{3,0}$.}
We argue that
on $(V\ot \R)\cap (\R\ot V)\subset X_{3,0}$,
\begin{equation}\label{BracketsX30}
\begin{aligned}
  [ \alphax , \alphax ]
  &=
2 (\alpha\ot 1)(1\ot \tau)(\alpha\ot 1) 
-2(\alpha\ot 1)(1\ot \alpha),
\\ 
[\alphax, \betax]
  &=
  (\beta\ot 1) 
\big(
(1\ot \tau)(\alpha\ot 1) -(1\ot \alpha) \big),
\\
[\alphax, \lambdax]
&=
-(1 \ot\lambda_{_L}) (\alpha\ot 1),
\\
[\lambdax, \betax]
&= \lambda_{_L}(\beta \ot 1 ),
\quad\text{ and  }
  \\ [\lambdax, \lambdax]
  & = 0
\end{aligned}
\end{equation}
upon projection to $A$ via $m:A\ot A\rightarrow A$,
under our identifications. 

We assume Condition~(6) holds (which we showed above is equivalent to
Condition (a)), i.e., we assume that the image of 
$\alphax\ot 1 - 1\ot \alphax$
on $(V\ot \R)\cap (\R\ot V)\subset X_{3,0}$
projects to zero in $A$ under $m:A\ot A\rightarrow A$.
This implies that on $(V\ot \R)\cap (\R\ot V)$,
\begin{equation}\label{DefinedOn}
\Ima\big((1\ot\tau) (\alpha\ot 1) - (1\ot \alpha)\big)
\subset \R\ot H,
\end{equation}
and hence the maps in Conditions (4) and (5) are defined
on $(V\ot \R)\cap (\R\ot V)$.

To find the indicated brackets, we use \cref{BracketDefinition},
noting that the initial map $\iota^*$ in that expression
acts essentially as the identity,
since for $x$ in
$(V\ot \R)\cap (\R\ot V) \subset X_{3,0}$, 
$\iota(x)$ is just $x$ in $(\BB_A)_3$ under our identifications
 (see the proof of \cref{ExplicitValues}).

First observe that 
$[\lambdax,\lambdax]$ is $0$ on $X_{3,0}$
since $\lambdax$ is zero on $X_{2,0}$ and $X_{0,2}$.

To find brackets with $\alphax$, recall
that $\pi^*(\alphax)=\alpha$ 
as maps on $\R$
under our identifications (see \cref{PiOnR}).
Hence on $(V\ot \R)\cap (\R\ot V) \subset X_{3,0}$, 
 $$
\pi \big(\pi^*(\alphax)\ot 1-1\ot \pi^*(\alphax)\big)\, \iota
\ =\
\pi (\alphax\ot 1-1\ot \alphax)
\, . 
$$
Note that we take $\pi=\pi_{_{S,H}}\, \AWt$
for  chain maps $\AWt: \BB_A\rightarrow \BB_S\ott\BB_H$
and $\pi_{_{S,H}}: \BB_S\ott\BB_H\rightarrow X$
given in \cref {AppendixValuesLowDegree} 
(see the proof of \cref{PiInLowDegree}).

We suppress extra factors of $1_S$ and $1_H$ in $\BB_S\ott \BB_H$,
i.e., we identify (see \cref{LibertyOnMiddleResolution})
$\bar{S}^{\ot n}\ot \bar{H}^{\ot m}$ with
$k1_S\ot \bar{S}^{\ot n}\ot k1_S\ot k1_H \ot\bar{H}^{\ot m}\ot k1_H$
in $(\BB_S)_n\ot (\BB_H)_m$
and use the left and right
$A$-module maps (see \cref{TwistedResolutionAction})
$$
\rho_{_L}:A\ot (\BB_S\ot \BB_H)\rightarrow (\BB_S\ot \BB_H)
\quad\text{ and }
\rho_{_R}:(\BB_S\ot \BB_H)\ot A\rightarrow (\BB_S\ot \BB_H)
\, 
$$
to write generic elements, so, for example,
$(u\ot u'\ot 1_S)\ot (1_H\ot h\ot 1_H)$ in $\BB_S\ot \BB_H$
is just $\rho_{_L}(u\ot (u'\ot h))$ under these identifications.
With a slight abuse of notation, we again use the same notation for the
left and right $A$-module maps on $X$ as well.

The image of $\alphax\ot 1 - 1\ot\alphax$ on 
$(\R\ot V)\cap (V\ot \R)\subset X_3$
lies in the space 
$$(m\ot 1)(V\ot H\ot V)-(1\ot m)(V\ot V\ot H),$$
and we project the image (see \cref{circle-prod-defn,eqn:Xbracket}) 
to 
$\bar A\ot \bar A\subset (\BB_A)_2$
and apply $\pi$, 
writing 
on $(\R\ot V)\cap (V\ot \R)\subset X_3$,
$$
\begin{aligned}
\pi( & \alphax \ot 1\, -\,  1\ot \alphax) 
\\ &=
\pi_{_{S,H}}\AWt(\alphax\ot 1 - 1\ot \alphax)
\\ &=
\pi_{_{S,H}}\big(\rho_{_R}(1\ot \tau)(\alpha\ot 1) 
-\rho_{_L}(1\ot 1\ot \pr_{\bar H})(1\ot \tau)(\alpha\ot 1) -\rho_{_R}(1\ot \alpha) \big)
\\ &=
\pi_{_{S,H}}\, 
\rho_{_R}\big((1\ot \tau)(\alpha\ot 1) -(1\ot \alpha) \big) 
-\pi_{_{S,H}}\rho_{_L}(1\ot 1\ot \pr_{\bar H}) (1\ot \tau)(\alpha\ot 1)
\\ &=
\rho_{_R}(\pi_{_{S,H}}\ot 1)
\big((1\ot \tau)(\alpha\ot 1) -(1\ot \alpha) \big) 
-\rho_{_L}
(1\ot \pi_{_{S,H}})
(1\ot 1\ot \pr_{\bar H})
(1\ot \tau)(\alpha\ot 1) 
\end{aligned}
$$
since $\pi_{_{S,H}}$ is an $A$-bimodule map.
We argue that $\pi_{_{S,H}}$ here acts as the identity in each
expression under our identifications.
Indeed,
for the first term, \cref{DefinedOn} implies that
$\pi_{_{S,H}}\ot 1$ takes each element in
$((\BB_S)_2\ot (\BB_H)_0)\ot
H$
in the image of
$(1\ot \tau)(\alpha\ot 1) -(1\ot \alpha)$
to itself
as an element of $X_{2,0}\ot H$
(see the proof of \cref{PiInLowDegree}).
For the last term, observe that $(1\ot 1\ot \pr_{\bar H}) (1\ot \tau)(\alpha\ot 1)$
has image in $V\ot V\ot \bar H$
and $\pi_{_{S,H}}$ sends
$ v\ot h$ in $(\BB_S)_1\ot (\BB_H)_1$
to $v\ot h$ in $X_{1,1}$ 
(see the proof of \cref{PiInLowDegree}),
so as a map $\R\ot V\rightarrow V\ot \bar H$,
$$\rho_{_L}(1\ot \pi_{_{S,H}} ) (1\ot 1\ot \pr_{\bar H}) (1\ot \tau)(\alpha\ot 1)
= \rho_{_L} (1\ot 1\ot \pr_{\bar H}) (1\ot \tau)(\alpha\ot 1)
\, .
$$
Thus on $(\R\ot V)\cap (V\ot \R)\subset X_3$,
interpreting the right hand side as an expression in $X$,
we may write
$$
\begin{aligned}
\pi(\alphax\ot 1  - 1\ot \alphax) 
&=
\rho_{_R} 
\big(
(1\ot \tau)(\alpha\ot 1) -(1\ot \alpha) \big) 
-\rho_{_L} (1\ot 1\ot \pr_{\bar H})
(1\ot \tau)(\alpha\ot 1) 
\, 
\end{aligned}
$$
with the last term lying in $X_{1,1}$.
Thus on $X_{3,0}$
for a 2-cocycle $\gammax$ in $\{\alphax,\betax,\lambdax\}$,
 $$
 \begin{aligned}
   [\alphax, \gammax]
  &=
   \iota^*\Big(\pi^*\alphax (\pi^*\gammax\ot 1 - 1\ot \pi^*\gammax) 
   +
   \pi^*\gammax (\pi^*\alphax\ot 1 - 1\ot \pi^*\alphax) \Big) 
 \\    &=
\pi^*\alphax (\pi^*\gammax\ot 1 - 1\ot \pi^*\gammax) 
\\    &\ \ \
+
   \gammax \, 
\rho_{_R} 
\big(
(1\ot \tau)(\alpha\ot 1) -(1\ot \alpha) \big) 
-\gammax\, \rho_{_L}  (1\ot 1\ot \pr_{\bar H})
(1\ot \tau)(\alpha\ot 1) 
\\    &=
\pi^*\alphax (\pi^*\gammax\ot 1 - 1\ot \pi^*\gammax) 
\\ & \ \
+m\, (\gammax\ot 1) 
\big((1\ot \tau)(\alpha\ot 1) -(1\ot \alpha) \big) 
\\ & \ \
+m\, (1 \ot\gammax) (1\ot 1\ot \pr_{\bar H}) (1\ot \tau)(\alpha\ot 1) 
 \, . 
\end{aligned}
$$
Of the three terms in this expression, 
the first vanishes for
$\gammax=\betax$ and $\gammax=\lambdax$
since $\Ima \betax, \Ima \lambdax \subset H$ and
$\alphax|_{X_{1,1} } \equiv 0 $,
the second vanishes for $\gammax=\lambdax$
since $\lambdax|_{X_{2,0}}\equiv 0$,
the third vanishes for $\gammax=\alphax$ and $\gammax=\betax$
since $\alphax|_{X_{1,1} }= \betax|_{X_{1,1} } \equiv 0 $,
and the first and second coincide for $\gammax=\alphax$.
Thus
on $(V\ot \R)\cap (\R\ot V)\subset X_{3,0}$,
we can express the brackets with $\alphax$ as
$$
\begin{aligned}
  [ \alphax , \alphax ]
  &=
2 (\alpha\ot 1)(1\ot \tau)(\alpha\ot 1) 
-2(\alpha\ot 1)(1\ot \alpha),
\\ 
[\alphax, \betax]
  &=
  (\beta\ot 1) 
\big(
(1\ot \tau)(\alpha\ot 1) -(1\ot \alpha) \big),
\quad\text{ and  }
\\
[\alphax, \lambdax]
&=
(1 \ot\lambdax) (1\ot 1\ot \pr_{\bar H}) (1\ot \tau)(\alpha\ot 1)
=
(1 \ot\lambda_{_R}) (1\ot \tau)(\alpha\ot 1)
\\ &=
-(1 \ot\lambda_{_L}) (\alpha\ot 1)
\end{aligned}
  $$
  upon repeated projection to $A$ under $m:A\ot A\rightarrow A$.
  Similar analysis 
  gives on $X_{3,0}$
$$
 \begin{aligned}
   [\lambdax, \betax]
  &=
   \iota^*\Big(\pi^*\lambdax (\pi^*\betax\ot 1 - 1\ot \pi^*\betax) 
   +
   \pi^*\betax (\pi^*\lambdax\ot 1 - 1\ot \pi^*\lambdax) \Big) 
\\ &=
(\pi^*\lambdax)(\pi^*\betax \ot 1) -
(\pi^*\lambdax) (1\ot \pi^*\betax) 
   +
   (\pi^*\betax) 
   (\pi^*\lambdax\ot 1) -    (\pi^*\betax)( 1\ot \pi^*\lambdax) 
   \\ &=
\lambda_{_L}(\beta \ot 1 ) 
      \, 
 \end{aligned}
 $$
 since the last 3 terms vanish 
 as $\pi^*\betax (r) = \betax(r) = \beta(r)$ lies in $H$
 for all $r$ in $\R$,
 $\pi(V\ot \bar H)= 0$, $\Ima(\lambda)\subset H$, and
 $\beta|_{X_{1,1}}\equiv 0$
 under our identifications.

{\bf Condition (b).}
Both sides of the equation of Condition~(b) in \cref{hom-condns}
are $0$ on $X_{0,3}$ and $X_{1,2}$, since their graded degree is $-2$. 
We found the values of $[\lambdax,\lambdax]$, $[\alphax,\lambdax]$, 
and $[\alphax,\alphax]$ on $X_{2,1}$ above.  Expressed
in terms of a map on $\R\ot \bar H$,
$$
\begin{aligned}
  [\alphax + \lambdax, & \alphax +\lambdax]
  =[\alphax,\alphax]+2[\alphax, \lambdax] + [\lambdax, \lambdax]
\\&= 2 \lambda_{_L} (\lambda_{_L}\ot 1) (\itau\ot 1)(1\ot\itau) 
+  (\lambda_{_L} \ot 1) (1\ot \alpha)(\itau\ot 1)(1\ot \itau ) 
\, 
\end{aligned}
$$
upon projection to $A$ via the multiplication map $m:A\ot A\rightarrow 
A$. 
We compare with $d^*\betax$ on $X_{2,1}=\R\ot \bar H$.
By \cref{donX21}, on $\R\ot \bar H\subset X_{2,1}$,
the cochain  $d^*(\betax)$ is
 $$
\begin{aligned}
  (1\ot \betax)
  +(\betax\ot 1) (1\ot \pr_{\bar H}\ot 1)
  (1\ot \itau)
 +(1\ot \betax)(\itau\ot 1)(1\ot \itau) - (\betax\ot 1)
\end{aligned}
$$
upon projection to $A$.
But $\betax|_{X_{1,1}}\equiv 0$, so
the first two terms vanish and
 $$ 
\begin{aligned}
   d^*(\betax) &=
   (1\ot \beta)(\itau\ot 1)(1\ot \itau) - (\beta\ot 1)
   \quad\text{ on } \R\ot \bar H\, 
\end{aligned}
$$
upon projection to $A$ under $m:A\ot A\rightarrow A$.
As char $k \neq 2$, we deduce that
$$[\alphax+\lambdax,\alphax+\lambdax] = 2d^*(\betax)
\text{ on $X_{2,1}$ }
$$
if and only if 
$$
\begin{aligned}
  \lambda_{_L} (\lambda_{_L}\ot 1) (\itau\ot 1)(1\ot\itau) 
+  & (\lambda_{_L} \ot 1) (1\ot \alpha)(\itau\ot 1)(1\ot \itau )
\\
&=
(1\ot \beta)(\itau\ot 1)(1\ot \itau) - (\beta\ot 1)
\quad\text{ on $\R\ot \bar H$}
\, 
\end{aligned}
$$
upon projection to $A$ under $m:A\ot A\rightarrow A$.

We obtain Condition (2) by composing both sides
of the last equality
with $(1 \ot \tau)(\tau\ot 1)$ on the right:
$$
\begin{aligned}
\lambda_{_L} (\lambda_{_L}\ot 1) 
+  (\lambda_{_L} \ot 1) (1\ot \alpha)
&=
(1\ot \beta) - (\beta\ot 1) (1 \ot \tau)(\tau\ot 1)
\quad\text{ on $H\ot \R$.}
\end{aligned}
$$

For Condition (b) on $X_{3,0}$, we again use the bracket
values found above:
$$
\begin{aligned}
  [ \alphax + \lambdax , \alphax+\lambdax ]
  &=
  [ \alphax , \alphax ]
  + 2 [ \alphax , \lambdax ] + [ \lambdax , \lambdax ]
  =
  [ \alphax , \alphax ]
  + 2 [ \alphax , \lambdax ]
\end{aligned}
$$
on $X_{3,0}$ can be expressed as the map
  $$
\begin{aligned}
2 (\alpha\ot 1)(1\ot \tau)(\alpha\ot 1) 
-2(\alpha\ot 1)(1\ot \alpha)
-2(1\ot \lambda_{_L})(\alpha\ot 1)
\quad\text{ on }
(V\ot \R)\cap (\R\ot V) 
\, .
\end{aligned}
$$
We compare with $d^*\betax$.
On $(V\ot \R)\cap (\R\ot V) \subset X_{3,0}$,
the differential is
$d= \rho_{_L}-\rho_{_R}$
since $m(\R)\equiv 0$, 
so
$d^*\betax=\betax\rho_{_L}-\betax\rho_{_R} 
=m(1\ot\betax)-m(\betax \ot 1 )$,
i.e.,
$d^*\betax$
may be expressed as the map
$1\ot\beta-\beta \ot 1 
$ on $(V\ot \R)\cap (\R\ot V)$
upon projection to $A$.
Thus on $X_{3,0}$,  
  $$[ \alphax + \lambdax , \alphax+\lambdax ]= 2d^*\betax$$
if and only if as a map on $(V\ot \R)\cap (\R\ot V)$,
$$
 (\alpha\ot 1)(1\ot \tau)(\alpha\ot 1) 
-(\alpha\ot 1)(1\ot \alpha) 
-(1\ot \lambda_{_L})(\alpha\ot 1) 
=(1\ot\beta)-(\beta \ot 1 ) 
$$
(with the maps well-defined 
on the domain, see \cref{DefinedOn})
upon projection to $A$ 
giving Condition (4).

{\bf Condition (c).}
The left side of the equation in Condition (c) of \cref{hom-condns} 
is $0$ on $X_{0,3}$, $X_{2,1}$, and $X_{1,2}$ for degree reasons.
It remains to check  $X_{3,0}$.
From the above bracket values, we find that
$[\alphax+\lambdax,\betax]= 0$ on $X_{3,0}$
if and only
if $$(\beta\ot 1)\big((1\ot\tau)(\alpha\ot 1) - (1\ot \alpha)\big) = 
- \lambda_{_L} (\beta\ot 1) \, 
$$
on $(V\ot \R)\cap (\R\ot V)$
(with the map well-defined 
on the domain, see \cref{DefinedOn})
upon projection to $A$.
This is Condition~(5).
\end{proof}


\vspace{3ex}

We saw in the proof of \cref{thm:main-Hopf-left}
that 
  the first condition may be written in terms of the vanishing of a map on
  $V\ot H\ot H$ instead of $H\ot H\ot V$ if we identify $\lambda$
  with a function on $V\ot H$ instead of $H\ot V$.
  (See \cref{SwitchSides}.) 
  Likewise, we can rewrite the remaining
  conditions in this way, for example 
  by composing on the right with $(\itau \ot 1)(1\ot \itau)$ in
  Condition (3).  
Again, in order to simplify notation in the next corollary,
we apply the multiplication map $T_k(A)\rightarrow A$,
$a_1\ot\cdots\ot a_n\mapsto a_1\cdots  a_n$
to each summand in each condition after identifying $V$ and $H$ with
subspaces of $A$.

\vspace{1ex}

\begin{cor}    
\label{thm:main-Hopf-right}
A filtered algebra $\cH$ is a strong PBW deformation of $S\# \Ho$ if
and only if $\cH=\chabl$ for some linear parameter functions
$\alpha,\beta,\lambda$ satisfying
\begin{itemize}
  \item[$(1)$] $(1\ot \lambda_{_R})(\itau\ot 1) - \lambda_{_R} (1\ot m) +
    (\lambda_{_R}\ot 1) = 0$,
   \item[$(2)$]
    $
  \lambda_{_R}\tau (\lambda_{_R}\tau\ot 1) 
-   (\lambda_{_R}\tau \ot 1) (1\ot \alpha)
=
(1\ot \beta)- (\beta\ot 1) (1\ot \tau)(\tau\ot 1)
\, , 
$
  \item[$(3)$] $(\alpha\ot 1) - (1\ot\alpha)(\itau\ot 1)(1\ot \itau) 
  = (1\ot\lambda_{_R})   + (\lambda_{_R}\ot 1)(1\ot \itau)$,
        \item[$(4)$] $(\alpha\ot 1)\big((1\ot\tau)(\alpha\ot 1) - (1\ot \alpha)\big) 
       + (1\ot \lambda_{_R}\tau)(\alpha\ot 1) 
     = 1\ot\beta - \beta\ot 1$,
\item[($5$)] $(\beta\ot 1)\big((1\ot\tau)(\alpha\ot 1) - 1\ot\alpha\big)
    = \lambda_{_R}\tau (\beta\ot 1)$,
    \item[(6)] $\alpha\ot 1 - 1\ot\alpha =0$,
\end{itemize}
after applying the multiplication map $T_k(A)\rightarrow A$
to each summand in each condition, for $\lambda:\R'\rightarrow H$
identified 
with the map $V\ot H\rightarrow H, 
\ v \ot h \mapsto 1_H\ot v\ot h- \itau(v\ot h)\ot 1_H$.

Here, the maps in (1) are defined on $V\ot H\ot H$,
the maps in (2) on $H\ot \R$,
the maps in (3) on $\R\ot H$,
the map in (6) on $(V\ot \R)\cap (\R\ot V)
\subset V\ot V\ot V$,
and (6) implies that the maps in (4) and (5) are defined
on $(V\ot \R)\cap (\R\ot V)$.

  \end{cor}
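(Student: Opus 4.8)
The plan is to deduce this corollary directly from \cref{thm:main-Hopf-left} by re-expressing each of its six conditions in terms of the one-sided parameter function $\lambda_{_R}$ in place of $\lambda_{_L}$. The only difference between the two statements is the identification of the parameter $\lambda:\R'\rightarrow H$: here we use $\lambda_{_R}: V\ot H\rightarrow H$ rather than $\lambda_{_L}: H\ot V\rightarrow H$ (see \cref{lambda'}). The two are related by \cref{LambdaSwitchSides}, namely $\lambda_{_L}=-\lambda_{_R}\circ\tau$ and $\lambda_{_R}=-\lambda_{_L}\circ\itau$, so passing between the two forms is purely a matter of substitution together with a change of domain implemented by composing on the right with the appropriate powers of $\tau$ and $\itau$.

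First I would treat Conditions~(1) and (3), which change domain. In fact these right-hand forms appear essentially verbatim as intermediate steps in the proof of \cref{thm:main-Hopf-left}: the vanishing of $d^*(\alphax+\lambdax)$ on $X_{1,2}$ was first shown to be equivalent to $(1\ot\lambda_{_R})(\itau\ot 1)-\lambda_{_R}(1\ot m)+(\lambda_{_R}\ot 1)=0$ on $V\ot\bar H\ot\bar H$ (see \cref{SwitchSides}), which is exactly Condition~(1), before composing with $(\tau\ot 1)(1\ot\tau)$ to obtain the $\lambda_{_L}$ form. Similarly, the vanishing on $X_{2,1}$ was first expressed on $\R\ot\bar H$ in terms of $\lambda_{_R}$, and a trivial rearrangement of that identity yields Condition~(3). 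So for these two I would simply stop at the $\lambda_{_R}$ stage rather than convert further.

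Next I would handle Conditions~(2), (4), and (5), which are supported on the same domains as in \cref{thm:main-Hopf-left} and require only the substitution $\lambda_{_L}=-\lambda_{_R}\tau$. For Condition~(2), using $\lambda_{_L}(\lambda_{_L}\ot 1)=\lambda_{_R}\tau(\lambda_{_R}\tau\ot 1)$ and $(\lambda_{_L}\ot 1)(1\ot\alpha)=-(\lambda_{_R}\tau\ot 1)(1\ot\alpha)$ turns the $X_{2,1}$ identity of \cref{thm:main-Hopf-left} into the stated form. For Conditions~(4) and (5), the bracket values on $X_{3,0}$ recorded in \cref{BracketsX30} already involve only $\alpha$, $\beta$, and $\lambda_{_L}$; substituting $-\lambda_{_L}=\lambda_{_R}\tau$ converts the terms $-(1\ot\lambda_{_L})(\alpha\ot 1)$ and $-\lambda_{_L}(\beta\ot 1)$ into $(1\ot\lambda_{_R}\tau)(\alpha\ot 1)$ and $\lambda_{_R}\tau(\beta\ot 1)$ respectively. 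Condition~(6) does not involve $\lambda$ and is identical in both statements.

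I expect the only real care to be bookkeeping: tracking which tensor slot each $\tau$ or $\itau$ acts in and verifying that each composition genuinely effects the intended permutation of factors (for instance, that composing the $X_{2,1}$ identity with $(1\ot\tau)(\tau\ot 1)$ correctly moves the domain from $\R\ot\bar H$ to $\bar H\ot\R$, and dually for $X_{1,2}$). Since \cref{thm:main-Hopf-left} has already established the equivalence of its six conditions with Conditions~(a)--(c) of \cref{hom-condns}, and the substitutions above are reversible, the six conditions here are likewise equivalent to (a)--(c), and \cref{hom-condns} then gives the result.
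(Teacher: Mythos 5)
Your proposal is correct and matches the paper's own argument: the paper likewise derives \cref{thm:main-Hopf-right} from \cref{thm:main-Hopf-left} by noting that the $\lambda_{_R}$ forms of Conditions~(1) and~(3) already appear as intermediate steps in that proof (see \cref{SwitchSides} and the $X_{2,1}$ computation), and that the remaining conditions follow by the substitution $\lambda_{_L}=-\lambda_{_R}\tau$ together with composing on the right with the appropriate twisting maps. Your bookkeeping of signs and domains agrees with the paper's.
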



  Since every strong PBW deformation is a PBW deformation, we have
\begin{cor}\label{6ConditionsImplyWeakPBW}
  Let $H$ be a Hopf algebra and
suppose the six  conditions of \cref{thm:main-Hopf-left} or of 
\cref{thm:main-Hopf-right}
hold for some linear parameter functions 
$\alpha,\beta,\lambda$. 
Then $\chabl$ is a PBW deformation of $S\# \Ho$. 
\end{cor}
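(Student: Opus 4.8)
The plan is to read off the conclusion directly from the explicit characterization already established, since the corollary is a deliberate weakening of \cref{thm:main-Hopf-left}. First I would observe that the hypothesis---that the six conditions hold for some parameter functions $\alpha,\beta,\lambda$---is exactly the ``if'' direction appearing in \cref{thm:main-Hopf-left} (equivalently, in \cref{thm:main-Hopf-right}). Applying that theorem, one concludes immediately that $\chabl$ is a \emph{strong} PBW deformation of $S\#\Ho$.

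Then I would invoke \cref{defn:strongPBW}, which records that every strong PBW deformation is in particular a PBW deformation: if the canonical epimorphism $\BBB_{\hh}\to\gr\BBB$ of \cref{Epimorphism} is an isomorphism, then in particular $\gr(\chabl)\cong(\chabl)_{\hh}\cong S\#\Ho$ as graded algebras, which is precisely the PBW condition. This completes the argument.

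There is no genuine obstacle, as all the substantive work resides upstream in \cref{ForwardsDirection,BackwardsDirection} and their repackaging into the algebraic form of \cref{thm:main-Hopf-left,thm:main-Hopf-right}. The only point requiring a moment's care is that the left and right formulations employ different identifications of $\lambda$ (with $H\ot V$ versus $V\ot H$); but since \cref{thm:main-Hopf-left,thm:main-Hopf-right} are established as equivalent characterizations of the same strong PBW property, satisfying either set of six conditions yields the same conclusion, and hence the PBW property follows in either case.
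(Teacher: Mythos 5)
Your proposal is correct and follows exactly the paper's argument: the six conditions yield a \emph{strong} PBW deformation via \cref{thm:main-Hopf-left} (or \cref{thm:main-Hopf-right}), and a strong PBW deformation is a PBW deformation by \cref{defn:strongPBW}, since the isomorphism $\gr(\chabl)\cong S\#\Ho$ is what the PBW condition requires. No gaps; your remark about the equivalence of the left and right formulations of $\lambda$ is also consistent with how the paper treats them.
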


\cref{thm:main-Hopf-left} and \cref{prop:doubly-PBW} imply the next corollary.
\begin{cor}\label{6ConditionsDoublyNoetherianCase}
  Assume $H$ is a doubly Noetherian Hopf algebra
  acting on a graded Koszul algebra $S$.
A filtered algebra $\cH$ is a PBW deformation of $S\# \Ho$ if
and only if $\cH= \chabl$ for some
linear parameter functions 
$\alpha,\beta,\lambda$ satisfying
the conditions of
\cref{thm:main-Hopf-left}
(or the equivalent conditions of \cref{thm:main-Hopf-right}).
\end{cor}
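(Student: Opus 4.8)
The plan is to obtain this corollary by composing two biconditionals already established in the paper, as the preceding sentence signals. First I would apply \cref{prop:doubly-PBW} to collapse the distinction between a PBW deformation and a strong PBW deformation in the present setting. To invoke that proposition legitimately I must verify its two hypotheses for the homogeneous version $\BBB_{\hh}=S\#\Ho\cong T_{\! _H}(\Ho\ot V\ot \Ho)/(\R\cup\R')$: that $\Ho$ is doubly Noetherian, which is assumed, and that each graded component of $S\#\Ho$ is a finitely generated $\Ho$-bimodule.

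For the finite-generation hypothesis I would point directly to the proof of \cref{RewritingSmashProduct}, where the degree $n$ component of $T_{\! _H}(\Ho\ot V\ot \Ho)/(\R\cup\R')$ is shown to be a finitely generated free right $\Ho$-module with basis indexed by a $k$-basis of the degree $n$ piece of $S$; in particular it is a finitely generated $\Ho$-bimodule. (This is the same observation used for the third bullet of \cref{PBW-iff-deformation}.) With both hypotheses in hand, \cref{prop:doubly-PBW} yields that a filtered algebra $\cH$ is a PBW deformation of $S\#\Ho$ if and only if it is a strong PBW deformation of $S\#\Ho$.

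Finally I would invoke \cref{thm:main-Hopf-left}, which characterizes the strong PBW deformations of $S\#\Ho$ as precisely those filtered algebras of the form $\chabl$ whose parameter functions $\alpha,\beta,\lambda$ satisfy the six listed conditions. Chaining this equivalence with the one from the previous paragraph gives the stated biconditional. The parenthetical claim, that one may instead use the conditions of \cref{thm:main-Hopf-right}, requires nothing further, since those two lists were already shown to be equivalent reformulations in the discussion between the two theorems. I expect no genuine obstacle here, as the corollary is a formal composition of two proven equivalences; the only point meriting attention is the finite-generation hypothesis of \cref{prop:doubly-PBW}, and that is supplied verbatim by the module-basis computation in \cref{RewritingSmashProduct}.
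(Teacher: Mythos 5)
Your proposal is correct and follows exactly the paper's route: the corollary is obtained by combining \cref{prop:doubly-PBW} (whose finite-generation hypothesis is verified, as you note, by the free right $\Ho$-module structure of each graded component established in the proof of \cref{RewritingSmashProduct}, the same observation used for the third bullet of \cref{PBW-iff-deformation}) with the characterization of strong PBW deformations in \cref{thm:main-Hopf-left}. Nothing further is needed.
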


There are many examples 
of such PBW deformations $\chabl$
already in the literature,
and our results combine most of them under one roof. 
See for example~\cite{FlakeSahi,SW-Koszul,WW}, 
and the references provided therein.

\vspace{2ex}

\subsection*{Polynomial rings}
Now we restrict to the case where $S=S(V)$, the symmetric algebra on
$V$,
i.e., a polynomial ring.
Here $\mathcal{R}=\{v\ot w-w\ot v:v,w\in V\}$.
Identify $\alpha(v,v')$ with $\alpha(v\ot v'-v'\ot v)$ and similarly
for $\beta,\lambda$.
For convenience we write, symbolically,
\[
\begin{aligned}
    \alpha(v,v') &= 
\sum \alpha(v,v')_1 \ot \alpha(v,v')_2
&&\quad\text{ in $V\ot \Ho$}
\qquad\quad\text{and}\\
 (1\ot \Delta)(\alpha(v,v')) &= 
\sum \alpha(v,v')_1 \ot \alpha(v,v')_2 \ot \alpha(v,v')_3
&&\quad\text{ in $V\ot \Ho\ot \Ho$}
\end{aligned}
\]
in analogy with Sweedler's notation.

The following corollary should be compared with~\cite[Theorem 2.5]{Khare},
where $\Ho$ was instead taken to be a cocommutative algebra over
a commutative ring, and $\alpha$ had image in $V$.
Denote by $\Alt_3$ the alternating group on 3 symbols.
Compare  also with \cite[Theorem 2.5]{SW-Koszul}.

\begin{cor}
Suppose that $S=S(V)$.
Then a filtered algebra $\cH$ is a strong PBW deformation of $S\# \Ho$ if and only if
$\cH =\chabl$ for some linear parameter functions
$\alpha,\beta,\lambda$ satisfying 
\begin{itemize}\setlength{\itemindent}{-2ex}
\item[(1)]
   $\lambda(h h',v) = h\lambda(h',v) + \sum 
  \lambda(h , {}^{h_1'}v ) h_2'$,
\item[(2)]
    $h\beta(v,v') -\sum \beta( {}^{h_1}v, {}^{h_2}v') h_3 
    = \lambda(\lambda(h,v),v')-\lambda(\lambda(h,v'),v)
    +\lambda(h , \alpha(v,v'))$,
    \rule{0ex}{3ex}
\item[(3)]
   $ \sum \big( {}^{h_1}\alpha(v,v') h_2 - \alpha( {}^{h_1}v,
   {}^{h_2}v') h_3 \big)$
  \rule{0ex}{3ex}
  
$ = \sum\big( {}^{\lambda(h,v)_1}v'  \, \lambda(h,v)_2
    - {}^{\lambda(h,v')_1}v\, \lambda(h,v')_2\big)$
$ + \sum \big( {}^{h_1}v\, \lambda(h_2,v') - 
    {}^{h_1}v'\, \lambda(h_2,v)\big)$,
    \rule{0ex}{3ex}
  \item[(4)]
        $\sum\sum_{\sigma\in \Alt_3} \alpha \big( \alpha(v_{\sigma(1)},v_{\sigma(2)})_1,  
    {}^{\alpha(v_{\sigma(1)},v_{\sigma(2)})_2}v_{\sigma(3)} \big)
   \ \alpha(v_{\sigma(1)},v_{\sigma(2)})_3$
    \rule{0ex}{3ex}

   $ -\sum\sum_{\sigma\in\Alt_3}
  \alpha\big(v_{\sigma(1)},\alpha(v_{\sigma(2)},v_{\sigma(3)})_1\big) 
\ \alpha(v_{\sigma(2)},v_{\sigma(3)})_2$
\rule{0ex}{3ex}

$- \sum \sum_{\sigma\in\Alt_3} \alpha(v_{\sigma(1)},v_{\sigma(2)})_1\
\lambda\big(\alpha(v_{\sigma(1)},
   v_{\sigma(2)})_2, v_{\sigma(3)}\big)$
   \rule{0ex}{3ex}
   
   \hspace{-3ex}
   $= \sum_{\sigma\in\Alt_3} v_{\sigma(1)}\, \beta(v_{\sigma(2)},v_{\sigma(3)}) 
   - \sum\sum_{\sigma\in\Alt_3} {}^{\beta(v_{\sigma(1)},v_{\sigma(2)})_1}v_{\sigma(3)} 
     \ \beta(v_{\sigma(1)},v_{\sigma(2)})_2$,
     \rule{0ex}{3ex}
   \item[(5)] 
   $\sum_{\sigma\in\Alt_3} \lambda\big(\beta(v_{\sigma(1)},v_{\sigma(2)}) , v_{\sigma(3)}\big)$  
\rule{0ex}{3ex}

$ = \ \ \  \sum_{\sigma\in\Alt_3} \beta\big(
\alpha(v_{\sigma(1)},v_{\sigma(2)})_1, 
   {}^{\alpha(v_{\sigma(1)},v_{\sigma(2)})_2} v_3\big) 
\,  \alpha(v_{\sigma(1)}, v_{\sigma(2)})_3$
  \rule{0ex}{3ex}

$ \ \ \ - \sum_{\sigma\in\Alt_3} \beta\big(v_{\sigma(1)}, \alpha(v_{\sigma(2)},v_{\sigma(3)})_1\big)
    \ \alpha(v_{\sigma(2)},v_{\sigma(3)})_2$,
    \rule{0ex}{3ex}
  \item[(6)]
   $\sum_{\sigma\in\Alt_3} \big(\alpha(v_{\sigma(1)},v_{\sigma(2)})\, v_{\sigma(3)} 
   - v_{\sigma(1)}\, \alpha(v_{\sigma(2)},v_{\sigma(3)}) \big) = 0$,
   \rule{0ex}{3ex}
\end{itemize}
upon projection of images of the maps to $S\# \Ho$
for all $h, h'\in \Ho$, $v,v', v_1, v_2, v_3$ in $V$. 
\end{cor}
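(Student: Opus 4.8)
The plan is to specialize the six abstract conditions of \cref{thm:main-Hopf-left} to the case $S=S(V)$ by substituting the explicit data of the symmetric algebra. First I would record that $\R=\Span_k\{v\ot v'-v'\ot v: v,v'\in V\}$, that the twisting map is $\tau(h\ot v)=\sum {}^{h_1}v\ot h_2$ (with $\itau$ its inverse acting as the swap on any input involving $1_H$), and that the identification $\alpha(v,v'):=\alpha(v\ot v'-v'\ot v)$ makes $\alpha$ and $\beta$ \emph{antisymmetric} bilinear maps on $V$, while $\lambda_{_L}(h,v)=\lambda(h\ot v\ot 1_H-1_H\ot\tau(h\ot v))$. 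Each of the six conditions is then a composite of $\lambda_{_L}$, $\alpha$, $\beta$, $\tau$, and $m$ evaluated on a specific domain, so the proof reduces to expanding these composites on generators using Sweedler notation and matching them termwise against the six displayed equations.

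Conditions (1), (2), and (3) are the most direct. Condition (1) lives on $H\ot H\ot V$; writing out $\lambda_{_L}(m\ot 1)$ and $(\lambda_{_L}\ot 1)(1\ot\tau)$ with $\tau(h'\ot v)=\sum {}^{h'_1}v\ot h'_2$ and projecting to $A$ yields exactly the cocycle-type identity $\lambda(hh',v)=h\lambda(h',v)+\sum\lambda(h,{}^{h'_1}v)h'_2$. For (2) and (3) I would feed the generator $r=v\ot v'-v'\ot v$ of $\R$ into the corresponding composites on $H\ot\R$; expanding $(1\ot\tau)(\tau\ot 1)$ and $(1\ot\alpha)$ against the coproduct and using antisymmetry of $\alpha$ to combine the $v\ot v'$ and $v'\ot v$ contributions reproduces conditions (2) and (3). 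This is bookkeeping with no conceptual difficulty.

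The heart of the proof is conditions (4), (5), and (6), which are posed on $(V\ot\R)\cap(\R\ot V)\subset V^{\ot 3}$. First I would identify this space: by \cref{Koszulterms} it is the Koszul term $\tilde K_3$, and for the symmetric algebra a tensor antisymmetric in positions $(1,2)$ and in $(2,3)$ is totally antisymmetric, so $\tilde K_3=\Wedge^3 V$, spanned by the full antisymmetrizations $\sum_{\sigma\in S_3}\sgn(\sigma)\,v_{\sigma(1)}\ot v_{\sigma(2)}\ot v_{\sigma(3)}$. The key observation is that evaluating any of the maps in (4)--(6) on such an antisymmetric tensor collapses the signed $S_3$-sum into an unsigned sum over $\Alt_3$: each map factors through $\alpha$ or $\beta$ applied to an adjacent pair of arguments, and swapping that pair both flips $\sgn(\sigma)$ and flips the sign of $\alpha$ (or $\beta$), so the two contributions coincide. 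This produces an overall factor of $2$ on each side, which cancels because $2$ is invertible in $k$. Carrying this reduction through each composite---$(\alpha\ot 1)(1\ot\tau)(\alpha\ot 1)$, $(\beta\ot 1)\big((1\ot\tau)(\alpha\ot 1)-(1\ot\alpha)\big)$, and the pure-$\alpha$ map of (6)---while expanding every occurrence of $\tau$ via the coproduct, yields the $\Alt_3$-summed formulas (4), (5), and (6).

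The hard part will be the sign- and coproduct-bookkeeping in condition (4): it interleaves the degree-raising term $(\alpha\ot 1)(1\ot\tau)(\alpha\ot 1)$, whose inner $\tau$ must be expanded against the $H$-component in the image of the first $\alpha$, with the $\lambda_{_L}$ term and the $\beta$-difference on the right-hand side, all while tracking which pair among the three arguments is being antisymmetrized at each stage. Verifying \cref{DefinedOn} (that these composites are genuinely defined on $\tilde K_3$ once (6) holds) is a prerequisite here. Once the $\Alt_3$ reduction is checked consistently across all three conditions, each step is reversible, so the six displayed identities are equivalent to Conditions (a)--(c) of \cref{hom-condns} specialized to $S(V)$, giving the stated characterization; I would close by remarking that the right-hand version follows identically from \cref{thm:main-Hopf-right}.
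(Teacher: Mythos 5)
Your proposal is correct and takes essentially the same route as the paper: the paper's entire proof is a one-line appeal to specializing the six conditions of \cref{thm:main-Hopf-left} to $S=S(V)$, which is exactly the examination you carry out (including the key identification $(V\ot\R)\cap(\R\ot V)=\Wedge^3V$ and the expansion of $\tau$ via the coproduct). One minor point of hygiene: under the paper's identification $\alpha(v,v')=\alpha(v\ot v'-v'\ot v)$, grouping the signed $S_3$-antisymmetrizer into cosets of $\langle(1\,2)\rangle$ rewrites it directly as an unsigned $\Alt_3$-sum of elements of $\R\ot V$, so no factor of $2$ actually arises (and hence none needs to be cancelled) --- your ``factor of 2'' remark implicitly extends $\alpha$ to single tensors $v\ot v'$, on which it is not defined.
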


\begin{proof}
Examination of the expressions in Conditions (1)--(6) of
\cref{thm:main-Hopf-left} reveals these alternative 
ways to write the conditions in this special case.
\end{proof}




\section{Acknowledgments}
We thank Apoorva Khare
for invigorating conversations on many ideas here.
The first author was partially supported by Simons grants 429539 and 949953.
The second author was partially supported by National Science Foundation grant 2001163. 
This material is based upon work supported by the National Science Foundation 
under Grant No.~DMS-1928930 and by the Alfred P.~Sloan Foundation under 
grant G-2021-16778, while the second author was in residence at the 
Simons Laufer Mathematical Sciences Institute (formerly MSRI) in Berkeley, 
California, during the Spring 2024 semester.

\appendix
\section{Chain map to the bar resolution}
\label {AppendixValuesLowDegree}

We take $H$ to be a Hopf algebra acting on a graded Koszul algebra
$S = T(V)/ (\R)$, 
with grading preserved by the action and
notation as in Sections~\ref{sec:spr} and~\ref{sec:ParametersToCochains}.  
We describe the way the $S\# H$-bimodule maps 
  of \cref{HopfKoszulConversion},
  $$
\pi: \BB_{S\#\Ho}   \longrightarrow X\sd 
\quad\text{and}\quad 
\iota: X\sd \longrightarrow  \BB_{S\#\Ho} ,
$$
may be chosen to preserve the algebraic relations defining
the Hopf algebra and the smash product structure
in low degree.

\subsection*{Twisted Alexander-Whitney and Eilenberg-Zilber maps}
To this end,
we express $\pi$ and $\iota$
as compositions with the twisted Alexander-Whitney
and Eilenberg-Zilber chain maps $\AWt$ and $\EZt$
of \cite{TTP-AWEZ}:
   \begin{equation}\label{BigAWEZDiagram}
      \entrymodifiers={+!!<0pt,\fontdimen22\textfont2>}
\xymatrixcolsep{2ex}
\xymatrixrowsep{8ex}
\xymatrix{
& \ar@(dl, ul)[dd]_{\pi} &
(\BB_{S\# \Ho})_{n+1}   
\ar[rr]  \ar@<-1ex>[d]_{\rm{AW}^\tau}
&  & 
 (\BB_{S\# \Ho})_{n}
\ar@<-1ex>[d]_{\rm{AW}^\tau}
&  & 
 \\
& & 
(\BB_{S} \otimes_{\tau} \BB_H )_{n+1} 
 \ar[rr]  \ar[u]<-1ex>_{\rm{EZ}^\tau}
 \ar@<-1ex>[d]_{\pi_{_{S,H}}} 
 & & 
(\BB_{S} \otimes_{\tau} \BB_H )_{n} 
\ar[u]<-1ex>_{\rm{EZ}^\tau}  
\ar@<-1ex>[d]_{\pi_{_{S,H}}}  
&  & 
  \\ 
  & &
X_{n+1} 
 \ar[rr]  \ar[u]<-1ex>_{\iota_{_S} \ot\,  \iota_{_H}}
 & & 
X_{n} 
\ar[u]<-1ex>_{\iota_{_S} \ot\,  \iota_{_H}}
  & \ar@(ur, dr)[uu]_{\iota} & 
\, .}
\end{equation}
%
Here, $\BB_S\ot_\tau \, \BB_\Ho $ is the
twisted product resolution 
of reduced bar complexes
$\BB_S$ and $\BB_H$ with
$n$-th term (see \cite{TTP-AWEZ})
$$
\begin{aligned}
 (\BB_S\ot_{\tau} \, \BB_\Ho )_n 
&=\bigoplus_{\ell=0}^{n}\ 
 S\ot \bar{S}^{\, \ell}\ot S\ot
 \Ho\ot \bar{\Ho}^{\,\ot (n-\ell)}\ot \Ho
 \, .
\end{aligned}
$$
Recall that, in comparison, the reduced bar resolution of $S\# H$
has $n$-th term
$$
\begin{aligned}
(\BB_{S\# H})_n 
&= (S\# H) \ot (\overline{S\# H})^{\ot n} \ot ( S\# H)
\, .
\end{aligned}
$$

\subsection*{Identifications in the bar complex for clarity}
In the next two lemmas and their proofs,
we suppress extra tensor factors of $1_A$ on the outside
and identify
\begin{equation*} 
 \text{ 
   $V\ot \bar H$, \ $\bar H \ot V$, \ $V\ot V$, \ $\R'$, and $\R$}
\end{equation*}
    with 
    vector subspaces of
    $(\BB_A)_2 = A\ot \bar A \ot \bar A \ot A
  \subset  A\ot A \ot A \ot A = (\B_A)_2$
  whenever working in $\BB_A$ 
  for $A= S\# H$ to avoid distracting extra notation,
  see \cref{liberty1,liberty2,liberty3,liberty4}.
  We similarly identify $\R\ot V$, $V\ot \R$, $\R'\ot V$, $V\ot \R'$,
  $\bar H\ot \R$, $\R\ot \bar H$ with subspaces of $A\ot \bar A\ot
  \bar A \ot \bar A \ot A =(\BB_A)_3 \subset (\B_A)_3$.

  \begin{lemma}\label{ExplicitValues}
    For $A=S\# H$, 
    the chain map $\iota:X\rightarrow \BB_{A}$ can be chosen so that
    \begin{itemize}
    \item
     $\iota_2$ maps $X_{1,1}$
      onto the $A$-bimodule span of
      $\R'$,
\item 
     $\iota_2$ maps $X_{2,0}$
     onto the $A$-bimodule span of
     $\R$,
     \item
  $\iota_3$ maps $X_{3,0}$
   onto the $A$-bimodule span of
$  ( \R \ot V)
\cap ( V\ot \R )$, and
     \item
  $\iota_3$ maps $X_{2,1}$
to the $A$-bimodule span in $(\BB_A)_3$ of
  $$
 \big( (V\ot \R') 
\oplus 
(\bar H\ot \R)\big)
\ \cap\ 
\big((\R'\ot V) 
\oplus
(\R\ot \bar H)\big)\, .
$$
\end{itemize}
  \end{lemma}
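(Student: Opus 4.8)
The plan is to read $\iota$ through the factorization coming from the diagram \eqref{BigAWEZDiagram}, namely $\iota=\EZt\circ(\iota_{_S}\ot\iota_{_H})$, where $\iota_{_S}\colon K_S\to\BB_S$ is the standard Koszul embedding of \eqref{eqn:iotaSn} and $\iota_{_H}$ is the identity on $\BB_\Ho$ (both resolutions share the $\BB_\Ho$ factor). Since every map in sight is an $(S\#\Ho)$-bimodule homomorphism, it suffices to evaluate $\iota$ on the free bimodule generators $1_S\ot\tilde K_i\ot 1_S\ot 1_\Ho\ot\bar\Ho^{\ot j}\ot 1_\Ho$ of each $X_{i,j}$ supplied by \cref{LibertyOnX}, and then to identify the resulting elements of $\BB_{S\#\Ho}$ under the conventions \eqref{liberty3}. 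First I would record the low-degree values of $\EZt$ from \cite{TTP-AWEZ}: on the pure Koszul strand $X_{i,0}$ (empty $\bar\Ho$-string) the twisted Eilenberg--Zilber map reduces to the chain-level inclusion induced by the algebra map $S\hooklongrightarrow S\#\Ho$, whereas on a mixed strand it is a \emph{twisted shuffle} in which the single $\bar\Ho$-letter is interleaved among the $V$-letters, each crossing being resolved by the inverse twisting map $\itau$.

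The two pure strands then follow at once. For $X_{2,0}$ a generator $1_S\ot r\ot 1_S\ot 1_\Ho\ot 1_\Ho$ with $r=\sum_i v_i\ot v_i'\in\R$ is carried by $\iota_{_S}$ to $\sum_i 1\ot v_i\ot v_i'\ot 1$ in $(\BB_S)_2$ and then included as $r$ in $\R\subset(\BB_{S\#\Ho})_2$, so $\iota_2(X_{2,0})$ is the bimodule span of $\R$. For $X_{3,0}$ the same argument applies with $(K_S)_3$: by \eqref{Koszulterms} its fibre is $\tilde K_3=(\R\ot V)\cap(V\ot\R)$, and $\iota_{_S}$ followed by $S\hooklongrightarrow S\#\Ho$ embeds it as the bimodule span of $(\R\ot V)\cap(V\ot\R)\subset(\BB_{S\#\Ho})_3$, as claimed.

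The mixed strands require the twisted shuffle. For $X_{1,1}$ I would compute $\EZt$ on $1_S\ot v\ot 1_S\ot 1_\Ho\ot h\ot 1_\Ho$ (with $v\in V$, $h\in\bar\Ho$): the two interleavings of the $S$-letter $v$ and the $\Ho$-letter $h$ differ exactly by moving $h$ past $v$, governed in $S\#\Ho$ by $\itau$, so the output is $v\ot h-\itau(v\ot h)$, which is a generator of $\R'$ in the sense of \eqref{liberty3}; hence $\iota_2(X_{1,1})$ is the bimodule span of $\R'$. For $X_{2,1}$ I would run the analogous degree-$3$ shuffle on $1_S\ot r\ot 1_S\ot 1_\Ho\ot h\ot 1_\Ho$: shuffling $h$ through the two $V$-factors of $r=\sum_i v_i\ot v_i'$ yields the term $r\ot h\in\R\ot\bar\Ho$ together with the terms in which $h$ has crossed one factor and in which it has crossed both, each crossing producing a copy of $\itau$. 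The point is that, after regrouping, the last two tensor factors of the signed sum assemble into either a Koszul relation in $\R$ or a smash relation in $\R'$, exhibiting the image in $(V\ot\R')\oplus(\bar\Ho\ot\R)$, while regrouping the first two tensor factors exhibits the very same element in $(\R'\ot V)\oplus(\R\ot\bar\Ho)$; equality of these two descriptions is precisely membership in the stated intersection.

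I expect the genuine work to be the $X_{2,1}$ computation: one must track signs and the iterated applications of $\itau$ in the twisted shuffle and verify that the three families of terms combine, via $\R$ and $\R'$, into the prescribed intersection rather than landing in only one of its two defining summands. The freedom in ``can be chosen'' is used here and for $X_{1,1}$: the choice of section $\bar\Ho\subset\Ho$ and the normalization of $\EZt$ fixed in \cite{TTP-AWEZ} pin down the projection $\pr_{\bar\Ho}$ entering the reduced-bar presentation of $\R'$, and once that normalization is fixed the four identifications above are forced.
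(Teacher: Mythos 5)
Your proposal is correct and takes essentially the same route as the paper's proof: factor $\iota=\EZt\circ(\iota_{_S}\ot 1_{\BB_H})$, evaluate on the free $A$-bimodule generators of each $X_{i,j}$, handle the pure strands $X_{2,0}$, $X_{3,0}$ by the trivial shuffle, get $v\ot h-\itau(v\ot h)$ on $X_{1,1}$, and on $X_{2,1}$ produce the three-term signed shuffle sum and regroup it in the two ways you describe to land in both direct sums, hence in their intersection. The one ingredient you leave implicit --- that the double-crossing term $(\itau\ot 1)(1\ot\itau)(r\ot h)$ lies in $\bar H\ot \R$ because the twisting map and its inverse preserve the Koszul relations, \cref{hopfpreserveskoszulrelations} --- is exactly the fact the paper invokes to close that step.
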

  \begin{proof}
     We appeal to the explicit construction of the chain map $\iota$ from
    \cite[Theorem 11.8]{TTP-AWEZ}: 
    $$\iota=\EZt(\iota_{_S}\ot 1_{\BB_H})$$ for
    inclusion chain map $\iota_{_S}: K_S\rightarrow \BB_S$
    of \cref{KoszulStandardEmbedding} and 
    twisted
  Eilenberg-Zilber chain map
  \[
     \EZt=\pr_{\BB_A} \, \varrho^{-1}\, \theta
\]
  given as a composition of
  a shuffle map $\theta$, a twisted perfect shuffle map
  $\varrho^{-1}$,
  and projection 
  $\pr_{\BB_A}:\B_A\mapsto \BB_A$, see
  \cite[Section 4]{TTP-AWEZ}.

  {\bf The chain map $\iota$ on $X_{1,1}$.}
  Take
  $$x=(1_S\ot v\ot 1_S)\ot (1_H\ot h\ot 1_H)
  \qquad\text{in $X_{1,1}\subset X_2=(K_S\ott \BB_H)_2$}
  $$
  for $h\in \bar{\Ho}$ and $v \in V$. 
  Then $x$ is sent to itself under $\iota_{_S}\ot \iota_{\BB_H}$
  and we compute the value under 
$\EZt$ (see~\cite[(4.3), (4.7)]{TTP-AWEZ}).
  The shuffle map $\theta_{1,1}$
  is a signed sum over two shuffles in $\mathfrak{S}_{1,1}\subset \mathfrak{S}_2$
  (the identity permutation and the swap permutation)
  taking $x$ 
  to
  $$1_S\ot (v\ot 1_S)\ot 1_S \ot 1_H\ot (1_H\ot h)\ot 1_H
  -
  1_S\ot (1_S\ot v)\ot 1_S \ot 1_H\ot (h\ot 1_H)\ot 1_H
  $$
  which is then sent by $\varrho^{-1}$ (given by iterated twisting by $\tau^{-1}$)
to
\begin{equation}\label{IotaOnX11}
\begin{aligned}
 v\ot h
 & -\sum h_2 \ot \, ^{\igamma(h_1)}v
 \ =\  v\ot h - \tau^{-1}(v\ot h)
 \qquad\text{ in $(\B_A)_2$},
\end{aligned}
\end{equation}
using the identifications \cref{liberty1,liberty2,liberty3}.  We project to $\BB_A$ to see that
$\iota$ maps $X_{1,1}$ to the $A$-bimodule span of $\R'$ (see \cref{liberty4}).

{\bf The chain map $\iota$ on $X_{2,0}$ and on $X_{3,0}$.}
Now consider (symbolically) 
$$y=
(1_S\ot v\ot w \ot 1_S)
\ot (1_H\ot 1_H)
$$
in $X_{2,0}$
for $v,w$ in $V$.
Then one can check that $\iota$ takes $y$ to 
  $v \ot w $
since the only shuffle is the trivial shuffle.
  So for any $r$ in $\R$,
\begin{equation}\label{eqn:r-to-r}
  \iota(1_S\ot r\ot 1_S\ot 1_H\ot 1_H)
  =\ r 
  \end{equation}
  and hence the image of 
  $\iota$ on  $X_{2,0}$ is the $A$-bimodule span
  of $\R$.
A similar argument, considering the degree~3 component of 
the Koszul resolution given by~\cref{Koszulterms}, verifies the statement
for image of $\iota$ on $X_{3,0}$.

{\bf The chain map $\iota$ on $X_{2,1}$.} 
    We apply $\iota$ to input 
  $$(1_S\ot r \ot 1_S)\ot (1_H\ot h \ot 1_H)$$
  for $h$ in $\bar{H}$ 
and $r$ in $\R$. 
We use the standard embedding (see~\cref{eqn:iotaSn})
with $\iota_{_H}$ the identity on $\BB_H$
$$\iota_{_S}\ot \iota_{_H}: X_{2,1}\longrightarrow (\BB_S)_2\ott (\BB_H)_1\, . $$
  We first compute $\EZt$ on 
  $$y=(1_S\ot v\ot w \ot 1_S)\ot (1_H\ot h \ot 1_H)$$
  for $v,w$ in $V$ and $h$ in $\bar H$
 noting that three shuffles in the symmetric group $\mathfrak{S}_3$
 are applied, namely, the identity, $(2\ 3)$, and $(1\ 2\ 3)$.
 Under the shuffle map $\theta_{2,1}$,
 $$
  \begin{aligned}
   y\ \ 
    \mapsto \ \ \ & 
    (1_S\ot v\ot w\ot 1_S \ot 1_S)\ot (1_H\ot 1_H\ot 1_H\ot h \ot 1_H) \\
 - \, &  (1_S\ot v\ot 1_S\ot w \ot 1_S)\ot (1_H\ot 1_H\ot h\ot 1_H\ot 1_H) \\
   + \, &
        (1_S\ot 1_S\ot v\ot w \ot 1_S)\ot (1_H\ot h\ot 1_H\ot 1_H\ot 1_H) 
      \end{aligned}
      $$
which in turn is sent by the twisted perfect shuffle map
$\varrho^{-1}$
to
\begin{equation}\label{ThreeTerms}
\begin{aligned}
 v\ot w\ot h 
  -  (1\ot \tau^{-1})(v\ot w\ot h)
  +   (\tau^{-1}\ot 1)(1\ot \tau^{-1})(v\ot w\ot h) 
  \quad\text{ in $(\B_A)_3$.}
\end{aligned}
\end{equation}
We argue this projects into the claimed intersection in $(\BB_A)_3$.
On one hand, the sum of the first two terms 
projects to the subspace 
$V\ot \R'$ of $(\BB_A)_3$ (see \cref{liberty4}).
On the other hand, the sum of the
last two terms 
projects to the subspace $\R'\ot V$ of $(\BB_A)_3$.
This implies that the image
of $(1_S\ot r \ot 1_S)\ot (1_H\ot h \ot 1_H)$
in $X_{2,1}$
under $\iota$ lies, on one hand, in
$$(V\ot \R')
\oplus
( \bar H\ot \R),
$$
and, on the other hand,
in 
$$(\R\ot \bar H)
\oplus (\R'\ot V).
$$
This is because
$
(\tau^{-1}\ot 1)(1\ot \tau^{-1})(r\ot h)\subset H\ot \R
$
as the subspace of Koszul relations
$\R\subset V\ot V$ is preserved
by the twisting map $\tau$ and its inverse 
(see~\cref{hopfpreserveskoszulrelations}).
\end{proof}

We now record some specific values in low degree 
of a chain map 
$ \pi:\BB_A\longrightarrow X$
from~\cref{HopfKoszulConversion}
that are used in proofs in this paper. 
We again use the identifications of \cref{liberty1,liberty2,liberty3,liberty4} when working in $(\BB_A)_2$ or $(\B_A)_2$
to avoid overly technical notation.
We suppress
tensor symbols in $A$ to avoid confusion with tensor symbols
in $\BB_A$ or $\B_A$,
writing $aa'$ for the product of $a$ and $a'$ in $A$, 
and we further identify any element 
$vh\ot v'h'$ in $\bar A\ot \bar A$
with $1_A\ot vh \ot v' h'\ot 1_A$ in $(\BB_A)_2\subset
(\B_A)_2$ for $h, h'$ in $\bar H$ and $v, v'$ in $V$,
see \cref{ChoiceOfSection}.
\begin{lemma}\label{PiInLowDegree}
  The chain map $ \pi:\BB_A\longrightarrow X$
  from~\cref{HopfKoszulConversion}
  may be chosen so that
  for all $h,h'$ in $\bar{H}$, 
  $v,v'$ in $V$, and $r$ in $\R$, 
\begin{equation*}
  \begin{aligned}
    &\pi(v\ot h)
    &&= 0\, ,\\
    &\pi(h\ot h')
    &&= (1_S\ot 1_S)\ot (1_H\ot h \ot h'\ot 1_H)  &&\quad\text{ in } X_{0,2},\\
    &\pi(h\ot v)
    &&= 
      - \sum (1_S \ot \, ^{h_1} v\ot 1_S)\ot (1_H\ot \bar h_2\ot 1_H)
     &&\quad\text{ in } X_{1,1}\, , \\
     &\pi( vh\ot h')
     &&= 
  \  ( v\ot 1_S)\ot (1_H\ot h\ot h'\ot 1_H)
&&\quad\text{ in } X_{0,2}\, , \\
    &\pi\, ( r)
    &&=
(1_S\ot r \ot 1_S)\ot (1_H\ot 1_H)
&&\quad\text{ in } X_{2,0}\, , \ \ { and } \\
&\pi(h\ot v'h')
&&= 
- \sum (1_S \ot\,  ^{h_1}v' \ot 1_S)\ot (1_H\ot \bar h_2\ot h') \\
 & && \quad +  \sum ( \, ^{h_1}v'\ot 1_S)\ot (1_H\ot \bar h_2\ot h'\ot 1_H)
  &&\quad\quad\text{ in } X_{1,1}\oplus X_{0,2}\, ,
\end{aligned}
\end{equation*}
for $\bar h_2$ the image of $h_2$ under the projection $H\rightarrow
\bar H$.
\end{lemma}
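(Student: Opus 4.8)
The plan is to read $\pi$ off its construction in \cref{HopfKoszulConversion}. Recall from the diagram \cref{BigAWEZDiagram} that $\pi=\pi_{_{S,H}}\circ\AWt$, where $\AWt\colon\BB_A\to\BB_S\ott\BB_\Ho$ is the twisted Alexander--Whitney map of \cite{TTP-AWEZ} and $\pi_{_{S,H}}\colon\BB_S\ott\BB_\Ho\to K_S\ott\BB_\Ho=X$ is induced by a chain-level projection $\BB_S\to K_S$, tensored with $1_{\BB_\Ho}$, that is right inverse to the standard embedding $\iota_{_S}$ of \cref{KoszulStandardEmbedding}. First I would pin down this projection to be the standard one, sending $\bar S$ onto its degree-$1$ part $V$ in homological degree $1$ and onto $\tilde K_2=\R$ in homological degree $2$, so that $\pi_{_{S,H}}$ carries the relevant $S$-tensor factors to honest Koszul-resolution representatives. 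Since every map in sight is an $A$-bimodule homomorphism, it suffices to evaluate on the bimodule generators, which are precisely the degree-$2$ elements listed in the statement; this also settles the force of the phrase ``may be chosen,'' as the only freedom is the choice of projection, which I have fixed.

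The computation then reduces to applying the explicit low-degree formula for $\AWt$ to each input and pushing the result through $\pi_{_{S,H}}$. The two structural features I would isolate at the outset are: (i) on a tensor already in $S\cdot H$ normal order, such as $v\ot h$, the twisted AW map produces only degenerate chains in the reduced complex and hence contributes nothing, giving $\pi(v\ot h)=0$ at once; and (ii) on a tensor in the reversed order, $h\ot v$ with $h$ before $v$, the twisted AW map reorders via the twisting map, producing $\sum{}^{h_1}v\ot h_2=\tau(h\ot v)$ with a sign, which after $\pi_{_{S,H}}$ and projection of the $H$-component to $\bar H$ yields the stated $-\sum(1_S\ot{}^{h_1}v\ot 1_S)\ot(1_H\ot\bar h_2\ot 1_H)$ in $X_{1,1}$. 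The remaining four values follow from the same mechanism: $h\ot h'$ lands purely in the $\BB_\Ho$-factor $X_{0,2}$; for $vh\ot h'$ the leading $S$-part $v$ separates from the two $H$-parts to give the $X_{0,2}$ term $(v\ot 1_S)\ot(1_H\ot h\ot h'\ot 1_H)$; the purely quadratic input $r\in\R\subset V\ot V$ is fixed by $\AWt$ and carried into $(K_S)_2=S\ot\R\ot S$, giving the $X_{2,0}$ value; and $h\ot v'h'$ is the one genuinely mixed case, where reordering $h$ past $v'$ while retaining $h'$ on the right splits the output across $X_{1,1}\oplus X_{0,2}$.

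The hard part will be the bookkeeping in this last mixed case and, throughout, the careful handling of the $\bar H$-projections: because $\Delta$ need not carry $\bar H$ into $\bar H\ot\bar H$, I must insert $\pr_{\bar H}$ on the appropriate Sweedler factors, exactly as in the comodule structure map $\rho$ used to build the twisted product resolution, and verify that the $\AWt$ formula respects the internal splitting $H=\ker\varepsilon\oplus k1_\Ho$. As a final consistency check I would confirm these values against \cref{ExplicitValues} by testing $\pi\iota=1_X$ on the $X_{1,1}$ and $X_{2,0}$ generators: for the former, $\iota$ sends the generator to $v\ot h-\tau^{-1}(v\ot h)=v\ot h-\sum h_2\ot{}^{\gamma^{-1}(h_1)}v$, and applying the computed $\pi$ together with the antipode identity $\sum h_2\gamma^{-1}(h_1)\ot h_3=1_\Ho\ot h$ returns the generator, giving an independent verification of the signs and twists.
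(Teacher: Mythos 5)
Your computational core is sound and matches the paper's: you apply the explicit low-degree formula for $\AWt$, observe that any output with an interior $1_S$ or $1_H$ dies in the reduced complexes (which gives $\pi(v\ot h)=0$ at once), and note that reversed-order inputs like $h\ot v$ survive only through the twisted reordering term, with the sign and the $\pr_{\bar H}$ exactly as stated; your final consistency check via $\sum h_2\gamma^{-1}(h_1)\ot h_3=1_H\ot h$ is also correct. The gap is in your construction of $\pi_{_{S,H}}$. You posit that it is $p\ot 1_{\BB_H}$ for a fixed ``standard'' chain retraction $p:\BB_S\to K_S$ (incidentally, you want $p\,\iota_{_S}=1$, a \emph{left} inverse of $\iota_{_S}$, not a right inverse). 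But for $p\ot 1_{\BB_H}$ to be an $(S\# H)$-bimodule map --- which the lemma requires, and which you yourself invoke when reducing to bimodule generators --- the map $p$ must commute with the Hopf action: by \cref{TwistedResolutionAction}, the left action of $h\in H$ on $\BB_S\ott\BB_H$ is $h\cdot(x\ot y)=\sum {}^{h_1}x\ot h_2y$, so it acts on the $\BB_S$-factor. In homological degree $2$, $H$-equivariance of your projection onto $\widetilde K_2=\R$ forces an $H$-stable complement of $\R$ inside $V\ot V$, and for a general Hopf algebra $H$ (the paper assumes only a bijective antipode, not semisimplicity) no such complement need exist. So the ``standard projection'' you fix at the outset cannot be fixed in this generality.

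The paper's proof is designed precisely to avoid this: it never writes $\pi_{_{S,H}}$ as a tensor product of projections. Instead it obtains $\pi_{_{S,H}}$ abstractly as \emph{some} $A$-bimodule chain map left inverse to $\iota_{_S}\ot\iota_{_H}$, whose existence follows from \cite[Lemma 9.1]{TTP-AWEZ} once one checks that $(\BB_S\ott\BB_H)/\Ima(\iota_{_S}\ot\iota_{_H})$ is projective (in fact free) as an $A^e$-module in each bidegree. For that, only a \emph{vector-space} complement of $(\widetilde{K}_S)_i\ot\bar H^{\ot j}$ inside $\bar S^{\ot i}\ot\bar H^{\ot j}$ is needed, because by \cref{LibertyOnMiddleResolution} both modules are free $A^e$-modules on exactly these spaces; equivariance never enters. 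This weaker input suffices for all six formulas: every surviving term of $\AWt$ on the listed inputs is an $A$-multiple of an element of $\Ima(\iota_{_S}\ot\iota_{_H})$ --- for instance $(1_S\ot{}^{h_1}v\ot 1_S)\ot(1_H\ot\bar h_2\ot 1_H)$, $(1_S\ot r\ot 1_S)\ot(1_H\ot 1_H)$, and $v\cdot\big((1_S\ot 1_S)\ot(1_H\ot h\ot h'\ot 1_H)\big)$ --- so the identity $\pi_{_{S,H}}\,(\iota_{_S}\ot\iota_{_H})=1$ together with $A$-bilinearity already determines all the claimed values, independently of how $\pi_{_{S,H}}$ acts off the image. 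Replace your fixed projection with this existence argument and the rest of your computation goes through verbatim.
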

\begin{proof}
The chain map
$\pi:\BB_A\rightarrow X$
of \cref{HopfKoszulConversion} (see \cite{TTP-AWEZ})
can be used to show existence of an $A$-bimodule map
 $$\pi_{_{S,H}}
: \BB_S\ot_{\tau}\BB_H \rightarrow K_S\ott \BB_H 
\, 
$$
that is left inverse to $\iota_{_{S,H}}= \iota_{_S}\ot \iota_{_H}$
(that is, $\pi_{_{S,H}}\iota_{_{S,H}} = 1$)
by \cite[Corollary 11.10]{TTP-AWEZ},
which one can then use to redefine
$\pi$ as the composition
(see \cref{BigAWEZDiagram})
$$
\pi= \pi_{_{S,H}}\, \AWt 
$$
for the twisted Alexander-Whitney map
$\AW^{\tau}: \BB_A\longrightarrow 
\BB_S\ot_{\tau}\BB_H$, see \cite[Section 4]{TTP-AWEZ}.

We prefer a more direct construction of $\pi$ here
(rather than redefining it)
and observe that existence of $\pi_{_{S,H}}$ follows
alternatively from
\cite[Lemma 9.1]{TTP-AWEZ}
after we verify the hypothesis, namely, that the $A$-bimodule
$(\BB_S\ot_\tau \BB_H)/\Ima(\iota_{_S}\ot \iota_{_H})$ is projective as 
an $A^e$-module in each degree.
First we recall that both $X=K_S\ot_{\tau} \BB_H$ and $\BB_S\ot_{\tau} \BB_H$
are in fact free $A^e$-module resolutions, and in particular there
are $A^e$-module isomorphisms
\begin{equation}\label{LibertyOnMiddleResolution}
  \begin{aligned}
X_{i,j}  & \ \cong\ && A \ot (\widetilde{K}_S)_i\ot \bar H ^{\ot j} \ot A ,\\
(\BB_S\ot_{\tau}\BB_H)_{i,j} & \ \cong \ &&A \ot \ \, \bar S^{\ot i}\ \ot \bar H^{\ot j}
     \ot A 
   \end{aligned}
   \end{equation}
compatible with the embedding $\iota_{_S}\ot \iota_{_H}$ of $X$ into $\BB_S\ot_{\tau}\BB_H$,
where $(\widetilde{K}_S)_i$ is as in \cref{Koszulterms}
with $\tilde K_0=k$ and $\tilde K_1=V$, 
see \cref{LibertyOnX}.
Thus a free basis of $X_{i,j}$ and of $(\BB_S\ott
\BB_H)_{i,j}$
can be chosen
as a vector space basis of $(\widetilde{K}_S)_i\ot  \bar H ^{\ot j}$ and of 
$\bar S^{\ot i}\ot \bar H^{\ot j}$, respectively.
Now for each $i,j$, choose a vector space complement of 
$(\widetilde{K}_S)_i \ot \bar H^{\ot j}$ in 
$\bar S^{\ot i}\ot \bar H^{\ot j}$.
This is then
 a free basis of a free module complement of $X_{i,j}$ in
$(\BB_S\ot_{\tau}\BB_H)_{i,j}$. 
Therefore $((\BB_S\ot_{\tau} \BB_H)/\Ima(\iota_{_S}\ot \iota_{_H}))_{i,j}$
is a free $A^e$-module for each $i,j$.

To verify the claimed image of $\pi$,
we again use the identifications of \cref{liberty1,liberty2,liberty3}.
Since $\pi_{_{S,H}}$ is left inverse to $\iota_{_S}\ot \iota_{_H}$, 
it plays the role of the identity map 
on elements in the image of $\iota_{_S}\ot \iota_{_H}$. In particular, 
for $h, h'$ in $\bar H$, $v$ in $V$, $r$ in $\R\subset V\ot V$, 
$$
\begin{aligned}
  &\pi_{_{S,H}}:(\BB_S)_2\ot (\BB_H)_0
\rightarrow (K_S)_2\ot (\BB_H)_0
&&\ \text{ maps } (1_S\ot r\ot 1_S)\ot (1_H\ot 1_H)
\text{ to itself,}
\\
&\pi_{_{S,H}}:(\BB_S)_1\ot (\BB_H)_1 
\rightarrow (K_S)_1\ot (\BB_H)_1 
&&\ \text{ maps }
(1_S\ot v\ot 1_S)\ot (1_H\ot h \ot 1_H )
\text{ to itself,}
\\
&\pi_{_{S,H}}:(\BB_S)_0\ot (\BB_H)_2
\rightarrow (K_S)_0\ot (\BB_H)_2
&&\ \text{ maps }
(1_S\ot 1_S)\ot (1_H\ot h\ot h' \ot 1_H )
\text{ to itself}
.
\end{aligned}
$$

We compute $\pi=\pi_{_{S,H}}\circ \AWt$ on generic input of low degree.
Under the chain map $\AWt: \BB_A\rightarrow \BB_S\ott \BB_H$ (see~\cite[Section 4]{TTP-AWEZ}),
for any $h,h'$ in $\bar H$ and $u,u'$ in $V$,
\begin{equation}\label{AWt}
  \begin{aligned}
  uh\ot u'h'\ 
  &\longmapsto&
&\sum (1_S\ot u \ot \, ^{h_1} u'\ot 1_S)\ot (1_H\ot h_2\ot h'\ot
  1_H)\\
  &\longmapsto&
  &\sum (1_S\ot u \ot \, ^{h_1} u'\ot 1_S)\ot (1_H\ot h_2h')\\
  && - &\sum (u \ot \, ^{h_1} u'\ot 1_S)\ot (1_H\ot \bar h_2\ot h')\\
 && +  &\sum (u \ ( ^{h_1} u')\ot 1_S)\ot (1_H\ot \bar h_2\ot h'\ot 1_H)\, 
\end{aligned}
\end{equation}
Fix $h,h'$ in $\bar H$ and $v,v'$ in $V$.
As we are working with the reduced bar resolution of $S$
and elements of $H$ act on $1_S$ by a scalar,
\cref{AWt} implies that 
\begin{equation*}
  \begin{aligned}
 \AWt(h\ot h') 
 = (1_S\ot 1_S) \ot (1_H\ot h\ot h'\ot 1_H)
 \end{aligned}
\end{equation*}
upon which
$\pi_{_{S,H}}$ acts essentially as the identity 
giving the claimed value for $\pi(h\ot h')$.
Likewise,
\begin{equation*}
  \begin{aligned}
  \AWt( v\ot h) \ \  
 & = \ \ \ 
  \sum (1_S\ot v \ot 1_S \ot 1_S)\ot (1_H\ot h)\\
  & \quad  - \sum (v\ot 1_S \ot 1_S)\ot (1_H\ot 1_H\ot h)\\
  & \quad +  \sum (v \ot 1_S)\ot (1_H\ot 1_H \ot h\ot 1_H) \ \  = \ \   0
\end{aligned}
\end{equation*}
and thus 
$    \pi(v\ot h) = 
\pi_{S,H} \AW^{\tau} ( v\ot h) = 
0$, 
whereas
\begin{equation*}
  \begin{aligned}
  \AWt(h\ot v) 
 & = \ \ \ 
 \sum (1_S\ot 1_S \ot \, ^{h_1} v\ot 1_S)\ot (1_H\ot h_2) \\
& \quad   - \sum (1_S \ot \, ^{h_1} v\ot 1_S)\ot (1_H\ot \bar h_2\ot 1_H)\\
&\quad   +  \sum (1_S \ ^{h_1} v\ot 1_S)\ot (1_H\ot \bar h_2\ot 1_H\ot 1_H) \\
 & = 
  - \sum (1_S \ot \, ^{h_1} v\ot 1_S)\ot (1_H\ot \bar h_2\ot 1_H) \, ,
\end{aligned}
\end{equation*}
    giving the claimed value for $\pi(h\ot v)$.
    
    Similarly, by \cref{AWt},
   $\AWt(vh\ot h' ) 
 = 
 ( v\ot 1_S)\ot (1_H\ot h\ot h'\ot 1_H)$,
 giving the claimed value for $\pi(vh\ot v')$
 as $\pi_{_{S,H}}$ is an $A$-module map.
 %
%
The determination of $\pi(h\ot v'h')$
is similar.
 %
    %
%
Lastly, again by \cref{AWt},
$$
\begin{aligned}
  \AWt  (v\ot v') = 
   (1_S\ot v\ot v'\ot 1_S)\ot (1_H\ot 1_H) \, ,
\end{aligned}
$$
and thus,
for any $r$ in $\R\subset V\ot V$,
we have
$    \pi\, (r)
    =
(1_S\ot r \ot 1_S)\ot (1_H\ot 1_H)$.
  \end{proof}

  \section{Choice of codomain for parameters}
  \label{AppendixParameterCodomain}

  We justify here our claim
  in \cref{sec:ParametersToCochains}
     that we may replace any parameter function
     $\alpha: \R\rightarrow V\ot H\ot V$ defining
     a quotient algebra 
     $\chabl$ 
     as in \cref{eqn:Hlab}
     by a function $\R\rightarrow V\ot H\cong k1_H\ot V \ot H$
    or a function
    $\R\rightarrow H\ot V\cong H\ot V \ot k1_H$ without changing the algebra,
    see \cref{ReplaceParameter}.
    Again, $H$ is a Hopf algebra
     acting on a Koszul algebra $S$.

     \begin{prop}\label{ChoiceParameterCodomain}
      For any quotient algebra $\cH_{\lambda,\tilde\alpha, \tilde \beta}$
      defined by linear parameter functions
        $$\tilde\alpha: \R\rightarrow H\ot V\ot H, \qquad 
        \tilde\beta: \R\rightarrow H, \qquad\text{ and }\quad
        \lambda:\R'\rightarrow H, 
     $$
 there exist linear parameter functions 
     $\alpha: \R\rightarrow k1_H\ot V\ot H$ and 
     $\beta: \R\rightarrow H$
with
$ \cH_{\lambda, \tilde\alpha, \tilde \beta} = \chabl$.
\end{prop}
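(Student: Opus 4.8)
The plan is to observe that both $\cH_{\lambda,\tilde\alpha,\tilde\beta}$ and $\chabl$ are quotients of the \emph{same} tensor algebra $T_H(H\ot V\ot H)$ and share the \emph{same} deformed smash relations $\P'=\{r-\lambda(r):r\in\R'\}$, since we keep $\lambda$ fixed. Writing $\widetilde{\P}=\{r-\tilde\alpha(r)-\tilde\beta(r):r\in\R\}$ and $\P=\{r-\alpha(r)-\beta(r):r\in\R\}$, it therefore suffices to construct $\alpha:\R\to k1_H\ot V\ot H$ and $\beta:\R\to H$ so that the two relation ideals coincide, i.e. $(\widetilde{\P}\cup\P')=(\P\cup\P')$. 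First I would reduce this to a pointwise congruence: if for every $r\in\R$ one has $r-\tilde\alpha(r)-\tilde\beta(r)\equiv r-\alpha(r)-\beta(r)\pmod{(\P')}$, then each generator of $\widetilde{\P}$ lies in $\P+(\P')\subseteq(\P\cup\P')$ and, reading the same congruence backward, each generator of $\P$ lies in $\widetilde{\P}+(\P')\subseteq(\widetilde{\P}\cup\P')$, whence the ideals are equal and the two algebras coincide.

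To build $\alpha$ and $\beta$, I would collapse the degree-$1$ term $\tilde\alpha(r)\in H\ot V\ot H$ into the subspace $k1_H\ot V\ot H$ using the deformed smash relations. Define a $k$-linear map $\Phi\colon H\ot V\ot H\to k1_H\ot V\ot H$ by $h\ot v\ot h'\mapsto 1_H\ot\tau(h\ot v)h'$; this is well defined because $\tau$ is bilinear and right multiplication is linear, and its image does lie in $k1_H\ot V\ot H$ since $\tau(h\ot v)=\sum{}^{h_1}v\ot h_2\in V\ot H$ as the action preserves the grading. Set $\alpha=\Phi\circ\tilde\alpha$, which has the required codomain $k1_H\ot V\ot H$. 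Next define $\Psi\colon H\ot V\ot H\to H$ by $h\ot v\ot h'\mapsto\lambda\big(h\ot v\ot 1_H-1_H\ot\tau(h\ot v)\big)h'$, again $k$-linear (using that $h\ot v\ot 1_H-1_H\ot\tau(h\ot v)\in\R'$ for all $h\in H$, the $h=1_H$ contribution vanishing by \cref{TauOnScalars}), and set $\beta=\tilde\beta+\Psi\circ\tilde\alpha$.

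The verification of the congruence then runs over simple tensors. For $h\ot v\ot h'$ put $r'=h\ot v\ot 1_H-1_H\ot\tau(h\ot v)\in\R'$; right multiplication by $h'$ in $T_H(H\ot V\ot H)$ gives $r'\cdot h'=h\ot v\ot h'-1_H\ot\tau(h\ot v)h'$. Since $r'-\lambda(r')\in\P'$, we obtain $r'\cdot h'\equiv\lambda(r')\,h'\pmod{(\P')}$, that is $h\ot v\ot h'\equiv 1_H\ot\tau(h\ot v)h'+\lambda(r')\,h'$. Summing over the terms of $\tilde\alpha(r)$ yields $\tilde\alpha(r)\equiv\alpha(r)+\Psi(\tilde\alpha(r))\pmod{(\P')}$, and hence $r-\tilde\alpha(r)-\tilde\beta(r)\equiv r-\alpha(r)-\beta(r)$, exactly the congruence required. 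Combined with the first paragraph, this gives $\cH_{\lambda,\tilde\alpha,\tilde\beta}=\chabl$; the analogous replacement of $\tilde\alpha$ by a function into $H\ot V$ is symmetric, using $\itau$ in place of $\tau$.

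The step I expect to require the most care is the bookkeeping of degrees in the central congruence: the discrepancy $\tilde\alpha(r)-\alpha(r)$ must be shown to collapse, modulo the deformed smash relations, into the degree-$0$ subspace $H$, where the parameter $\lambda$ unavoidably contributes the correction $\Psi(\tilde\alpha(r))$. The delicate point is to absorb precisely this degree-$0$ term into the redefined $\beta$ while leaving $\lambda$, and therefore the shared ideal $(\P')$, completely untouched; once the congruence is pinned down at the level of single generators, the passage to equality of two-sided ideals is formal.
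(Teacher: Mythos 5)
Your proposal is correct and follows essentially the same route as the paper's proof: your $\Phi$ is the paper's map $\gamma$, your $\Psi$ is the paper's $\lambda\circ\gamma'$ (with $\lambda$ extended as a right $H$-module map), and your definitions $\alpha=\Phi\circ\tilde\alpha$, $\beta=\tilde\beta+\Psi\circ\tilde\alpha$ coincide with the paper's $\alpha=\gamma\,\tilde\alpha$, $\beta=\tilde\beta+\lambda\,\gamma'\,\tilde\alpha$. The only cosmetic difference is that you verify the key identity as a congruence modulo the ideal $(\P')$, whereas the paper exhibits the discrepancy $x'-\lambda(x')$ explicitly as an element of the $k$-span of $\P'\cdot H$; the two formulations are interchangeable.
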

\begin{proof}
    Consider the maps
    $$
    \begin{aligned}
     & \gamma:      H\ot V\ot H\longrightarrow 
     k1_H\ot V\ot \Ho \subset H\ot V \ot H,
     \quad\text{and}\quad 
     \\
     &\gamma': H\ot V\ot H\longrightarrow 
     \R'\cdot H \subset H\ot V \ot H
     \,
     \end{aligned}
     $$
     given by $\gamma(h\ot v\ot h') = (1_H\ot \tau(h\ot v))h'$
     and $\gamma'=\text{Id}-\gamma$.
We define parameter functions
     $$\alpha=\gamma\, \tilde\alpha:\ \R\longrightarrow k1_H\ot V\ot H
     \quad
     \text{ and }\quad
     \beta = \tilde \beta + \lambda \, \gamma' \, \tilde\alpha
     : \ \R\longrightarrow H
     $$
after extending $\lambda$ to 
     a right $H$-module homomorphism 
     $\lambda:H\ot V\ot H\rightarrow H$.

      Recall that $ \cH_{\lambda, \tilde\alpha, \tilde \beta}$ and $\chabl$
  are the quotients of 
  $T_H(H\ot V\ot H)$ by the ideals of filtered relations 
  $(\tilde{\P}\cup \P')$ and $(\P\cup \P')$, respectively, for 
         $$      
       \tilde{\P}=\{r-\tilde{\alpha}(r)-\tilde{\beta}(r):r\in \R\}
       \qquad\text{ and }\qquad 
       \P=\{r-\alpha(r)-\beta(r):r\in \R\}
       $$
       where 
       $\P'=\{r-\lambda(r):r\in \R'\}$
       (see~\cref{eqn:Hlab}). 
     To see that $ \cH_{\lambda, \tilde\alpha, \tilde \beta} =
       \chabl$, 
       we use the right $H$-module structure on $H\ot V \ot H$ and 
    argue that 
       ${\tilde\P}$ lies in the $k\text{-span}$ of $
     \{{\P}, 
     \P'\cdot H\}$
     whereas  ${\P}$ lies in $k\text{-span}
     \{\tilde{\P}, 
     \P'\cdot H\}$.

     For $r$ in $\R$, abbreviate $x=\tilde{\alpha}(r)$ and
     $x'=\gamma'(x)$.  One may verify that
     $$\begin{aligned}
       r-\tilde{\alpha}(r)-\tilde{\beta}(r)
             & \ = \ r
       -\gamma(x)
       +\gamma(x) 
       -\tilde{\beta}(r)-\lambda(x')+\lambda(x') -x
       \\
       & \ = \ r-\alpha(r) +\gamma(x) 
       -{\beta}(r)+\lambda(x') -x
       \\
       &\ = \ r-\alpha(r) 
       -{\beta}(r)+\gamma(x)-x+\lambda(x')
       \\
       & \ = \ r-\alpha(r) 
       -{\beta}(r)-\gamma'(x)+\lambda(x')
       \ = \ (r-\alpha(r) 
       -{\beta}(r))-(x'-\lambda(x') )\, 
     \end{aligned}
     $$
       which lies in the $k\text{-span}
     \{{\P}, 
     \P'\cdot H\}$.
     Vice versa, this shows
     ${\P}$ lies in $k\text{-span}
     \{\tilde{\P}, 
     \P'\cdot H\}$. 
    \end{proof}
    


\begin{thebibliography}{999}



  







\bibitem{Braverman-Gaitsgory}
  A.\ Braverman and D.\ Gaitsgory,
``Poincar\'e-Birkhoff-Witt Theorem for quadratic algebras of Koszul type,"
J.\ Algebra 181 (1996), 315--328. 







\bibitem{CBH98}
W.~Crawley-Boevey and M.~P.~Holland, ``Noncommutative deformations of
Kleinian singularities," Duke Math.~J.~22 (1998), no.~3, 605--635. 

\bibitem{Drinfeld} V.\ G.\ Drinfeld, ``Degenerate affine Hecke algebras
and Yangians,'' Funct.\ Anal.\ Appl.\ 20 (1986), 58--60. 

\bibitem{EGG05}
P.~Etingof, W.~L.~Gan, and V.~Ginzburg,
``Continuous Hecke algebras," Transform.~Groups 10 (2005), no.~3--4,
423--447. 

\bibitem{EG} P.\ Etingof and V.\ Ginzburg, 
``Symplectic reflection algebras, Calogero-Moser space, and deformed 
Harish-Chandra homomorphism,'' Invent.\ Math.\ 147 (2002), no.\ 2, 243--348. 





\bibitem{Flake19}
J.~Flake, ``Barbasch-Sahi algebras and Dirac cohomology,"
Trans.~Amer.~Math.~Soc.~371 (2019), no.~10, 6883--6902. 

\bibitem{FlakeSahi}
J.~Flake and S.~Sahi,
``Hopf-Hecke algebras, infinitesimal Cherednik algebras, and Dirac cohomology,''
Pure Appl.~Math.~Q.~17 (2021), no.~4, 1549--1597.

\bibitem{GK07}
W.~L.~Gan and A.~Khare, ``Quantized symplectic oscillator algebras
of rank one," J.~Algebra 310 (2007), no.~2, 671--707. 
  
  \bibitem{GerstenhaberSchack}
    M.\ Gerstenhaber and S.\ Schack,
``Algebraic cohomology and deformation theory,'' 
Deformation theory of algebras and structures and applications
      (Il Ciocco, 1986),
NATO Adv. Sci. Inst. Ser. C Math. Phys. Sci., 247,
Kluwer Acad. Publ., Dordrecht (1988), 11--264.
  
\bibitem{GoodearlWarfield}
  Goodearl, K. R. and Warfield, Jr., R. B.,
{\em An introduction to noncommutative {N}oetherian rings},
 London Mathematical Society Student Texts,
 61, Second Edition,
Cambridge University Press, Cambridge, 2004.


 


\bibitem{GG}  J.\ A.\ Guccione and J.\ J.\ Guccione, 
``Hochschild homology of twisted tensor products,''
K-Theory 18 (1999), 363--400. 


  
\bibitem{Khare} A.\ Khare, ``Generalized nil-Coxeter algebras,
cocommutative algebras, and the PBW property,'' 
in Groups, rings, group rings, and Hopf algebras,
139--168, Contemp.\ Math., 688, Amer.\ Math.\ Soc., Providence, RI, 2017. 

\bibitem{KT10}
A.~Khare and A.~Tikaradze, ``Center and representations of infinitesimal
Hecke algebras of $sl_2$," Comm.~Alg.~38 (2010) no.~2, 405--439.

\bibitem{Lam1999}
T. Y.\ Lam,
{\em Lectures on modules and rings. (English summary)}
Graduate Texts in Mathematics, 189, Springer-Verlag, New York, 1999.

\bibitem{LSS}
G.~Letzter, S.~Sahi, and H.~Salmasian,
``Weyl algebras for quantum homogeneous spaces,"
J.~Algebra 655 (2024), 651--721.


\bibitem{Li2011}
  H.\ Li,
 {\em Gr\"obner bases in ring theory},
  World Scientific Publishing Company, Singapore, 2011.

\bibitem{Li}
  H.\ Li,
  ``The General PBW Property'', 
  Algebra Colloq.\ 14 : 4 (2007), 541-–554.
  

\bibitem{Lusztig88} G.\ Lusztig, ``Cuspidal local systems and graded
{H}ecke algebras. {I},'' Inst.\ Haute Etudes Sci.\ Publ.\ Math.\
67 (1988), 145--202. 

\bibitem{Lusztig89} G.\ Lusztig, ``Affine Hecke algebras and their graded
version,'' J.\ Amer.\ Math.\ Soc.\ 2 (1989), no.\ 3, 599--635. 





\bibitem{Montgomery1983}
  S.\ Montgomery,
  ``Von Neumann finiteness of tensor products of algebras",
Comm.\ Algebra 11(6) (1983), 595-610.






\bibitem{RamShepler} A.\ Ram and A.V.\ Shepler,
``Classification of graded Hecke algebras for complex reflection groups,''
Comment.\ Math.\ Helv.\ 78 (2003), 308--334.

\bibitem{Rogalski}
  D.\ Rogalski,
  ``Idealizer rings and noncommutative projective geometry,''
  J.\ Algebra 279 (2004), 791--809.
  
\bibitem{Shepherdson}
  J. C. Shepherdson,
  ``Inverses and Zero Divisors in Matrix Rings,''
 Proc.\ London Math. Soc. 3-1 (1951), 71–85.






\bibitem{SW-Koszul}
  A.V.\ Shepler and S.\ Witherspoon,
``Deforming group actions on Koszul algebras,''
Math.\ Research Lett.\ 25 (2018), no.\ 5, 1613--1644. 

\bibitem{SW-twisted} A.V.\ Shepler and S.\ Witherspoon,
``Resolutions for twisted tensor products,''
Pacific J.\ Math.\ 298 (2019), no.\ 2, 445--469. 


\bibitem{TTP-AWEZ}
  A.V.\ Shepler and S.\ Witherspoon,
``Twisted tensor products: Alexander-Whitney and Eilenberg-Zilber maps,"
arXiv:2411.04102. 

  



\bibitem{Tsy}
A.~Tsymbaliuk, 
``Infinitesimal Hecke algebras of ${\mathfrak{so}}_N$,"
J.~Pure Appl.~Algebra 219 (2015), 2046--2061. 


\bibitem{WW} C.\ Walton and S.\ Witherspoon,
``Poincar\'e-Birkhoff-Witt deformations of smash product algebras
from Hopf actions on Koszul algebras,''
Algebra Number Theory 8 (2014), no.\ 7, 1701--1731.

\bibitem{YZ} A.\ Yekutieli and J.~J.~Zhang, 
``Homological transcendence degree,"
Proc.~London Math.~Soc.~(3) 93 (2006), 105--137. 

\end{thebibliography}
\end{document}